  \renewcommand*\env@matrix[1][*\c@MaxMatrixCols c]{%
    \hskip -\arraycolsep
    \let\@ifnextchar\new@ifnextchar
  \array{#1}}
\newcommand{\II}{I}
\newcommand{\IIm}[1][m]{I_{(#1)}}
\newcommand{\SSign}[3][]{\Sigma_{#2}(#3; {#1})}
\newcommand{\SSig}[3][]{\Sigma_{#2}(#3)}
\newcommand{\SSigM}[2][]{\SSig[n]{#2}{\min\{k-1,n\}-1}}
\definecolor{forestgreen(traditional)}{rgb}{0.0, 0.27, 0.13}
\definecolor{forestgreen(web)}{rgb}{0.13, 0.55, 0.13}
\definecolor{airforceblue}{rgb}{0.36, 0.54, 0.66}
\newcommand{\set}[1]{\left\{#1\right\}}
\def\CC{{\mathbb C}}
\def\GG{{\mathbb G}}
\def\NN{{\mathbb N}}
\def\P{{\mathbb P}}
\def\PP{{\mathbb P}}
\newcommand{\SL}{\mathbf{SL}}
\newcommand{\YF}{\mathrm{YF}}
\newcommand{ \cupdot}{\mathbin{\mathaccent\cdot \cup}}
\DeclareMathOperator{\codim}{codim}
\DeclareMathOperator{\Gr}{Gr}
\DeclareMathOperator{\Hilb}{Hilb}
\DeclareMathOperator{\Hom}{Hom}
\DeclareMathOperator{\im}{im}
\DeclareMathOperator{\Sing}{Sing}
\DeclareMathOperator{\Symm}{Sym}
\DeclareMathOperator{\Spec}{Spec}
\DeclareMathOperator{\Vertex}{Vertex}
\DeclareMathOperator{\rank}{rank}
\newcommand \brank {\ensuremath{\underline{\mathrm{rank}}}}
\newcommand{\Sym}{{S}^{\bullet}}
\newcommand{\Sd}{{S}}
\def\ds{\displaystyle}
\newcommand \ti[1]{\textit{#1}}
\newcommand \trm[1]{\textrm{#1}}
\newcommand \mbf[1]{\mathbf{#1}}
\newtheorem{theorem}{Theorem}
\newtheorem*{theorem*}{Theorem}
\newtheorem{lemma}[theorem]{Lemma}
\newtheorem*{lemma*}{Lemma}
\newtheorem{proposition}[theorem]{Proposition}
\newtheorem{corollary}[theorem]{Corollary}
\theoremstyle{definition}
\theoremstyle{remark}
\newtheorem{remark}[theorem]{Remark}
\newtheorem{question}[theorem]{Question}
\newtheorem{example}[theorem]{Example}
\newcommand\TT{\mathbb{T}}
\newcommand\PN{\PP^{N}}
\newcommand\Pa{\PN}
\newcommand\Pb{\PP^{\beta}}
\newcommand\Pm{\PP^{m}}
\newcommand\Pn{\PP^{n}}
\newcommand{\lin}[1]{\langle\, #1 \,\rangle}
\newcommand\mo[2][\mathbf{t}]{\mathsf{mono}[#1]_{\leq #2}}
\newcommand\XXk[2]{(#1)^{#2}}
\newcommand\Pmk[1][k]{\XXk{\PP^{m}}{#1}}
\newcommand\Pnk{\XXk{\PP^{n}}{k}}
\newcommand\Zk[1][k]{\XXk{Z}{#1}}
\newcommand\Xk{\XXk{X}{k}}
\newcommand\Lk{\XXk{L}{k}}
\begin{document}

\title[Singular loci of higher secant of Veronese embeddings]{On the singular loci of higher secant varieties of Veronese embeddings}

\author[K. Furukawa]{Katsuhisa Furukawa}
\address{Katsuhisa Furukawa, Department of Mathematics, Faculty of Science, Josai University, Saitama, Japan}
\email{katu@josai.ac.jp}

\author[K. Han]{Kangjin Han}
\address{Kangjin Han, 
Department of Liberal Arts and Sciences,
Daegu-Gyeongbuk Institute of Science \& Technology (DGIST),
Daegu 42988,
Republic of Korea}
\email{kjhan@dgist.ac.kr}

\thanks{The first named author was supported by JSPS KAKENHI Grant Number 22K03236.
  The second named author was supported by a National Research Foundation of Korea (NRF) grant funded by the Korean government (MSIT, no. 2017R1E1A1A03070765 and no. 2021R1F1A104818611).}

\date{}

\begin{abstract}
  The $k$-th secant variety of a projective variety $X \subset \mathbb{P}^N$, denoted by $\sigma_k(X)$, is defined to be the closure of the union of $(k-1)$-planes spanned by $k$ points on $X$. In this paper, we examine the $k$-th secant variety $\sigma_k(v_d(\mathbb{P}^n)) \subset \mathbb{P}^N$ of the image of the $d$-uple Veronese embedding $v_d$ of $\mathbb{P}^n$ to $\mathbb{P}^N$ with $N=\binom{n+d}{d}-1$, and focus on the singular locus of $\sigma_k(v_d(\mathbb{P}^n))$, which is only known for $k\le3$.
  To study the singularity for arbitrary $k,d,n$, we define \emph{the $m$-subsecant locus} of $\sigma_k(v_d(\mathbb{P}^n))$ to be the union of $\sigma_k(v_d(\mathbb{P}^m))$ with any $m$-plane $\mathbb{P}^m \subset \mathbb{P}^n$. By investigating the projective geometry of moving embedded tangent spaces along subvarieties and using known results on the secant defectivity and the identifiability of symmetric tensors, we determine whether the $m$-subsecant locus is contained in the singular locus of $\sigma_k(v_d(\mathbb{P}^n))$ or not.
  Depending on the value of $k$, these subsecant loci show an interesting trichotomy between generic smoothness, non-trivial singularity, and trivial singularity. In many cases, they can be used as a new source for the singularity of the $k$-th secant variety of $v_d(\mathbb{P}^n)$ other than the trivial one, the $(k-1)$-th secant variety of $v_d(\mathbb{P}^n)$. We also consider the case of the $4$-th secant variety of $v_d(\mathbb{P}^n)$ by applying main results and computing conormal space via a certain type of Young flattening. Finally, we present some generalizations and discussions for further developments.
\end{abstract}

\keywords{singular locus of higher secant varieties, the subsecant loci, Veronese embedding, symmetric tensors, secant defectivity, identifiability}
\subjclass[2010]{14M12, 14N05, 14J60, 15A21, 15A69}
\maketitle
\tableofcontents \setcounter{page}{1}

\section{Introduction}\label{intro}

Throughout the paper, we work over $\CC$, the field of complex numbers. Let $X \subset\PP^N$ be an embedded projective variety. The \ti{$k$-th secant variety} of $X$ is defined as:
\begin{gather}\label{eq:def-k-secant}
  \sigma_k(X)=\overline{\bigcup_{x_1,\dots,x_k\in X}\lin{x_1,\dots,x_k}} \subset \PP^N,
\end{gather}
where $\lin{x_1,\dots,x_k}  \subset \PN$ denotes the linear span of the points $x_1,\dots,x_k$ and the overline means the Zariski closure. In particular, $\sigma_1(X)=X$ and $\sigma_2(X)$ is often simply called the \ti{secant} or \ti{secant line} variety of $X$ in the literature. 

The construction of secant varieties (or more generally, \ti{join} construction of subvarieties) is not only one of the most famous methods in classical algebraic geometry, but also a very popular subject in recent years, especially in connection with fields of research such as tensor rank and decomposition, algebraic statistics, data science, geometric complexity theory and so on (see \cite{La,LaGCT} for more details).

Despite of a rather long history and the popularity, most of the fundamental questions on the higher secant varieties $\sigma_k(X)$ still remain open even for many well-known base varieties $X$. For instance, one can ask the secant defectivity question, which concerns the dimension of $\sigma_k(X)$. We say that $\sigma_k(X)$ is \emph{secant defective} (or simply \emph{defective}) if the dimension of $\sigma_k(X)$ is less than the expected one, $\min\{N,k\cdot \dim(X)+k-1\}$.
It is classically known that higher secant varieties of curves are non-defective (e.g. \cite[Corollary~1.2.3]{Rus}).
Due to the famous theorem of Alexander-Hirschowitz \cite{AH}, we know the dimensions of higher secant varieties of all Veronese varieties.
In other research, there are only a few cases where the dimension theorem for $\sigma_k(X)$ is fully determined
(see \cite{Rus}, \cite{Zak} for references).
Questions about defining equations of $k$-th secant varieties have also only been answered for very small $k$'s of a few cases and seem still far from understanding the essence of the sources for the equations (see also \cite[Chapter 5]{La} for a reference).

In this paper, we concentrate on the case of $X=v_d(\Pn)  \subset \PN$,
the Veronese variety,
which is the image of the Veronese embedding
$v_d: \Pn \hookrightarrow \PN$ with $N={n+d\choose d}-1$.
In particular, we study the singular locus of $\sigma_k(v_d(\Pn))$, an arbitrary higher secant variety of the Veronese variety.

The knowledge on singularities of higher secant varieties is fundamental and very important for its own sake in the study of algebraic geometry and also can be useful for problems in applications. For example, it can be used as a key condition to establish the \emph{identifiability} of structured tensors (e.g. the introduction in \cite{COV} and references therein).

For any irreducible variety $X \subset\PP^N$, it is classically well-known that 
\begin{equation}\label{basic_sing_containment}
\sigma_{k-1}(X) \subset \Sing(\sigma_k(X))
\end{equation} 
unless $\sigma_k(X)$ fills up the whole linear span $\lin{X}$ (see \cite[Proposition 1.2.2]{Rus}). In the paper, we say that a point $p\in\sigma_k(X)$ is a \ti{non-trivial} singular point if $p\notin \sigma_{k-1}(X)$ and $\sigma_k(X)$ is singular at $p$, while the points belonging to $\sigma_{k-1}(X)$ are called \ti{trivial} singular points.

There are some known results on the singular loci of $k$-th secant varieties $\sigma_k(X)$, mostly for very small $k$'s. The equality in (\ref{basic_sing_containment}) holds for determinantal varieties defined by minors of a generic matrix. It is completely described for the $2$-nd secant variety of the Segre product of projective spaces in \cite{MOZ}. It has recently been generalized to the case of $\sigma_2(X)$, where $X$ is a Segre-Veronese embedding by \cite{AMZ} and  $X$ is a Grassmannian by \cite{GS}. For the $3$-rd secant variety of the Grassmannian $\mathbb{G}(2,6)$, the dimension and some other properties of the singularity have been studied in \cite{AOP2}.

For the case of Veronese varieties, it is classical that $\Sing(\sigma_k(v_d(\Pn)))=\sigma_{k-1}(v_d(\Pn))$ holds both for the binary case (i.e., $n=1$) and for the case of quadratic forms (i.e., $d=2$) (see e.g. \cite[Chapter 1]{IK}). For $k=2$, it was proved in \cite{Kan} that the above equality holds also for any $d,n$. In these cases, the $k$-th secant variety has only the trivial singularity.
For $k=3$, the singular locus was completely determined by the second author in \cite{H18}; in particular, the non-trivial singularity occur if and only if $d=4$ and $n\ge3$. 

In the present paper, we explore the singular locus of any higher secant variety of the Veronese variety, and introduce a new main origin for the singularity other than the trivial singularity. We call this the `subsecant locus'.
As in our main results, these loci show an interesting \ti{trichotomy} phenomenon among generic smoothness, non-trivial singularity, and trivial singularity.

For any given point $p\in\sigma_k(v_d(\Pn))$, there exists an $m$-plane (i.e., $m$-dimensional linear subvariety) $\Pm$ of $\Pn$ with $1\le m\le k-1$ such that $p\in\sigma_k(v_d(\Pm))$;
it immediately follows for a general $p$, and even if $p$ is in the boundary of the closure in (\ref{eq:def-k-secant}), it is also true by considering  $(1,d-1)$-symmetric flattening (see \S\ref{sect_m=1} for details).
So, from now on, we say that $\sigma_k(v_d(\Pm)) \subset\sigma_k(v_d(\Pn))$ is
an \ti{$m$-subsecant variety} of $\sigma_k(v_d(\Pn))$ if $m< k-1$ and $m<n$,
and simply call it a \ti{subsecant variety} in case there is no confusion.
We also define \ti{the $m$-subsecant locus of $\sigma_k(v_d(\Pn))$},
\begin{gather}\label{eq:def-m-subsec}
  \SSig[n]{k,d}{m} \,\trm{~or~}\, \SSign[\Pn]{k,d}{m} =
  \bigcup_{\Pm \subset\Pn} \sigma_k(v_d(\Pm)).
\end{gather}
It naturally forms an increasing sequence of loci in the $k$-th secant variety as
\begin{equation*}
  \SSig[n]{k,d}{1} \subset\SSig[n]{k,d}{2} \subset\cdots \subset\SSig[n]{k,d}{\min\{k-1,n\}-1} \subset\sigma_k(v_d(\Pn))=\SSig[n]{k,d}{\min\{k-1,n\}}.
\end{equation*}
In particular, $\SSig[n]{k,d}{\min\{k-1,n\}-1}$
is the union of all proper subsecant varieties,
which we call the \ti{maximum subsecant locus} of the given $k$-th secant variety $\sigma_k(v_d(\Pn))$.
Any point of $\sigma_k(v_d(\Pn))$ outside the maximum subsecant locus and the previous secant variety $\sigma_{k-1}(v_d(\Pn))$ is called a point of the \ti{full-secant locus}.
Note that for $k=3$ of \cite{H18},
when $d=4$ and $n\ge3$,
the singular locus of $\sigma_3(v_d(\Pn))$ is given as the maximum subsecant locus $\SSig[n]{3,d}{1}$,
which is the only case where the singularity pattern of $3$-rd secant varieties becomes exceptional,
while for all the other $d, n$ the singularity is just the trivial one, $\sigma_2(v_d(\Pn))$ (see Remark~\ref{rem_main_result} (a)). Most of the previously known results on singular loci of secant varieties can be understood in this viewpoint (see Remark~\ref{remk_for_next_project}).

Thus, a basic question for our concern could be stated as follows:
For given $k,d,m,n$,
\begin{center}
  When is $\sigma_k(v_d(\P^n))$ singular at points of an $m$-subsecant locus $\SSig[n]{k,d}{m}$?
\end{center}

In principle, it is somewhat straightforward (despite the computational complexity) to check the singularity, once a complete set of equations for a higher secant variety is attained. But, as mentioned above, not much is known about the defining equations and they seem quite far from being fully understood at this moment, even for the Veronese case (see \cite{LO, FH3} for the state of the art).
Due to the lack of knowledge on the equations for the higher secant variety, it is very difficult to determine the singular locus in general. 

In this paper, without further understanding on the equations (!), we introduce a geometric way to pursue it for this kind of problems, which is based on a careful study on the behavior of embedded tangent spaces moving along a locus in the Veronese variety . For the case $m=1$, we first present the following result for the $1$-subsecant locus $\SSig[n]{k,d}{1} = \bigcup_{\PP^1 \subset \Pn} \sigma_k(v_d(\PP^1))$ of $\sigma_k(v_d(\P^n))$, which is a generalization of \cite[Theorem~2.1]{H18} (i.e., $k=3$ case) to any higher $k$-th secant varieties of Veronese varieties.

\begin{theorem}\label{thm_m1}
  Let $v_d:\P^n \rightarrow \P^N$
  be the $d$-uple Veronese embedding with $n\ge2$, $d\ge3$, and $N={n+d\choose d}-1$.
  Assume $k\ge3$.
  For $(k,d,n)\neq(3,4,2)$, the following holds:
\begin{enumerate}
\item[(i)] If $k\le\frac{d+1}{2}$, then $\sigma_k(v_d(\P^n))$ is smooth at every point in $\SSig[n]{k,d}{1} \setminus\sigma_{k-1}(v_d(\P^n))$.
\item[(ii)] If $k=\frac{d+2}{2}$ , then $\SSig[n]{k,d}{1} \subset\Sing(\sigma_k(v_d(\P^n)))$ but $\SSig[n]{k,d}{1} \not\subset\sigma_{k-1}(v_d(\P^n))$ (i.e., non-trivial singularity). This case occurs only if $d$ is even.
\item[(iii)] If $k\ge\frac{d+3}{2}$, then $\SSig[n]{k,d}{1} \subset\sigma_{k-1}(v_d(\P^n))$ (i.e., trivial singularity, unless $\sigma_{k}(v_d(\mathbb{P}^n))=\mathbb{P}^N$).

\end{enumerate}
For $(k,d,n)=(3,4,2)$, it holds that
\begin{enumerate}
\item[(iv)] $\sigma_3(v_4(\PP^2))$ is smooth at every point in $\SSig[2]{3,4}{1} \setminus\sigma_{2}(v_4(\PP^2))$.
\end{enumerate} 
\end{theorem} 

Concerning singular points of arbitrary $\sigma_k(v_d(\Pn))$ originated from subsecant loci, we prove the following general theorems on the $m$-subsecant locus $\SSig[n]{k,d}{m}$ with $m\ge2$ and $k\ge4$ as the main results.

\begin{theorem}\label{general_m}
  Let $v_d:\P^n \rightarrow \P^N$
  be the $d$-uple Veronese embedding
  with $n\ge3$, $d\ge3$, and $N={n+d\choose d}-1$.
  Let $k \geq 4$ and let $\Pm  \subset \Pn$ be an $m$-plane with $2\leq m < \min\{k-1,n\}$.
  Assume that $(d,m) \notin \mathcal{E} = \{(3,3),(3,4),(3,5),(4,2),(4,3),(4,4),(5,2),(6,2)\}$.

  For $(k,d,n)\neq(4,3,3)$, setting
  $\mu = \Bigr\lceil \frac{\binom{m+d}{m}}{m+1} \Bigl\rceil$,
  we have: 
\begin{enumerate}
\item[(i)] If $k < \mu$, then $\sigma_k(v_d(\P^n))$ is smooth at a general point in $\SSig[n]{k,d}{m} \setminus\sigma_{k-1}(v_d(\P^n))$. 
\item[(ii)] If $k = \mu$, then $\SSig[n]{k,d}{m} \subset\Sing(\sigma_k(v_d(\P^n)))$ but $\SSig[n]{k,d}{m} \not\subset\sigma_{k-1}(v_d(\P^n))$ (i.e., non-trivial singularity).
\item[(iii)] If $k>\mu$, then $\SSig[n]{k,d}{m} \subset\sigma_{k-1}(v_d(\P^n))$
  (i.e., trivial singularity, unless $\sigma_{k}(v_d(\mathbb{P}^n))=\mathbb{P}^N$).
\end{enumerate}
For $(k,d,n)=(4,3,3)$, it holds that
\begin{enumerate}
\item[(iv)] $\sigma_4(v_3(\P^3))$ is smooth at every point in $\SSig{4,3}{2} \setminus\sigma_{3}(v_3(\P^3))$.
\end{enumerate} 
\end{theorem}

\begin{theorem}\label{general_m2} In the same situation as Theorem~\ref{general_m}, for
  \begin{gather*}
    (d,m) \in \mathcal{E} = \{(3,3),(3,4),(3,5),(4,2),(4,3),(4,4),(5,2),(6,2)\},
  \end{gather*}
  if $k$ is in one of the ranges named (i), (ii), (iii)
  in Table \ref{singTable1} below,
  then the following property corresponding to the name of the range holds.

  \begin{enumerate}
  \item[(i)] $\sigma_k(v_d(\P^n))$ is smooth at a general point in $\SSig[n]{k,d}{m} \setminus\sigma_{k-1}(v_d(\P^n))$.
  \item[(ii)] $\SSig[n]{k,d}{m} \subset\Sing(\sigma_k(v_d(\P^n)))$ but $\SSig[n]{k,d}{m} \not\subset\sigma_{k-1}(v_d(\P^n))$.
  \item[(iii)] $\SSig[n]{k,d}{m} \subset\sigma_{k-1}(v_d(\P^n))$.
  \end{enumerate}
\end{theorem} 

\begin{table}[hbt]
  \begin{tabular}{|l|c|c|c|c|}
    \hline
    $(d,m)$ & $\frac{\binom{m+d}{m}}{m+1}$ & (i) & (ii) & (iii)\\ \hline
    $(3,3)$ & $5$            & $k\le 5$ & None & $6\le k$\\ \hline 
    $(3,4)$ & $7$            & $k\le 6$ & $k=7,8$ & $9\le k$\\ \hline 
    $(3,5)$ & ${28}/{3}$ & $k\le 8$ & $k=9, 10$ & $11\le k$\\ \hline 
    $(4,2)$ & $5$            & $k\le 4$ & $k=5,6$ & $7\le k$\\ \hline 
    $(4,3)$ & ${35}/{4}$ & $k\le 7$ & $k=8,9,10$ & $11\le k$\\ \hline 
    $(4,4)$ & $14$           & $k\le 13$ & $k=14, 15$ & $16\le k$\\ \hline 
    $(5,2)$ & $7$            & $k\le 7$ & None & $8\le k$\\ \hline 
    $(6,2)$ & ${28}/{3}$ & $k\le 8$ & $k=9, 10$ & $11\le k$\\ \hline 
  \end{tabular}

  \caption{(Non-)singularity of $\SSig[n]{k,d}{m}$ in Theorem \ref{general_m2}}
  \label{singTable1}
\end{table}

To understand the reason for considering the conditions that $(d,m)$ is in $\mathcal{E}$ or not,
we discuss the secant defectivity and the generic identifiability
of an $m$-subsecant variety $\sigma_k(v_d(\Pm)) \subset \sigma_k(v_d(\Pn))$.
Set $\Pb = \lin{v_d(\Pm)} \subset \PP^N$ with $\beta = \binom{m+d}{m}-1$.
where $\sigma_k(v_d(\Pm)) \subset \Pb$ is of dimension $\leq km+k-1$.

From \cite{AH},
the codimension of $\sigma_k(v_d(\P^m))$ in $\Pb$ is greater than
$\max\{\beta-(km+k-1),0\}$
(i.e., $\sigma_k(v_d(\P^m))$ does not fill $\Pb$ and is \emph{secant defective})
if and only if $d=2$ and $2 \leq k \leq m$, or
$(k,d,m) = (7,3,4),(5,4,2),(9,4,3),(14,4,4)$;
indeed, in the latter case,
the four $k$-th secant varieties $\sigma_k(v_d(\P^m))$'s are hypersurfaces of $\Pb$.
Except these defective cases,
we have $\sigma_k(v_d(\P^m))=\Pb$ if
$km+k-1 \geq \beta$, or equivalently $k \geq {\binom{m+d}{m}}/({m+1})$.

We say that a point $a \in \Pb$ is \emph{$k$-identifiable}
if there is a \emph{unique}
$k$-tuple of points
$x_1, \dots, x_k \in v_d(\Pm)$ such that $a \in \lin{x_1, \dots, x_k}$.
We also say that
$\sigma_k(v_d(\P^m))$
is \emph{generically identifiable} if 
a general point $a\in\sigma_k(v_d(\P^m))$ is $k$-identifiable.
From \cite[Theorem~1]{GM},
a general point $a \in \Pb$ is $k$-identifiable
\big(or $\sigma_k(v_d(\P^m))$ is generically identifiable in the case of $\sigma_k(v_d(\P^m)) = \Pb$\big)
if and only if $m = 1$ and $k=(d+1)/2$, or $(k,d,m) = (5,3,3),(7,5,2)$.
From \cite[Theorem~1.1]{COV2},
in the case when $\sigma_k(v_d(\P^m)) \subsetneq \Pb$ is not secant defective,
$\sigma_k(v_d(\P^m))$ is \emph{not} generically identifiable
if and only if $(k,d,m)=(9,3,5),(8,4,3),(9,6,2)$.

\begin{remark}\label{rem_main_result}
  We make some remarks on the theorems above.
\begin{enumerate}[(a)]

\item In the case of $k=3$ and $n \geq 3$, 
  the condition of (ii) in Theorem~\ref{thm_m1} holds if and only if $d=4$,
  and then 
  $\Sing(\sigma_3(v_4(\Pn))$ is equal to
  $\bigcup_{\PP^1  \subset \Pn} \lin{v_4(\PP^1)} = \SSig[n]{3,4}{1}$
  since
  $\sigma_3(v_4(\PP^1)) = \lin{v_4(\PP^1)}$ and
  $\sigma_2(v_4(\Pn))  \subset \bigcup_{\PP^1  \subset \Pn} \lin{v_4(\PP^1)}$ (see also Corollary \ref{coro_for_cone_sing}).
  This gives a geometric description of
  the only exceptional case for the singular loci of the $3$-rd secant varieties
  in \cite{H18}.

\item (i) of Theorem~\ref{thm_m1} is stronger than (i) of Theorems \ref{general_m} and \ref{general_m2}, since it claims smoothness for `every point' in the $m$-subsecant locus $\SSig[n]{k,d}{m}$ outside $\sigma_{k-1}(v_d(\P^n))$.
  The `general point' condition of (i) in Theorems \ref{general_m} and \ref{general_m2} cannot be deleted (see Example~\ref{sing_special_pt}). 
  We also note that, the ranges of $k$ of (i) and (ii) are slightly different between Theorems~\ref{thm_m1} and \ref{general_m}.
  If $m=1$ and $d$ is even, then the case $k = \lceil{\binom{m+d}{m}}/({m+1})\rceil = (d+2)/2$
  is (ii) of Theorem~\ref{thm_m1},
  which is similar to that of Theorem~\ref{general_m};
  for this $k$, $\sigma_k(v_d(\PP^1))$ is \emph{not} generically identifiable.
  However, if $d$ is odd,
  then the case $k = {\binom{m+d}{m}}/({m+1}) = (d+1)/2$ belongs to (i) of Theorem~\ref{thm_m1}.
  
\item
  (i) in Theorems~\ref{thm_m1}, \ref{general_m}, and \ref{general_m2}
  corresponds to the {$k$-identifiable} case of a general point $a\in\sigma_k(v_d(\P^m))$.
  From the viewpoint of the secant fiber in the incidence (\ref{main_incidence}) in \S\ref{sect_prep},
  it means that the fiber $p^{-1}(a)$ under the projection $p$ consists of only one element up to permuting $x_i$'s. On the other hand, (ii) of Theorems~\ref{thm_m1}, \ref{general_m}, and \ref{general_m2} corresponds to the case of generic {non-identifiability} of the subsecant variety $\sigma_k(v_d(\P^m))$ with the situation of $\sigma_{k-1}(v_d(\P^m)) \subsetneq \Pb=\lin{v_d(\P^m)}$,
  except $(k,d,m,n) = (3,4,1,2), (4,3,2,3)$.
  This non-identifiability of a general $a \in \sigma_k(v_d(\P^m))$ occurs if $\dim p^{-1}(a) > 0$, or if $\dim p^{-1}(a)=0$ and $\# p^{-1}(a)\ge2$ (modulo permutation).
  For $m = 1$, only the former occurs in the case $k = (d+2)/2$.
  For $m \geq 2$,
  the former corresponds to
  the case where $k$ is the ceiling of ${\binom{m+d}{m}}/({m+1}) \notin \NN$ with $\sigma_k(v_d(\P^m))=\Pb$,
  to the case where $\sigma_k(v_d(\P^m)) \subsetneq \Pb$ is secant defective,
  or to the case
  where $\sigma_{k-1}(v_d(\P^m)) \subsetneq \Pb$ is secant defective
  (i.e., $(k,d,m) = (8, 3, 4), (6, 4, 2), (10, 4, 3), (15, 4, 4)$),
  and the latter corresponds to
  the case where $k$ is the number ${\binom{m+d}{m}}/({m+1}) \in \NN$ with $\sigma_k(v_d(\P^m))=\Pb$ except $(k,d,m) = (5,3,3),(7,5,2)$,
  or to the case $(k,d,m)=(9,3,5),(8,4,3),(9,6,2)$.

\item $(k,d,m,n) = (3,4,1,2), (4,3,2,3)$ (i.e., (iv) of Theorems~\ref{thm_m1} and ~\ref{general_m}),
  \ti{a posteriori}, turn out to be the \ti{only} two exceptional cases which do not follow this trichotomy pattern;
  in other words,
  though $k$ belongs to the range of (ii)
  and the generic {non-identifiability} of $\sigma_k(v_d(\P^m))$ holds,
  $\SSig{k,d}{m}$ does not provide non-trivial singular points
  (see also Remark~\ref{m=1_exception_why}).
  Indeed, in {\cite{FH3}} we show
  that if $(k,d,n)=(3,4,2),(4,3,3)$, then $\sigma_k(v_d(\Pn))$ is a \emph{del Pezzo $k$-th secant variety}, that is, a $k$-th secant variety of \emph{next-to-minimal degree}.
  In this sense, these two cases also belong to a special class
  with respect to the degrees of higher secant varieties. (For basic definitions and results on such varieties, see \cite{CR}, \cite{CK}.)
\end{enumerate}
\end{remark}

As an application of our main results for $\SSig[n]{4,d}{2}$ and $\SSig[n]{4,d}{1}$, we obtain the following result on the singularity of the \ti{fourth} secant variety of any Veronese variety.

\begin{theorem}[Singular locus for $\sigma_4(v_d(\P^n))$]\label{sing_4th_Vero}
  Let $v_d:\P^n \rightarrow \P^N$
  be the $d$-uple Veronese embedding
  with $n\ge3$, $d\ge3$, and $N={n+d\choose d}-1$.
  Then the followings hold.
\begin{enumerate}
\item[(i)] $\sigma_4(v_d(\P^n))$ is smooth at every point outside $\SSig[n]{4,d}{2} \cup\sigma_{3}(v_d(\P^n))$. 
\item[(ii)] If $d\ge4$, then a general point in $\SSig[n]{4,d}{2} \setminus\sigma_{3}(v_d(\P^n))$ is also a smooth point of $\sigma_4(v_d(\P^n))$. When $d=3$ and $n=3$, all points in $\SSig[3]{4,d}{2} \setminus\sigma_{3}(v_d(\P^n))$ are smooth. If $d=3$ and $n\ge4$, then $\SSig[n]{4,d}{2} \subset\Sing(\sigma_4(v_d(\P^n)))$ but $\SSig[n]{4,d}{2} \not\subset\sigma_{3}(v_d(\P^n))$ (i.e., non-trivial singularity).
\item[(iii)]  For $d\ge 7$, all points in $\SSig[n]{4,d}{1} \setminus\sigma_{3}(v_d(\P^n)) \subset \sigma_4(v_d(\P^n))$ are smooth.
  When $d=6$, $\SSig[n]{4,d}{1} \subset\Sing(\sigma_4(v_d(\P^n)))$ but $\SSig[n]{4,d}{1} \not\subset\sigma_{3}(v_d(\P^n))$ (i.e., non-trivial singularity).
  When $d\le 5$, $\SSig[n]{4,d}{1} \subset\sigma_{3}(v_d(\P^n))$ (i.e., trivial singularity).
\end{enumerate}
\end{theorem} 

For a projective variety $X \subset \PN$,
we denote by $\Vertex(X)$ the set of vertices of $X$.
Then $X$ is a cone if and only if $\Vertex(X) \neq \emptyset$.

\begin{example}[Cases with a nice description]\label{eg_smallest_4thsec}
  The smallest case for the singular locus of $\sigma_4(v_d(\PP^n))$ beyond the classical results for $d=2$ or $n=1$ is $(d,n)=(3,2)$, but in this case there is nothing to check, because $\sigma_4(v_3(\PP^2))$ fills up the ambient space $\PP^9$ and then $\Sing(\sigma_4(v_3(\PP^2)))=\emptyset$. In the case $(d,n)=(3,3)$, by Theorem \ref{sing_4th_Vero} (i) and (ii), we have
\begin{equation*}
\Sing(\sigma_4(v_3(\PP^3)))=\sigma_3(v_3(\PP^3)).
\end{equation*}
For the case $(d,n)=(3,4)$, if $V$ denotes a $5$-dimensional $\CC$-vector space with $\PP^4 = \PP V$, Theorem~\ref{sing_4th_Vero} tells us that the singular locus of the $4$-th secant variety of $v_3(\PP V)$ in $\PP^{34}$ is precisely the locus of cubic hypersurfaces in $5$ variables which are \ti{cones with the vertex dimension $\ge1$} as follows:
\begin{multline*}
  \Sing(\sigma_4(v_3(\PP V)))
  =\sigma_3(v_3(\PP V)) \cup\SSign[\PP V]{4,3}{2} \\
  =\sigma_3(v_3(\PP V)) \cup\big\{\bigcup_{\PP^2 \subset\PP V}\sigma_4(v_3(\PP^2))\big\}
  =\bigcup_{\PP^2 \subset\PP V}\lin{v_3(\PP^2)} \\ 
  =\big\{f\in \PP S^3 V~|~\textrm{the cubic hypersurface $X \subset \PP V$ defined by $f$ is a cone with $\dim \Vertex(X)\ge1$}\big\},
  \end{multline*}
which is just the maximum subsecant locus $\SSign[\PP V]{4,3}{2}$, an irreducible $15$-dimensional locus in the $19$-dimensional variety $\sigma_4(v_3(\PP V))$. In the same argument, we can obtain 
\begin{equation*}
  \trm{$\Sing(\sigma_4(v_3(\PP^n)))=\SSign[\Pn]{4,3}{2}$ for any $n\ge4$.}
\end{equation*}
Such a simple description of the singular locus can be attained in a few more cases (see Corollary~\ref{coro_for_cone_sing} for details). 
\end{example}

The paper is structured as follows. In \S\ref{sect_prep}, as preparation, we first recall some preliminaries on $k$-th secant varieties and corresponding incidences.
Then, using projective techniques, such as Terracini's lemma, the trisecant lemma, descriptions of embedding tangent spaces, and tangential projections, we reveal several geometric properties of $m$-subsecant varieties in higher secant varieties of Veronese varieties, which are crucial for the proof of main theorems. In \S\ref{sect_m=1}, as an illustration of the whole picture and our main ideas, we treat the case $m=1$ and prove Theorem~\ref{thm_m1}. In \S\ref{sect_m>=2} we deal with the general case (i.e., $m\ge2$) and prove Theorem~\ref{general_m} and Theorem~\ref{general_m2} as generalizing the ideas used in the previous section. We would like to remark that this can be done because the dimension theorem \cite{AH} and the generic identifiability question \cite{COV2,GM} were settled for the case of Veronese varieties. In \S\ref{Sing4Vero} we focus on the singular locus of the $4$-th secant variety and prove Theorem~\ref{sing_4th_Vero} by dividing the case into two parts; `full-secant locus points (i.e., $m=3$)' treated in Theorem~\ref{sing_4th_Vero_gen} via Young flattening and conormal space computation and `the subsecant locus' by Corollary \ref{sing_4th_Vero_sub}. Finally, we make some generalizations and remarks on the material for further developments in \S\ref{sect_final_remark}.\\

\paragraph*{\textbf{Acknowledgements}} The authors would like to give thank to Korea Institute for Advanced Study (KIAS) for inviting them and giving a chance to start the project. We are grateful to Giorgio Ottaviani for a helpful comment on Example \ref{eg_smallest_4thsec} and to Luca Chiantini and Jarek Buczy{\'n}ski for related discussions as well. The first author also wishes to express his gratitude to Hajime Kaji and Hiromichi Takagi for useful discussions.
Finally, we would like to thank the anonymous referees very much for their careful reading, pointing out some error in the first version, and many suggestions which help us to improve the final exposition of the paper.

\section{Some geometric properties of subsecant varieties}\label{sect_prep}

\subsection{Projection from the incidence to the secant variety}

For a (reduced and irreducible) variety $X$, we denote $X\times\cdots\times X$, the (usual) product of $k$ copies of $X$, by $\Xk$.
We denote the $k$-fold symmetric product of $X$, $\Xk/S_n$, by $\Symm^k(X)$.

For the $d$-uple Veronese embedding $v_d: \Pn \rightarrow \PP^{N}$ with $N={n+d\choose d}-1$, we regard the incidence variety $\II = \IIm[n] \subset \PP^{N} \times \Pnk$
to be the Zariski closure of 
\begin{equation}\label{main_incidence}
  \II^0 = \IIm[n]^0 = \set{(a, {x_1', \dots, x_k'})
    \mid a \in \lin{x_1, \dots, x_k} \text{ and } \dim \lin{x_1, \dots, x_k} = k-1},
\end{equation}
where we write $x_i = v_d(x_i')$ for $x_i' \in \Pn$. 
Taking the first projection
$p : \II \rightarrow \PP^{N}$, we have $p(\II)= \sigma_k(v_d(\Pn))$ (see also \cite[Definition 1.1.3]{Rus}, \cite[Chapter I, \textsection{}1, Chapter V]{Zak}). For any $a\in\sigma_k(v_d(\Pn))$, $p^{-1}(a)$ is often called the \ti{secant fiber} of $a$. 
Note that $\II$ is invariant under permuting factors on $(\Pn)^k$ from the definition so that both $p$ and $q_i$ maps factor through $\PP^{N}\times\Symm^k(\Pn)$.

We also have $\dim \II = {nk + k - 1}$ by
considering general fibers of $q : \II \rightarrow \Pnk$. For each $1 \leq i \leq k$, let 
\[
  q_i : \II \rightarrow \Pnk \rightarrow \Pn
\]
be the composition of $q$ and the projection to the $i$-th factor $\Pnk \rightarrow \Pn$. 
Then the following commutative diagram is obtained:
\begin{equation*}
  \begin{tikzcd}
    & I \arrow[ld,"p"] \arrow[rd, "q"'] \arrow[rrd, shift left=0.5ex, "q_i"] \\
    \PN && \Pnk \arrow[r] & \Pn.
  \end{tikzcd}
\end{equation*}

\begin{remark}\label{rem:large-d-incidence:1}
  We have some remarks on the incidence variety $\II \subset \PP^{N} \times \Pnk$.
  \begin{enumerate}[(a)]
  \item $\II^0$ can be viewed as a $\PP^{k-1}$-bundle over a non-empty open subset $U$ of $\Pnk$, consisting of $k$-tuples of points with the expected spanning dimension, so that $\II$ is irreducible.
    Further, for $q^{-1}(U) = I \cap (\Pn \times U)$,  we have $\II^0=q^{-1}(U)$ since both are irreducible closed subsets in $\PP^{N}\times U$ having the same dimension. So, for any $(a, {x_1', \dots, x_k'})\in \II \setminus \II^0$ and for $x_i = v_d(x_i')$, it holds that $\dim \lin{x_1, \dots, x_k} < k-1$
    (in other words, there is no $(a, {x_1', \dots, x_k'})\in \II \setminus \II^0$ such that $\dim \lin{x_1, \dots, x_k}= k-1$ but $a\notin\lin{x_1, \dots, x_k}$).
    Finally, note that the Euclidean closure of $\II^0$ also coincides with $\II$ in this case (see e.g. \cite[Theorem~3.1.6.1]{LaGCT}).
    
  \item In the case of $\dim \sigma_k(v_d(\Pn)) = {nk + k - 1}$,
    it holds $p(\II  \setminus \II^0) \neq \sigma_k(v_d(\Pn))$,
    and then $p^{-1}(a) \subset I^0$ for general $a \in \sigma_k(v_d(\Pn))$.
    In addition, if $k \leq \binom{n+d-1}{n}$, then as setting
    \begin{gather*}
      D = \set{(x_1', \dots, x_k') \in \Pnk
        \mid \dim(\lin{v_{d-1}(x_1'), \dots, v_{d-1}(x_k')}) < k-1}\!,
    \end{gather*}
    we have $\dim (\II \cap (\PP^{N} \times D)) <\dim (\II)$, and hence
    $p(\II \cap (\PP^{N} \times D)) \neq \sigma_k(v_d(\Pn))$.
  \item (Alternative incidences for the $k$-th secant variety) In the literature, instead of $(X)^k$, other spaces such as the symmetric product $\Symm^k(X)$ (e.g. in \cite{COV2}), the Hilbert scheme of degree $k$ $0$-dimensional subschemes $\Hilb_k(X)$ (e.g. in \cite{BuBu}), and the Grassmannian $\mathbb{G}(k-1,N)$ (e.g. in \cite{Mel}) have also been used in the incidence to consider the $k$-th secant variety of $X \subset\PN$.  
  \end{enumerate}
\end{remark}

\begin{remark}\label{rem:def-y'} Let us fix an $m$-plane $L=\Pm  \subset \Pn$ and consider the $d$-uple Veronese embedding of $\Pm$, $v_d: L=\Pm \rightarrow \PP^{\beta}$ with $\beta={m+d\choose m}-1$.
  We often use the following notation.
  \begin{enumerate}[(a)]
  \item
    Let $\hat{L} \subset \Pn$ be any $(n-m-1)$-plane not intersecting $L$.
    Changing homogeneous coordinates
    $t_0, t_1, \dots, t_m, u_1, u_2, \dots, u_{m'}$ on $\Pn$
    with $m' = n-m$,
    we may assume that
    $L \subset \Pn$ is the zero set of $u_1, \dots, u_{m'}$ and
    $\hat{L} \subset \Pn$ is the zero set of $t_0, \dots, t_m$.
    For any $x_i' \in \Pn$, say
    \begin{gather*}
      x_i' = [x_{i, 0}':\dots : x_{i, m}': x_{i, m+1}' : \dots : x_{i,n}'] \in \Pn,
    \end{gather*}
    then we set $y_i' = [x_{i, 0}' : \dots : x_{i, m}': 0 : \dots : 0]$.
    Thus, $y_i'$ gives a point of the $m$-plane if $x_i'$ is not of the form $[0 : \dots : 0 :\ast:\dots:\ast]$. By abuse of notations, we denote by $y_i'$ both corresponding points in $\Pm$ and in $\Pn$.
    Further, as considering linear projections $\pi_1:\PP^n \dashrightarrow\PP^m$ from the center $\hat{L}$
    (eliminating the $u$-variables),
    and $\pi_2:\PP^N\dashrightarrow\Pb$
    (eliminating all the monomials of degree $d$ which involve $u$-variables),
    we have a natural commuting diagram as $v_d\circ\pi_1=\pi_2\circ v_d$
    \begin{equation}\label{eq:diag-PnPmPNPb}
      \begin{tikzcd}
        \Pn \arrow[r,hook,"v_d"] \arrow[d, dashed, "\pi_1"'] & \PP^{N} \arrow[d, dashed, "\pi_2"] \\
        \Pm \arrow[r,hook,"v_d"] & \PP^{\beta}
      \end{tikzcd}
      \raisebox{-2.2em}{\kern-1ex.}
    \end{equation}

    In particular, when $d=2$ and $n=m+1$ (i.e., $m'=1$), then $\pi_1$ is an (inner) projection from one point $a\in\Pn$ and $\pi_2$ corresponds to a tangential projection of $\P^N$ from $\TT_{v_{2}(a)} v_{2}(\Pn)$.
  \item On the affine open subset $\set{t_0 \neq 0}$,
    the $d$-uple Veronese embedding $v_d: \Pn \rightarrow \PP^{N}$
    is parameterized by monomials in $\mo[t,u]{d}$,
    where $\mo[t,u]{e}~\textrm{(resp. $\mo[t]{e}$)}$
    is defined to be the set of monomials in $\CC[t_1, \dots, t_m, u_1, \dots, u_{m'}]$ (resp. in $\CC[t_1, \dots, t_m]$) of degree $\leq e$ for an integer $e$.
  \end{enumerate}
\end{remark}

Let us study the behavior of some points in the boundary of $I$,
which belong to $I$ but do not belong to $I^0$, as follows.

\begin{lemma}\label{lem-I-I0}
  Let $L=\Pm  \subset \Pn$ be any $m$-plane with $m < n$, and consider $\Pb = \lin{v_d(L)}  \subset \P^N$.
  For $v_d: L=\Pm \hookrightarrow \PP^{\beta}$,
  let us take $\IIm  \subset \Pb \times \Pmk$ to be
  the closure of
  the set of points $(a, {x_1', \dots, x_k'}) \in \Pb \times \Pmk$
  such that
  $a \in \lin{x_1, \dots, x_k}$ and $\dim \lin{x_1, \dots, x_k} = k-1$
  (i.e., the incidence variety of the same kind as $\II = \IIm[n]$
  in (\ref{main_incidence})).
  In this setting, for any $(a, x_1', \dots, x_k') \in \II  \setminus \II^0$ with $a \in \Pb$,
  one of the following conditions holds:
  \begin{enumerate}[\quad (i)]
  \item
    $a \in p(\IIm  \setminus \IIm^0)$; or
  \item
    there is a subset $C  \subset \Pmk$ of dimension $>0$ such that $\set{a} \times C  \subset \IIm$.
  \end{enumerate}
\end{lemma}

\begin{proof}
  Let $\set{W_j}$ be the irreducible components of $\II  \setminus \II^0$.
  For each $j$, we take
  \begin{gather}\label{eq:axWj}
    (a_j,x_{j,1}', \dots, x_{j,k}') \in W_j \subset \PN \times \Pnk.
  \end{gather}
  Let $\hat{L} \subset \Pn$ be a general $(n-m-1)$-plane such that all $x_{j,i}' \notin \hat{L}$
  and $L \cap \hat{L} = \emptyset$.
  For the linear projection $\pi_1: \Pn \dashrightarrow \Pm$ from the center $\hat{L}$,
  we have a natural linear projection $\pi_2: \PN \dashrightarrow \Pb$ as in Remark~\ref{rem:def-y'} with the diagram~(\ref{eq:diag-PnPmPNPb}),
  and then define the projections
  \begin{gather*}
    \rho_1: \Pa \times \Pnk \dashrightarrow \Pa \times \Pmk,\quad 
    \rho_2: \Pa \times \Pmk \dashrightarrow \Pb \times \Pmk,
  \end{gather*}
  where $\overline{\rho_2(\rho_1(\II))} = \IIm$.
  Taking two Segre embeddings
  \begin{gather*}
    Seg_1: \Pa \times \Pnk \hookrightarrow \PP^{l_1},\quad 
    Seg_2: \Pa \times \Pmk \hookrightarrow \PP^{l_2},
  \end{gather*}
  we may regard $\rho_1$ as the restriction of a linear projection
  $\pi_R: \PP^{l_1} \dashrightarrow \PP^{l_2}$ whose center $R$
  is a certain linear subvariety of $\PP^{l_1}$.
  Let $\tilde{\PP}^{l_1}  \subset \PP^{l_1} \times \PP^{l_2}$ be
  the graph of $\pi_R$, which coincides with
  the blowing-up of $\PP^{l_1}$ with respect to $R$.
  Let $r_1: \tilde{\PP}^{l_1} \rightarrow \PP^{l_1}$ and
  $r_2: \tilde{\PP}^{l_1} \rightarrow \PP^{l_2}$ be projections.
  Then we have the following commutative diagram:
  \begin{equation}\label{eq:blup-proj-seg}
    \begin{tikzcd}
      \tilde{\PP}^{l_1} \arrow[d, "r_1"'] \arrow[dr, "r_2"] \\
      \PP^{l_1} \arrow[r, dashed, "\pi_R"] &\PP^{l_2} \\
      \PN \times \Pnk \arrow[u, hook, "Seg_1"] \arrow[r, dashed, "\rho_1"] &\PN \times \Pmk \arrow[u, hook, "Seg_2"'] \arrow[r, dashed, "\rho_2"] &\Pb \times \Pmk.
    \end{tikzcd}
  \end{equation}
  Under the embedding $\II  \subset \Pa \times \Pnk \hookrightarrow \PP^{l_1}$, taking
  $R \cap \II$ in $\PP^{l_1}$, we have $\codim(R \cap \II, \II) \geq 2$, as follows.
  For the center $\hat{L}$ of $\pi_1: \Pn \dashrightarrow \Pm$, we set
  \begin{gather*}
    \hat{L}(i) = \{\,(x_1', \dots, x_k') \in \Pnk \mid x_i' \in \hat{L}\,\}.
  \end{gather*}
  We consider the projection $\bar{q}: \PN \times \Pnk \rightarrow \Pnk$.
  Then a point $z \in \PN \times \Pnk$ belongs to the indeterminacy locus of $\rho_1$
  if and only if $\bar{q}(z) \in \hat{L}(i)$ for some $i$ with $1 \leq i \leq k$.
  Since $\pi_R|_I (= \pi_R \circ Seg_1|_{\II})$ coincides with $Seg_2 \circ \rho_1|_{\II}$,
  and since $R \cap \II$ is the indeterminacy locus of $\pi_R|_I$,
  we have
  \begin{gather*}
    R \cap \II = q^{-1}\bigl(\bigcup_{i=1}^k \hat{L}(i)\bigr),
  \end{gather*}
  where $q$ is equal to $\bar{q}|_{\II}: \II \rightarrow \Pnk$.
  In particular, since the dimension of the fibers of $q|_{\II^0}$ is constant,
  and since $\codim(\hat{L}(i), \Pnk) = m+1$,
  it follows that $\codim(R \cap \II^0, \II) = m+1$.
  Now, let $\set{Q_s}$ be the irreducible components of $R \cap \II$.
  Then $\codim(Q_s, \II) = m+1$ in the case $Q_s \cap \II^0 \neq \emptyset$,
  i.e., $Q_s \not\subset \II \setminus \II^0$.
  In order to consider the remaining case $Q_s \subset \II \setminus \II^0$,
  we use the irreducible components $\set{W_j}$ of $\II  \setminus \II^0$.
  For $(a_j,x_{j,1}', \dots, x_{j,k}') \in W_j$ given in (\ref{eq:axWj}),
  since $x_{j,i}' \notin \hat{L}$, it follows $W_j  \not\subset R$ for any $j$.
 
  If an irreducible component $Q_s$ of $R \cap \II$ satisfies $Q_s \subset \II \setminus \II^0$,
  then there is some $j$ such that $Q_s  \subset R \cap W_j  \subsetneq W_j  \subsetneq \II$,
  and then $\codim(Q_s, \II) \geq 2$.
  As a result, in any case,
  each irreducible component of $R \cap \II$ is of codimension $m+1$ or at least $\geq 2$,
  which implies $\codim(R \cap \II, \II) \geq 2$.
  \medskip

  Now, let
  $(a, x_1', \dots, x_k') \in \II  \setminus \II^0$
  satisfy
  $a \in \Pb$, where $\dim \lin{x_1, \dots, x_k} < k-1$ for $x_i = v_d(x_i')$
  as in Remark~\ref{rem:large-d-incidence:1} (a)
  (note that we do \emph{not} know $a \in \lin{x_1, \dots, x_k}$).
  We regard $(a, x_1', \dots, x_k')$ as a point of $\PP^{l_1}$ under
  the embedding $\II  \subset \PP^{l_1}$.
  \medskip

  If $(a, x_1', \dots, x_k') \notin R$,
  then $\rho_1(a, x_1', \dots, x_k') = (a, y_1', \dots, y_1')$ is determined in $\overline{\rho_1(\II)}  \subset \Pa \times \Pmk$,
  where $y_i' \in \Pm$ is the image of $x_i'$ under $\Pn \rightarrow \Pm$.
  Since $a \in \Pb$,
  \begin{gather*}
    \rho_2(\rho_1(a, x_1', \dots, x_k')) = (a, y_1', \dots, y_1')
  \end{gather*}
  is determined and is contained in $\IIm$.
  Then, $\dim \lin{y_1, \dots, y_k} < k-1$ for $y_i=v_d(y_i') \in \Pb$,
  which means $(a, y_1', \dots, y_1') \in \IIm  \setminus \IIm^0$.
  \medskip

  Assume $(a, x_1', \dots, x_k') \in R$. We consider
  the blowing-up $\tilde{\PP}^{l_1}$ of $\PP^{l_1}$ with respect to $R$,
  and the projections $r_1,r_2$ in the diagram~(\ref{eq:blup-proj-seg}).
  Note that, for the strict transformation
  $S  \subset \tilde{\PP}^{l_1}$ of $\Pa \times \Pnk  \subset \PP^{l_1}$,
  two composite morphisms
  \begin{gather*}
    S \xrightarrow{r_1|_S} \Pa \times \Pnk \rightarrow \Pa
    \,\text{ and }\,
    S \xrightarrow{r_2|_S} \Pa \times \Pmk \rightarrow \Pa
  \end{gather*}
  coincide since it holds on an open subset of $S$.
  Let $E  \subset \tilde{\PP}^{l_1}$ be the exceptional divisor, and let
  $\widetilde{\II}  \subset \tilde{\PP}^{l_1}$ be the strict transformation of
  $\II  \subset \PP^{l_1}$. Then $r_2(\widetilde{\II}) = \overline{\rho_1(\II)}$.
  Let
  \begin{gather*}
    E_1 = r_1^{-1}(a, x_1', \dots, x_k') \cap (E \cap \widetilde{\II}),
  \end{gather*}
  the fiber of $E \cap \widetilde{\II} \rightarrow R \cap \II$ at $(a, x_1', \dots, x_k')$.
  It follows from $\codim(R \cap \II, \II) \geq 2$
  that $\dim(E_1) \geq 1$. Since $r_1^{-1}(z) \simeq r_2(r_1^{-1}(z))  \subset \PP^{l_2}$ for each $z \in \PP^{l_1}$, we have $\dim(r_2(E_1)) \geq 1$.
  Since the image of $r_2(E_1)$ under $\Pa \times \Pmk \rightarrow \Pa$ is $\set{a}$,
  there is $C  \subset \Pmk$ of positive dimension such that
  $r_2(E_1) = \set{a} \times C  \subset \Pa \times \Pmk$.
  Since $a \in \Pb$ and
  \begin{gather*}
    \rho_2(r_2(E_1)) \subset \rho_2(r_2(\tilde{\II})) = \overline{\rho_2(\rho_1(\II))} = \IIm,
  \end{gather*}
  we have
  $\set{a} \times C = \rho_2(\set{a} \times C)  \subset \IIm$.
\end{proof}

\begin{lemma}[Non-triviality of subsecant varieties]\label{lem:sigma-k-1}
  For an $m$-plane $L = \Pm  \subset \Pn$, we have
  \begin{gather*}
    \sigma_{k}(v_d(L)) \cap \sigma_{k-1}(v_d(\Pn))  \subset \sigma_{k-1}(v_d(L)).
  \end{gather*}
  In particular, $\sigma_{k}(v_d(L))  \not\subseteq \sigma_{k-1}(v_d(\Pn))$ unless $\sigma_{k-1}(v_d(L))=\sigma_{k}(v_d(L))$.
\end{lemma}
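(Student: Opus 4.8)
The plan is to deduce the containment from the linear projection $\hat{\pi_2}\colon \A^{N+1}\to\A^{\beta+1}$ introduced in the proof of Lemma~\ref{lem-I-I0}, which is the linearization of $\pi_2\colon\Pa\dashrightarrow\Pb$ (eliminating every degree-$d$ monomial involving a $u$-variable). First I would record its two defining features. On one hand, $\hat{\pi_2}$ restricts to the identity on the affine cone over the linear span $\lin{\nu_d(L)}$, since that cone consists exactly of the forms supported on the $t$-monomials, which $\hat{\pi_2}$ fixes. On the other hand, the commuting square of Remark~\ref{rem:def-y'}(a) reads $\hat{\pi_2}\circ\hat\nu_d=\hat\nu_d\circ\hat{\pi_1}$ at the level of affine cones; concretely $\hat{\pi_2}$ carries the $d$-th power $\ell^d$ of a linear form $\ell$ to $(\hat{\pi_1}\ell)^d$, the $d$-th power of its image under the coordinate projection killing the $u$-variables. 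Thus $\hat{\pi_2}$ sends the affine cone over $\nu_d(\Pn)$ into the affine cone over $\nu_d(L)$.

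The heart of the argument is then the containment $\hat{\pi_2}\big(\widehat{\sigma_{k-1}(\nu_d(\Pn))}\big)\subseteq\widehat{\sigma_{k-1}(\nu_d(L))}$ of affine cones. On the dense constructible locus of honest decompositions, a point of $\widehat{\sigma_{k-1}(\nu_d(\Pn))}$ has the form $\sum_{j=1}^{k-1}\ell_j^{\,d}$ with $\ell_j$ linear forms in all variables, and by the previous paragraph $\hat{\pi_2}$ maps it to $\sum_{j=1}^{k-1}(\hat{\pi_1}\ell_j)^{d}$, a sum of at most $k-1$ $d$-th powers of linear forms in the $t$-variables only, hence a point of $\widehat{\sigma_{k-1}(\nu_d(L))}$. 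Since $\hat{\pi_2}$ is a morphism (in particular continuous) and the target cone is closed, passing to the Zariski closure propagates this inclusion to all of $\widehat{\sigma_{k-1}(\nu_d(\Pn))}$.

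To finish, let $a\in\sigma_{k}(\nu_d(L))\cap\sigma_{k-1}(\nu_d(\Pn))$ and pick a representative $\hat a$ in the affine cone. As $a\in\sigma_k(\nu_d(L))\subseteq\lin{\nu_d(L)}$, the first feature of $\hat{\pi_2}$ gives $\hat{\pi_2}(\hat a)=\hat a$; as $a\in\sigma_{k-1}(\nu_d(\Pn))$, the key containment gives $\hat a=\hat{\pi_2}(\hat a)\in\widehat{\sigma_{k-1}(\nu_d(L))}$, that is $a\in\sigma_{k-1}(\nu_d(L))$, which is the asserted inclusion. The ``in particular'' statement is then formal: if $\sigma_k(\nu_d(L))\subseteq\sigma_{k-1}(\nu_d(\Pn))$, then intersecting with $\sigma_k(\nu_d(L))$ and applying the inclusion yields $\sigma_k(\nu_d(L))\subseteq\sigma_{k-1}(\nu_d(L))$, whence equality since the reverse inclusion always holds.

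I expect the only delicate point to be the closure step in the middle paragraph: border-rank decompositions degenerate in the limit, so one must not try to track individual summands but instead exploit that $\hat{\pi_2}$ is a fixed continuous linear map and that the secant cone $\widehat{\sigma_{k-1}(\nu_d(L))}$ is closed, so that the inclusion verified on the dense honest-decomposition locus automatically extends to the whole cone. Everything else is bookkeeping with the projections already set up in Remark~\ref{rem:def-y'} and Lemma~\ref{lem-I-I0}.
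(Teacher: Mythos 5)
Your proof is correct, and it shares the paper's underlying idea --- the projection $\pi_2$ eliminating $u$-monomials sends power-sum decompositions over $\Pn$ to power-sum decompositions over $L$, while fixing points of $\lin{\nu_d(L)}$ --- but it handles the delicate closure step by a genuinely different mechanism. The paper stays in projective space, where $\pi_2$ is only a rational map: it connects $a$ to a general point $b_0$ of $\sigma_{k-1}(\nu_d(\Pn))$ by an irreducible curve $C$, notes that a general point of $C$ lies in a span of $k-1$ points of $\nu_d(\Pn)$ and hence projects into $\sigma_{k-1}(\nu_d(L))$, and then uses that $a$ lies in the domain of $\pi_2$ (being a point of $\Pb$, away from the center of projection) to conclude $a=\pi_2(a)\in\overline{\pi_2(C)}\subset\sigma_{k-1}(\nu_d(L))$. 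You instead pass to affine cones, where $\hat{\pi_2}$ is an everywhere-defined linear morphism, so the inclusion checked on the dense locus of honest decompositions propagates to the whole cone by pure topology ($\hat{\pi_2}(\overline{S})\subseteq\overline{\hat{\pi_2}(S)}$, and the target cone is closed). What your route buys: it avoids rational-map indeterminacy and the auxiliary choice of a curve entirely, and it actually proves a stronger statement --- that $\hat{\pi_2}$ carries the full cone $\widehat{\sigma_{k-1}(\nu_d(\Pn))}$ into $\widehat{\sigma_{k-1}(\nu_d(L))}$, not merely that points of the intersection with $\Pb$ land there. What the paper's route buys: it works directly with the projective projections already set up in Remark~\ref{rem:def-y'} and reused in Lemma~\ref{lem-I-I0}, so no separate verification that honest decompositions are dense in the affine cone (over $\CC$, absorbing scalars into $d$-th powers) is needed; that density fact is standard but is the one ingredient your argument must supply explicitly. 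Both proofs are complete; yours is arguably the cleaner formalization.
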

\begin{proof}
  Let $a \in \sigma_{k}(v_d(L)) \cap \sigma_{k-1}(v_d(\Pn))$ (note that $a$ can be in the boundary of $\sigma_{k-1}(v_d(\Pn))$).
  For a general point
  $b_0 \in \sigma_{k-1}(v_d(\Pn))$,
  we take an irreducible curve $C  \subset \sigma_{k-1}(v_d(\Pn))$
  such that
  $a, b_0 \in C$.
  Let $\pi_2 : \Pa \dashrightarrow \Pb$ be the linear projection in Remark~\ref{rem:def-y'} (a), and let $C' = \overline{\pi_2(C)}  \subset \Pb$.
  Since $a \in \Pb$, we have $a = \pi_2(a) \in C'$.

  Since $b_0$ is general,
  for a general point $b \in C$,
  we have
  $b \in \lin{x_1, \dots, x_{k-1}}$ with $x_1, \dots, x_{k-1} \in v_d(\Pn)$.
  Take $x_i' \in \Pn$ with $x_i = v_d(x_i')$.
  Setting $y_i = \pi_2(x_i)$, we have
  $y_i = v_d(y_i')$ with $y_i' \in L$ for each $i=1, \dots, k-1$
  as in Remark~\ref{rem:def-y'} (a).
  It follows that $\pi_2(b) \in \lin{y_1, \dots, y_{k-1}}  \subset \sigma_{k-1}(v_d(L))$.
  As a result,
  $a \in C'  \subset \sigma_{k-1}(v_d(L))$ and the assertion follows.
\end{proof}

\begin{remark}\label{brpp_and_closedness} We have some consequences of Lemma~\ref{lem:sigma-k-1}.
\begin{enumerate}[(a)]
\item(Border Rank Preserving Pair) For any $\Pm \subset\Pn$ and for any $k,d > 0$, by Lemma~\ref{lem:sigma-k-1}, we can derive
\begin{equation}\label{Pm_brpp}
\sigma_k(v_d(\Pn))\cap \langle v_d(\Pm) \rangle=\sigma_k(v_d(\Pm))
\end{equation}
as a set.
Since one inclusion is obvious, let us prove $\sigma_k(v_d(\Pn))\cap \langle v_d(\Pm) \rangle \subset\sigma_k(v_d(\Pm))$. Suppose that it does not hold. Then, there exists a form $f\in \sigma_k(v_d(\Pn))\cap \langle v_d(\Pm) \rangle$ with $f\in\sigma_{k_0}(v_d(\Pm)) \setminus\sigma_{k}(v_d(\Pm))$ for some $k_0>k$. Then, $f\in \sigma_{k_0}(v_d(\Pm))\cap\sigma_{k_0-1}(v_d(\Pn))$ so that $f\in\sigma_{k_0-1}(v_d(\Pm))$ by Lemma~\ref{lem:sigma-k-1}. Similarly, as repeating the same `descent' argument, we have $f\in\sigma_{k}(v_d(\Pm))$, which is a contradiction. Thus, the equality in (\ref{Pm_brpp}) is true.

In other words, for any $d$-th Veronese embedding $X=v_d(\Pn)$ and the linear span $L=\langle v_d(\Pm)\rangle$ of any sub-Veronese variety $v_d(\Pm)$, we showed that $(X,L)$ is a \ti{border rank preserving pair} for any $k, d>0$ in the terminology of \cite[Definition 5.7.3.1]{La}
(we would also like to note that \cite[Theorem~1.1]{BGL} can imply the same result for any $d\ge k$ in case of $\Pm \subset\Pn$).
\item(Every $\SSig[n]{k,d}{m}$ is closed) Recall that the \ti{symmetric subspace variety} $Sub_m(S^d V)$ (\cite[section 7.1.3]{La}) is defined as
  \begin{equation*}
\{f\in \P S^d V~|~\exists ~W \subset V,~\dim W=m+1~,~f\in \P S^d W\}~.
\end{equation*}
Let $n=\dim\P V$. For any $m\le n$, we have $Sub_m(S^d V)=\bigcup_{\Pm \subset\P V} \langle v_d(\Pm)\rangle$. Now, we show that $\SSig[n]{k,d}{m}$ is the intersection of the whole $k$-th secant $\sigma_k(v_d(\P V))$ and the symmetric subspace variety $Sub_m(S^d V)$ set-theoretically. See that
\begin{align*}
\sigma_k(v_d(\P V))\cap Sub_m(S^d V)&=\sigma_k(v_d(\P V))\cap \ds\bigcup_{\Pm \subset\P V} \langle v_d(\Pm)\rangle=\ds\bigcup_{\Pm \subset\P V} \bigg( \sigma_k(v_d(\P V))\cap \langle v_d(\Pm)\rangle \bigg)\\
                                    &=\ds\bigcup_{\Pm \subset\P V} \sigma_k(v_d(\Pm))~~\big(=\SSig[n]{k,d}{m}\big)\quad\trm{by (\ref{Pm_brpp})~.}
\end{align*}
Therefore, we obtain that $\SSig[n]{k,d}{m}=\sigma_k(v_d(\P V))\cap Sub_m(S^d V)$ as a set and in particular every $m$-subsecant $\SSig[n]{k,d}{m}$ is a \ti{Zariski-closed} locus in $\sigma_k(v_d(\P V))$. 
\end{enumerate}
\end{remark}

\subsection{General secant fiber of a subsecant variety, entry locus, and its Veronese image}
\label{sec:degen-subv-small}

For an $m$-dimensional projective variety $Z  \subset \Pb$,
we take another incidence variety $J  \subset \Pb \times \Zk$
to be the Zariski closure of 
\begin{equation}\label{incidence_on_Z}
  J^0 = \set{(a, x_1, \dots, x_k)
    \mid a \in \lin{x_1, \dots, x_k} \text{ and } \dim \lin{x_1, \dots, x_k} = k-1},
\end{equation}
with the projections
\[
  p : J \rightarrow \sigma_k(Z)  \subset \Pb, \quad q_i : J \rightarrow \Zk \rightarrow Z,
\]
where $\Zk \rightarrow Z$ is the projection to the $i$-th factor
for $1 \leq i \leq k$.
Then we have $\dim J = mk + k - 1$ for any $k\le \dim\langle Z\rangle+1$ by
considering general fibers of $J \rightarrow \Zk$.

For $a\in\sigma_k(Z)$, the scheme-theoretic image $q_i(p^{-1}(a))$ in $Z$ is called the ($k$-th) \ti{entry locus of $Z$ with respect to $a$} in the literature. It is known that for a \ti{general} $a\in\sigma_k(Z)$ the locus $q_i(p^{-1}(a))$ is equidimensional and moreover, if $Z$ is smooth and in the characteristic 0, then $p^{-1}(a)$ is \ti{smooth} so that $q_i(p^{-1}(a))$ is reduced (see \cite[Definition 1.4.5]{Rus}).

Let $Z, X  \subset \PN$ be projective varieties of dimensions $m, n$.
Let $Z  \subset X$ and $Z  \subset \Pb$, where $\Pb$ is a $\beta$-plane of $\PN$ (i.e., $Z$ is degenerate in $\PN$). Now, a general point $a\in Z$ does \emph{not} have to be \ti{general} in $X$ any longer. If $\beta < km+k-1$, then the projection $p$ has positive dimensional fibers.

We begin with a consequence of Terracini's lemma in our setting and add two more lemmas concerning `the entry locus' $q_i(p^{-1}(a))$.

\begin{lemma}\label{lem:TTx-in-TTa}
  Assume that
  $\sigma_k(Z)  \not\subset \Sing(\sigma_k(X))$.
  Let $F$ be an irreducible component of $p^{-1}(a)$ for a general point $a \in \sigma_k(Z)$ in the incidence (\ref{incidence_on_Z}).
  Then, for a general point $x \in q_i(F)$ with $1\le i \le k$, we have
  \[
    \TT_x (X)  \subset \TT_a (\sigma_k(X)),
  \]
  where $\TT_x (X)  \subset \PN$ means the embedded tangent space to $X$ at $x$.
\end{lemma}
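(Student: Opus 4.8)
The plan is to derive the inclusion from the elementary half of Terracini's lemma, applied to the secant parametrisation of $X$ itself rather than of $Z$, using only that $a$ is a smooth point of $\sigma_k(X)$.

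First I would set up the genericity. By hypothesis $\sigma_k(Z)\not\subset\Sing(\sigma_k(X))$, so a general $a\in\sigma_k(Z)$ lies in $\Sm(\sigma_k(X))$ and $\TT_a(\sigma_k(X))$ is a genuine tangent space. I claim that for such $a$ the component $p^{-1}(a)'$ meets $I^0$. Indeed $p\colon I\to\Pb$ is proper, being the restriction of the projection of the projective variety $\Pb\times Z^{(k)}$, so $p(I\setminus I^0)$ is closed in $\sigma_k(Z)$; as $I=\overline{I^0}$ is irreducible we have $\dim(I\setminus I^0)<\dim I=mk+k-1$, and in the non-defective range this gives $p(I\setminus I^0)\subsetneq\sigma_k(Z)$ (cf. Remark~\ref{rem:large-d-incidence:1}(b)). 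Were $p^{-1}(a)'\subset I\setminus I^0$ for general $a$, these components would sweep out a subvariety of $I\setminus I^0$ dominating $\sigma_k(Z)$, a contradiction. Hence I may pick a general $w=(a,x_1,\dots,x_k)\in p^{-1}(a)'\cap I^0$; then $a\in\lin{x_1,\dots,x_k}$ with $\dim\lin{x_1,\dots,x_k}=k-1$, and fixing lifts $v_j$ of $x_j$ to the affine cone $\hat X\subset\A^{N+1}$ we may write $\hat a=\sum_{j}\lambda_j v_j$ with every $\lambda_j\neq0$, for otherwise $a\in\sigma_{k-1}(Z)$, against the generality of $a$.

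The core is then a single differential computation. Put $x=x_i=q_i(w)$; as $w$ runs through a dense subset of $p^{-1}(a)'$ the point $x$ runs through a general point of $q_i(p^{-1}(a)')$, and $\TT_x(X)$ is defined since we may take $x\in\Sm(X)$. Consider the sum map $\Psi\colon \hat X\times\cdots\times\hat X\to\A^{N+1}$, $(z_1,\dots,z_k)\mapsto z_1+\dots+z_k$, whose image has closure the affine cone $\widehat{\sigma_k(X)}$. Rescaling by the $\lambda_j\neq0$ (legitimate since $\hat X$ is a cone), the point $(\lambda_1 v_1,\dots,\lambda_k v_k)$ maps under $\Psi$ to $\hat a$. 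Because $\Psi$ factors through $\widehat{\sigma_k(X)}$ and $\hat a$ is a smooth point of $\widehat{\sigma_k(X)}$, the image of its differential is contained in $T_{\hat a}\widehat{\sigma_k(X)}$; on the other hand this image equals the span $\sum_j T_{\lambda_j v_j}\hat X$, which contains $T_{\lambda_i v_i}\hat X=\hat\TT_{x_i}(X)$, the affine cone over $\TT_{x}(X)$. Projectivising yields $\TT_x(X)\subset\TT_a(\sigma_k(X))$.

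The step I expect to be the real obstacle is the reduction guaranteeing a \emph{usable} tuple: that a general point of $p^{-1}(a)'$ is an honest, non-degenerate decomposition ($w\in I^0$ with all $\lambda_j\neq0$). Everything downstream is the formal one-directional Terracini estimate, which needs nothing beyond smoothness of $a$ on $\sigma_k(X)$ and of $x$ on $X$. When $\sigma_k(Z)$ is defective the clean dimension count above can fail, and one must instead rule out directly that the specific component $p^{-1}(a)'$ is absorbed into the degenerate locus $I\setminus I^0$; in our applications this is controlled by the Alexander--Hirschowitz dimension theorem and the identifiability results quoted in the introduction, and the borderline case $\sigma_{k-1}(Z)=\sigma_k(Z)$ is handled separately.
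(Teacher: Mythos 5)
Your core computation is correct: projectivising the differential of the sum map $\Psi\colon \hat X^{\times k}\to\A^{N+1}$ at a preimage of a smooth point $\hat a$ of the cone $\widehat{\sigma_k(X)}$ is precisely the one-directional Terracini inclusion, i.e.\ a self-contained proof of what the paper simply cites from \cite[corollary 1.4.2]{Rus}; up to that unfolding, your skeleton (smoothness of $a$, reduction to an honest spanning decomposition, Terracini) is the same as the paper's. The genuine gap sits exactly at the step you single out as the obstacle. Your argument that $p^{-1}(a)'$ meets $I^0$ rests on $p(I\setminus I^0)\subsetneq\sigma_k(Z)$, which you can only justify when $p\colon I\to\sigma_k(Z)$ is generically finite, i.e.\ $\dim\sigma_k(Z)=\dim I=mk+k-1$. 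But the lemma is stated for arbitrary $Z\subset X$ and is applied in the paper precisely in the opposite regime: in the proof of Theorem \ref{thm_m1}(ii) one takes $Z=\nu_d(\PP^1)$ with $d=2k-2$, so $\sigma_k(Z)=\PP^{2k-2}=\Pb$ while $\dim I=2k-1$; every fiber of $p$ is positive-dimensional there, and the conclusion is used with $q_i(p^{-1}(a))=Z$. In that range $\dim(I\setminus I^0)$ may well equal $\dim\sigma_k(Z)$, so nothing forces $p(I\setminus I^0)\neq\sigma_k(Z)$, and your proposed repair does not work: Alexander--Hirschowitz only computes $\dim\sigma_k$, and identifiability is exactly what \emph{fails} in this application (the generic fiber is a curve); neither result says anything about a fiber component being absorbed into $I\setminus I^0$. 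The same defect affects your claim that all $\lambda_j\neq0$, which needs $\sigma_{k-1}(Z)\subsetneq\sigma_k(Z)$ and is deferred but never handled.

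Both loose ends are closed by one standard fact, with no non-defectivity hypothesis: for the dominant morphism $p\colon I\to\sigma_k(Z)$ of irreducible varieties, \emph{every} irreducible component of every non-empty fiber has dimension at least $\dim I-\dim\sigma_k(Z)$ (cf.\ \cite[chap.~II, ex.~3.22]{H77}). Now let $E=(I\setminus I^0)\cup\overline{E_0}$, where $E_0\subset I^0$ is the closed locus of $(a,x_1,\dots,x_k)$ such that $a$ lies in the span of a proper sub-tuple (equivalently, some $\lambda_j=0$); $E$ is a proper closed subset of the irreducible variety $I$, so $\dim E<\dim I$. Decomposing $E$ into irreducible components and discarding those that do not dominate $\sigma_k(Z)$, one sees that for general $a\in\sigma_k(Z)$ every component of $p^{-1}(a)\cap E$ has dimension at most $\dim E-\dim\sigma_k(Z)<\dim I-\dim\sigma_k(Z)$. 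Hence no irreducible component $p^{-1}(a)'$ can be contained in $E$: its general point is a genuine decomposition $\hat a=\sum_j\lambda_j v_j$ with the $v_j$ linearly independent and all $\lambda_j\neq0$, in the defective and borderline cases as well. With this substitution your proof becomes complete --- and in fact more self-contained than the paper's, which asserts the reduction step without comment and cites Terracini for the rest.
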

\begin{proof}
  Since $\sigma_k(Z)  \setminus \Sing(\sigma_k(X))$ is non-empty open in $\sigma_k(Z)$ and $a$ is general, it is a smooth point of $\sigma_k(X)$; hence the embedded tangent space $\TT_a (\sigma_k(X))$ is defined. In addition, $a$ is contained in the $(k-1)$-plane $\lin{x_1, \dots, x_k}$
  for general $x_1, \dots, x_k \in Z$ with $x_i = x$.
  Then the assertion follows by Terracini's lemma (cf. \cite[Corollary 1.4.2]{Rus}, \cite[Chapter II, 1.10, Chapter V, 1.4]{Zak}).
\end{proof}

\begin{lemma}\label{lem:q_1F-fin}
  For a projective variety $Z$, let $\lin{Z}=\Pb$
  and consider the incidence $J$ as (\ref{incidence_on_Z}) with $k \geq 2$.
  Suppose that the ($k-1$)-secant of $Z$ is not defective and not equal to $\Pb$. Then,
  for a general point $(a,x_1, \dots, x_k)\in J$,
  and for any irreducible component $F$ of $p^{-1}(a)$ containing $(a,x_1, \dots, x_k)$,
  $q_i|_F: F \rightarrow q_i(F)$ is generically finite.
\end{lemma}

\begin{proof}
  For simplicity, we set $i=1$. First, since $(a,x_1, \dots, x_k)$ is a general point of the incidence $J$, we may assume that $a$ is a general point of $\sigma_k(Z)$ and $x_1,\dots,x_k$ are general $k$ points on $Z$.

  Let $\pi_{x_1}: \Pb \dashrightarrow \PP^{\beta-1}$ be the linear projection from $x_1$.
  Since $Z$ is non-degenerate in $\Pb$ and $\sigma_{k-1}(Z) \neq \Pb$, 
  the map $\pi_{x_1}|_{\sigma_{k-1}(Z)}$ is generically finite onto its image
  (otherwise, a general point $x_1$ is contained in $\Vertex(\sigma_{k-1}(Z))$, a linear subvariety of $\sigma_{k-1}(Z)$, a contradiction).

  Let $J'  \subset \Pb \times \Zk[k-1]$
  be the incidence for $(k-1)$-secant of $Z$, i.e., the closure of
  \begin{gather*}
    \set{(b,\tilde{x}_2, \dots, \tilde{x}_k)
      \mid b \in \lin{\tilde{x}_2, \dots, \tilde{x}_k} \text{ and } \dim \lin{\tilde{x}_2, \dots, \tilde{x}_k} = k-2}.
  \end{gather*}
  Since $\sigma_{k-1}(Z)$ is not secant defective,
  the first projection $p_{J'}: J' \rightarrow \sigma_{k-1}(Z)$ is generically finite.
  Hence the composite map
  \begin{gather}\label{eq:rho-comp-map}
    \rho = \pi_{x_1} \circ p_{J'}: J' \rightarrow \pi_{x_1}(\sigma_{k-1}(Z))
  \end{gather}
  is generically finite.

  Let $J_1 = q_1^{-1}(x_1)  \subset J$.
  Then
  $\pi_{x_1} \circ p|_{J_1}: J_1 \rightarrow \pi_{x_1}(\sigma_{k-1}(Z))$
  is dominant
  (this is because, for general $b \in \sigma_{k-1}(Z)$,
  we take a general point
  $c \in \lin{x_1, b}$ and take
  $k-1$ points $\tilde{x}_2,\dots,\tilde{x}_k \in Z$ such that
  $b \in \lin{\tilde{x}_2,\dots,\tilde{x}_k}$; then
  $(c,x_1,\tilde{x}_2,\dots,\tilde{x}_k) \in J_1$, whose image under $\pi_{x_1} \circ p$
  is $\pi_{x_1}(c) = \pi_{x_1}(b)$).
  Since $(a,x_1, \dots, x_k) \in J_1$ is general in $J$ and
  by the dominance of $\pi_{x_1} \circ p|_{J_1}$,
  we may consider $\alpha = \pi_{x_1}(a)$ as a general point in $\pi_{x_1}(\sigma_{k-1}(Z))$.

  Let $F$ be an irreducible component of $p^{-1}(a)$ containing $(a , x_1, x_2, \dots, x_k)$,
  and suppose that $q_1|_{F}$ has general fibers of positive dimensions.
  Then, the fiber of $q_1|_{F}$ at $x_1 \in q_1(F)$,
  \begin{gather*}
    F \cap J_1
  \end{gather*}
  is of positive dimension, which means that
  we have $(a, x_1, \tilde{x}_2,\dots,\tilde{x}_k) \in F \cap J_1$
  for fixed $a,x_1$ and moving $(\tilde{x}_2,\dots,\tilde{x}_k)$ with positive dimension.

  For general $(a, x_1, \tilde{x}_2,\dots,\tilde{x}_k) \in F \cap J_1$,
  we have the intersection point $b(\tilde{x}_2,\dots,\tilde{x}_k)$ of
  the line $\lin{a, x_1}$ and
  the hyperplane $\lin{\tilde{x}_2,\dots,\tilde{x}_k}$ in $\lin{x_1, \tilde{x}_2,\dots,\tilde{x}_k} = \PP^{k-1}$.
  Note that
  \begin{gather*}
    \pi_{x_1}(b(\tilde{x}_2,\dots,\tilde{x}_k))=\pi_{x_1}(a)=\alpha.
  \end{gather*}
  The $k$-tuple $(b(\tilde{x}_2,\dots,\tilde{x}_k), \tilde{x}_2,\dots,\tilde{x}_k)$
  moves with positive dimension since so does ($k-1$)-tuple $(\tilde{x}_2,\dots,\tilde{x}_k)$.
  In other words, the following locus is of positive dimension,
  \[
    \set{(b(\tilde{x}_2,\dots,\tilde{x}_k), \tilde{x}_2,\dots,\tilde{x}_k)
      \mid (a, x_1, \tilde{x}_2,\dots,\tilde{x}_k) \in F \cap J_1}
    \subset \rho^{-1}(\alpha)  \subset J',
  \]
  which contradicts that $\rho$, given in (\ref{eq:rho-comp-map}), is generally finite.
\end{proof}

\begin{lemma}\label{lem:q_1F=Z}
  For a projective variety $Z \subset\lin{Z}=\Pb$, suppose that $\sigma_{k-1}(Z)$ is a hypersurface in $\Pb$ and $\sigma_{k}(Z)=\Pb$. Then, we have $q_i(p^{-1}(a))=Z$ for a general point $a\in\sigma_{k}(Z)$.
  In particular, there is an irreducible component $F$ of $p^{-1}(a)$ such that $q_i(F)=Z$.
\end{lemma}

\begin{proof}
  Since $p^{-1}(a)$ is invariant under permuting $x_i$-factors, we set $i=1$ for simplicity. Take a general $a\in\sigma_k(Z)=\Pb$.
  Since $\Vertex(\sigma_{k-1}(Z))$ is a linear subvariety of $\sigma_{k-1}(Z) \subsetneq\Pb$ and $Z$ is non-degenerate in $\Pb$,
  \begin{gather*}
    \Vertex(\sigma_{k-1}(Z))\cap Z \subsetneq Z.
  \end{gather*}
  This implies that for general $x\in Z$,
  $\dim\lin{x, \sigma_{k-1}(Z)} > \dim(\sigma_{k-1}(Z))$ so that $\lin{x, \sigma_{k-1}(Z)}=\Pb$. Thus, any given general point $a\in\Pb$ sits on a line $\lin{x,b}$ for any general $x\in Z$ and for some general $b\in\sigma_{k-1}(Z)$.
  Taking $k-1$ points $\tilde{x_2},\dots,\tilde{x_k} \in Z$ such that $b \in \lin{\tilde{x_2},\dots,\tilde{x_k}}$,
  we have $(a,x,\tilde{x_2},\dots,\tilde{x_k})\in p^{-1}(a)$, which means $q_1(p^{-1}(a))=Z$.
\end{proof}

Now let us focus on the case $Z = v_d(\Pm)  \subset \PP^{\beta=\binom{m+d}{m}-1}$,
the image of the $d$-uple Veronese embedding of $\Pm$. Here we prove a very useful proposition, which is of independent interest itself. In Proposition~\ref{prop:dim-qiF-k-1-plane}, we consider the entry locus of a general point in $Z$ and estimate the dimension of the linear span of its image under ($d-1$)-uple Veronese embedding $v_{d-1}: \Pm \rightarrow \PP^{\beta_{d-1} = \binom{m+d-1}{m}-1}$.

\begin{proposition}\label{prop:dim-qiF-k-1-plane}
  Let $Z = v_d(\Pm) \subset \Pb$ with
  $d \geq 3$, $2 \leq m \leq k-2$, and
  $\beta=\binom{m+d}{m}-1 < km+k-1$.
  Assume $\dim (\sigma_{k-1}(Z)) = (k-1)m+k-2 < \beta$ (in other words, the ($k-1$)-th secant of $Z$ is not defective
  and not equal to $\Pb$), where we have $(k-1)m+k \leq \binom{m+d}{m}$.
  Let $J  \subset \Pb \times \Zk$ be the Zariski closure of
  the incidence (\ref{incidence_on_Z}),
  let $(a,x_1, \dots, x_k) \in J$ be a general point,
  and let $F  \subset J$ be an irreducible component of $p^{-1}(a)$ containing
  $(a,x_1, \dots, x_k)$.
  Then the following holds.
  
  \begin{enumerate}[(i)]
  \item 
    If $(k-1)m+k < \binom{m+d}{m}$, 
    then we have
    \begin{gather*}
      \dim \lin{v_{d-1}(A)}
      \geq k + (km+k-1) - \dim \sigma_{k}(Z)
    \end{gather*}
    for the preimage $A  \subset \Pm$ of $q_i(F)  \cup \set{x_1, \dots, x_k}  \subset Z$
    under $v_d: \Pm \simeq Z$ and for each $1 \leq i \leq k$.
  \item 
    If $(k-1)m+k = \binom{m+d}{m}$, then $q_i(F) = Z$.
    In addition, if $(d,m) \neq (3,2)$, then $\dim \lin{v_{d-1}(\Pm)} \geq k + m$.

  \end{enumerate}
\end{proposition}

\begin{remark}\label{rem:cond-k-appear}

  \begin{enumerate}[(a)]
  \item
    Two inequalities
    $\beta=\binom{m+d}{m}-1 < km+k-1$
    and $(k-1)m+k \leq \binom{m+d}{m}$
    are equivalent to
    $\frac{\binom{m+d}{m}}{m+1} < k < \frac{\binom{m+d}{m}}{m+1}+1$;
    it occurs if and only if
    $\frac{\binom{m+d}{m}}{m+1} \notin \NN$
    and $k=\Bigr\lceil \frac{\binom{m+d}{m}}{m+1} \Bigl\rceil$.

  \item 
    In (ii) of Proposition~\ref{prop:dim-qiF-k-1-plane}, if $(d,m)=(3,2)$, then the condition $(k-1)m+k = \binom{m+d}{m}$ gives $k=4$. In this case, $q_i(F) = Z$ is still true (e.g., by Lemma~\ref{lem:q_1F-fin}), but $\dim \lin{v_{d-1}(\Pm)}=\dim \lin{v_{2}(\P^2)}=5$ is $k+m-1$, not greater or equal to $k+m$.
  \end{enumerate}
\end{remark}

To prove Proposition~\ref{prop:dim-qiF-k-1-plane}, we settle two lemmas, Lemmas \ref{lem:codim-vd1Pm} and \ref{lem:R-cap-vPm}; the former one is technical and the latter geometric.

\begin{lemma}\label{lem:codim-vd1Pm}
  Let $d, m, k$ be integers such that $d \geq 3$, $2 \leq m \leq k-2$.
  \begin{enumerate}[(i)]
  \item 
    If $(k-1)m+k < \binom{m+d}{m} < km+k$, 
    then $\binom{m+d-1}{m}-1 -2m-k \geq 0$.
  \item 
    If  $(k-1)m+k = \binom{m+d}{m}$ and $(d,m) \neq (3,2)$,
    then $\binom{m+d-1}{m}-1 -m-k \geq 0$.

  \item 
    If $\binom{m+d}{m} = km+k$, then $\binom{m+d-1}{m}-1 -m-k \geq 0$. 
  \end{enumerate}

\end{lemma}

Note that (iii) of Lemma~\ref{lem:codim-vd1Pm}
is applied in a discussion of the proof of Part (ii) of Theorem~\ref{general_m},
though it is not used in this section.

To show the lemma, we need some calculations as follows.

\begin{remark}\label{rem:small-m}
  (a)
  Let $m=2$ and $km+k > \binom{m+d}{m}$.
  Then
  $(k-1)m+k < \binom{m+d}{m}$ does not occur.
  Otherwise, we have
  $3k-2 < \binom{m+d}{m} = \frac{(d+2)(d+1)}{2} < 3k$,
  and then $\frac{(d+2)(d+1)}{2} = 3k-1$.
  Considering the congruence modulo $3$,
  we have $(d+2)(d+1) \equiv 6k-2 \equiv 1 \pmod{3}$.
  Then the possible values of $(d+2)(d+1)$ are
  \begin{gather*}
    (d+2)(d+1) \equiv
    \begin{cases}
      2\cdot 1 \equiv 2 & (d=0)\\
      0\cdot 2 \equiv 0 & (d=1)\\
      1\cdot 0 \equiv 0 & (d=2),
    \end{cases}
  \end{gather*}
  modulo $3$, which is absurd.

  (b)
  For $d=3,4,5$,
  we calculate numbers $m$ satisfying the conditions
  $\frac{\binom{m+d}{m}}{m+1} \notin \NN$,
  $k=\Bigr\lceil \frac{\binom{m+d}{m}}{m+1} \Bigl\rceil$,
  and
  $(k-1)m+k < \binom{m+d}{m}$.
  For $d=3$, the smallest $m=5$.
  For $d=4$, the smallest $m=3$ and the next smallest $m=7$.
  For $d=5$, the smallest $m=9$.
  The explicit values of $\delta = \binom{m+d-1}{m}-(1+k+2m)$ for them are obtained
  as follows:
  \begin{gather*}
    (d,m,k,\delta)=(3,5,10,0), (4,3,9,4), (4,7,42,63), (5,9,201,495).
  \end{gather*}
\end{remark}

\begin{proof}[Proof of Lemma~\ref{lem:codim-vd1Pm}]
  (i)
  From Remark~\ref{rem:small-m} (a), we may assume $m \geq 3$.
  Let $\delta = \binom{m+d-1}{m}-(1+k+2m)$.
  Since $\binom{m+d-1}{m} = \frac{d}{m+d} \binom{m+d}{m}$, 
  using $(k-1)m+k+1 \leq \binom{m+d}{m}$,
  we have
  \begin{gather*}
    \delta=\binom{m+d-1}{m}-(1+k+2m)
    \geq \frac{1}{m+d}\Bigl(d((k-1)m+k+1)-(m+d)(1+k+2m)\Bigr).
  \end{gather*}
  Setting $k = m+a$ with $a \geq 2$, we have
  \begin{multline}\label{delta-middle-eq}
    d((k-1)m+k+1)-(m+d)(1+k+2m) 
    = m(k(d-1)-3d-1-2m)
    \\
    = m((m+a)(d-1)-3d-1-2m)
    \geq m((m+2)(d-1)-3d-1-2m)
    \\
    =
    m(dm-3m-d-3)
    =m((d-3)(m-1)-6).
  \end{multline}
  Then, $\delta \geq 0$ holds in the following three cases: $d \geq 6$ and $m \geq 3$;
  $d = 5$ and $m \geq 4$; or $d = 4$ and $m \geq 7$.
  In addition, in Remark~\ref{rem:small-m} (b) (see also Remark~\ref{rem:cond-k-appear} (a)),
  we explicitly check that $\delta \geq 0$ if $d = 4$ and $m\leq 6$,
  and check that there is no $k$ in our range if $d = 5$ and $m=3$.

  On the other hand, when $d=3$, (\ref{delta-middle-eq}) implies
  \begin{gather*}
    d((k-1)m+k+1)-(m+d)(1+k+2m) = m(2(m+a)-10-2m) = m(2a-10).
  \end{gather*}
  Hence, $\delta \geq 0$ holds if $d=3$ and $a \geq 5$.
  For $d=3$ and $k = m+a$ with $a =2,3,4$, we have
  \begin{gather*}
    \delta = \frac{m^2-m-2k}{2} = \frac{m(m-3)-2a}{2} > 0
  \end{gather*}
  if $m \geq 5$.
  In addition, in Remark~\ref{rem:small-m} (b),
  we explicitly check that there is no $k$ in our range if $d = 3$ and $m\leq 4$.
  \medskip

  (ii)
  Let $\delta = \binom{m+d-1}{m}-(1+k+m)$.
  In the same way as (i),
  using $(k-1)m+k = \binom{m+d}{m}$, we have
  \begin{gather*}
    \delta=\binom{m+d-1}{m}-(1+k+m)
    = \frac{1}{m+d}\Bigl(d((k-1)m+k)-(m+d)(1+k+m)\Bigr),
  \end{gather*}
  and
  \begin{multline*}
    d((k-1)m+k)-(m+d)(1+k+m) 
    = km(d-1)-2dm-m-m^2- d
    \\
    \geq (m+2)m(d-1)-2dm-m-m^2- d 
    =(d-2)(m^2-1)-2-3m.
  \end{multline*}

  If $d\geq 3$ and $m \geq 4$, then
  since $(d-2)(m^2-1)-2-3m \geq m(m-3)-3 \geq 1$, it holds $\delta \geq 0$.

  If $d\geq 4$ and $m = 3$,
  then since $(d-2)(m^2-1)-2-3m \geq 5$,
  it similarly holds
  $\delta \geq 0$.
  \medskip

  On the other hand, if $d=3$ and $m=3$, then
  $(k-1)m+k = \binom{m+d}{m}$
  implies
  \begin{gather*}
    4k-3 = \frac{(d+3)(d+2)(d+1)}{3!} = 20;
  \end{gather*}
  this case does not occur since $k$ cannot be an integer.

  If $d\geq 4$ and $m=2$, then
  $3k-2 = \frac{(d+2)(d+1)}{2}$.
  In this case, 
  $\delta = \frac{(d+1)d}{2}-k-3 \geq 0$,
  because of
  \begin{gather*}
    3\left(\frac{(d+1)d}{2}-k-3\right) \geq \frac{3(d+1)d}{2}-\frac{(d+2)(d+1)}{2}-11
    = d^2-12 \geq 4.
  \end{gather*}

  (iii)
  Next, we assume $\binom{m+d}{m} = km+k$.
  Then,
  \begin{gather*}
    \delta = \binom{m+d-1}{m}-(1+k+m) = \frac{1}{m+d}\Bigl(d(km+k)-(m+d)(1+k+m)\Bigr).
  \end{gather*}
  Using $k = m+a = (m+1)+(a-1)$ with $a \geq 2$, we have
  \begin{multline*}
    d(km+k)-(m+d)(1+k+m)
    \\
    = (d-1)km-(m+d)(m+1)
    = (d-1)m(m+1) + (d-1)m(a-1) -(m+d)(m+1)
    \\
    = (m+1)((d-1)m-(m+d)) + (d-1)m(a-1)
    \\
    = (m+1)((d-2)(m-1)-2) + (d-1)m(a-1).
  \end{multline*}
  If $d \geq 3, m \geq 2$ and $(d,m) \neq (3,2)$, then we have $\delta \geq 0$.
  If $(d,m) = (3,2)$, then since
  \begin{gather*}
    (m+1)((d-2)(m-1)-2) + (d-1)m(a-1) = -3+4(a-1) \geq 1,
  \end{gather*}
  we also have $\delta \geq 0$.
\end{proof}

The next lemma concerns a general fact on linear sections of Veronese varieties, which has an independent interest itself.

\begin{lemma}\label{lem:R-cap-vPm}
  Let $k,m \geq 2$.
  Let $x_1', \dots, x_k' \in \Pm$ be general $k$ points,
  let $v_e: \Pm \rightarrow \PP^{\beta_e = \binom{m+e}{m}-1}$ be
  the $e$-uple Veronese embedding of $\Pm$,
  and let
  $M = \lin{v_{e}(x_1'), v_{e}(x_2'),\dots, v_{e}(x_k')} \subset \PP^{\beta_{e}}$.
  Then, for any $k$-plane $R  \subset \PP^{\beta_{e}}$ 
  containing the ($k-1$)-plane $M$,
  the following holds.

  \begin{enumerate}[(i)]
  \item
    Assume $k \leq \beta_e - m$, and assume that
    there is
    a curve $C  \subset {R \cap v_{e}(\PP^m)}$ passing through $v_{e}(x_1')$.
    Then it holds that
    \begin{gather}\label{eq:C-in-lin-x-TTxk}
      R \subset \lin{{v_{e}(x_2'), \dots, v_{e}(x_{k}')}, {\TT_{v_{e}(x_1')}v_{e}(\PP^m)}}.
    \end{gather}

  \item
    Assume $e \geq 3$ and $k\le\beta_e-2m$. Then
    we have
    \begin{gather*}
      \dim_{v_{e}(x_1')} ({R \cap v_{e}(\PP^m)}) = 0,
    \end{gather*}
    where the left hand side means dimension of component(s) passing through $v_{e}(x_1')$. In particular, the set of a point
    $\set{v_{e}(x_1')}$ is an irreducible component of
    ${R \cap v_{e}(\PP^m)}$.

  \item 
    Assume $e=2$ and $k-1\le\beta_2-2m$. Assume that there is a curve $C  \subset R \cap v_2(\Pm)$ such that $v_2(x_1') \in C$.
    Then, for any irreducible subset $D \subset R \cap v_{2}(\PP^m)$,
    it holds $D \subset v_2(\lin{x_1', x_l'})$
    for some $l = 2, \dots, k$,
    where $v_2(\lin{x_1', x_l'})$ is a conic curve in $\PP^{\beta_2}$ given as the image of the line $\lin{x_1', x_l'}  \subset \Pm$,
  \end{enumerate}
\end{lemma}

\begin{proof}
  Let $C  \subset {R \cap v_{e}(\PP^m)}$ be a curve passing through $v_{e}(x_1')$.
  Let 
  \begin{gather*}
    \pi = \pi_{\lin{v_{e}(x_2'), \dots, v_{e}(x_{k}')}}:
    \PP^{\beta_{e}} \dashrightarrow \PP^{\beta_{e} -k+1}
  \end{gather*}
  be the linear projection from the ($k-2$)-plane
  $\lin{v_{e}(x_2'), \dots, v_{e}(x_{k}')}$.
  If $k\le\beta_e-m$,
  then the generalized trisecant lemma (\cite[Proposition 1.4.3]{Rus}) implies
  \begin{gather*}
    M \cap v_{e}(\PP^m)
    = \set{v_{e}(x_1'), v_{e}(x_2'), \dots, v_{e}(x_k')}.
  \end{gather*}
  In particular, $\dim (M \cap v_{e}(\PP^m)) = 0$ and $C  \not\subset M$.
  We have  $\pi(v_{e}(x_1')) \in \overline{\pi(C)} \subset \PP^{\beta_{e} -k+1}$
  because of $v_{e}(x_1') \in C$.
  If $C$ is contracted to a point under $\pi$, then $\overline{\pi(C)} = \pi(v_{e}(x_1'))$, which means that $C \subset M$, a contradiction.
  Hence $\overline{\pi(C)}$ must be a curve.

  For the $k$-plane $R \subset \PP^{\beta_e}$, which contains the ($k-2$)-dimensional center of $\pi$,
  the image $\overline{\pi(R)}$ is a line in $\PP^{\beta_{e} -k+1}$.
  Thus $\overline{\pi(C)} = \overline{\pi(R)}$. Moreover,
  it follows
  \begin{gather*}
    \overline{\pi(C)}=\overline{\pi(R)} = \TT_{\pi(v_{e}(x_{1}'))}\overline{\pi(R)} \subset \TT_{\pi(v_{e}(x_{1}'))} \overline{\pi(v_{e}(\PP^m))}
  \end{gather*}
  where, by generic smoothness, the right hand side is equal to $\pi({\TT_{v_{e}(x_{1}')}v_{e}(\PP^m)})$
  since $x_{1}' \in \Pm$ is general.
  It follows that
  $R$ is contained in the preimage of $\pi({\TT_{v_{e}(x_{1}')}v_{e}(\PP^m)})$,
  which implies the inclusion~(\ref{eq:C-in-lin-x-TTxk}) of (i).
  \medskip

  The condition $k\le\beta_e-m$ holds if $k$ or $k-1$ is $\le\beta_e-2m$.
  Next, we consider a tangential projection
  \begin{gather*}
    \pi_{\TT_{v_{e}(x_{1}')}v_{e}(\PP^m)}:
    \PP^{\beta_{e}} \dashrightarrow \PP^{\beta_{e} -m-1}
  \end{gather*}
  from the $m$-plane ${\TT_{v_{e}(x_{1}')}v_{e}(\PP^m)} \subset \PP^{\beta_{e}}$,
  and its restriction $\tilde{\pi} = \pi_{\TT_{v_{e}(x_{1}')}v_{e}(\PP^m)}|_{v_{e}(\PP^m)}$
  on $v_{e}(\PP^m)$.
  Note that the Veronese variety $v_{e}(\PP^m)$ and any embedded tangent space to $v_{e}(\PP^m)$ intersect only at one point;
  in particular, $v_{e}(\PP^m) \cap {\TT_{v_{e}(x_{1}')}v_{e}(\PP^m)}= \set{v_{e}(x_{1}')}$.
  If $R \subset \Pb$ satisfies (\ref{eq:C-in-lin-x-TTxk}), we have
  \begin{gather}\label{tilde_pi_C}
    \tilde{\pi}(R \cap v_{e}(\PP^m))  \subset \tilde{\pi}(v_e(\Pm)) \cap \lin{\tilde{\pi}(v_{e}(x_2')), \dots, \tilde{\pi}(v_{e}(x_{k}'))}.
  \end{gather}
  By Terracini's lemma,
  for general $z \in v_{e}(\Pm)$,
  the linear variety $\lin{\TT_{v_e(x_1')}v_{e}(\Pm), \TT_{z}v_{e}(\Pm)}$
  coincides with an embedded tangent space to $\sigma_2(v_{e}(\PP^m))$,
  and is of dimension $\dim\sigma_2(v_{e}(\PP^m))$.
  Then,
  \begin{gather*}
    \pi_{\TT_{v_{e}(x_{1}')}v_{e}(\PP^m)}(\TT_{z}v_{e}(\Pm))
    = \TT_{\tilde{\pi}(z)}\tilde{\pi}(v_{e}(\Pm)) \subset \PP^{\beta_{e} -m-1}
  \end{gather*}
  is of dimension $\dim\sigma_2(v_{e}(\PP^m)) - m - 1$.
  It follows
  \begin{gather}\label{eq:dim-im-tilde-pi}
    \dim\tilde{\pi}(v_{e}(\Pm)) = \dim\sigma_2(v_{e}(\PP^m)) - m - 1.
  \end{gather}

  Let $e \geq 3$ and $k\le\beta_e-2m$. Suppose that $\dim_{v_{e}(x_1')} ({R \cap v_{e}(\PP^m)}) > 0$, which means the existence of a curve $C$ satisfying the condition of (i).
  Since
  \begin{gather*}
    \codim(\tilde{\pi}(v_{e}(\Pm)), \PP^{\beta_{e} -m-1})-(k-1)\ge(\beta_e-2m-1)-(k-1) \geq 0,
  \end{gather*}
  again the trisecant lemma implies that the right hand side in (\ref{tilde_pi_C}) is only the set of $k-1$ points
  ${\tilde{\pi}(v_{e}(x_2')), \dots, \tilde{\pi}(v_{e}(x_{k}'))}$.
  Thus, each irreducible subset $D \subset R \cap v_{e}(\PP^m)$ satisfies
  \begin{gather}\label{eq:C-contr-to-pt}
    \tilde{\pi}(D) = \tilde{\pi}(v_{e}(x_{l}'))
  \end{gather}
  for some $l = 2, \dots, k$. (At least, taking $D=C$, we exactly have (\ref{eq:C-contr-to-pt}).)
  From $e \geq 3$, $\dim \sigma_2(v_{e}(\PP^m)) = 2m+1$ (i.e., non-defective).
  In this case, by (\ref{eq:dim-im-tilde-pi}),
  $\dim\tilde{\pi}(v_{e}(\Pm)) = m$, i.e.,
  the map $\tilde{\pi}$ must be generically finite. Then we reach a contradiction since $v_{e}(x_{l}')$ is a general point. It implies that $\dim_{v_{e}(x_1')} ({R \cap v_{e}(\PP^m)}) = 0$.
  \medskip

  Finally, we consider the case of $e=2$.
  Then $\dim \sigma_2(v_{2}(\PP^m)) = 2m$ for $m \geq 2$ (i.e., defective),
  and by (\ref{eq:dim-im-tilde-pi}), $\dim\tilde{\pi}(v_{2}(\Pm)) = m-1$.
  It means that
  the tangential projection
  $\tilde{\pi}: v_2(\Pm) \dashrightarrow \tilde{\pi}(v_2(\Pm))$
  has fibers of dimension $1$.
  Moreover, as in Remark~\ref{rem:def-y'} (a) we know that
  \begin{gather*}
    \pi_{\TT_{v_{2}(x_{1}')}v_{2}(\PP^m)} : \PP^{\beta_{2}=\frac{(m+2)(m+1)}{2} -1} \dashrightarrow \PP^{\beta_{2} -m-1=\frac{(m+1)m}{2} -1}
  \end{gather*}
  satisfies the commutative diagram:
  \begin{equation*}
    \begin{tikzcd}
      \Pm \arrow[r,hook,"v_2"] \arrow[d, dashed, "\pi_{x_{1}'}"'] & \PP^{\frac{(m+2)(m+1)}{2} -1} \arrow[d, dashed, "\pi_{\TT_{v_{2}(x_{1}')}v_{2}(\PP^m)}"] \\
      \PP^{m-1} \arrow[r,hook,"v_2"] & \PP^{\frac{(m+1)m}{2} -1}\quad,
    \end{tikzcd}
  \end{equation*}
  where $\pi_{x_{1}'}: \Pm \dashrightarrow \PP^{m-1}$ is
  the linear projection from $x_{1}'$, and
  $v_2: \PP^{m-1} \hookrightarrow \PP^{\frac{(m+1)m}{2} -1}$
  is the Veronese embedding of $\PP^{m-1}$.
  Then
  \begin{gather*}
    \dim(\tilde{\pi}(v_{2}(\Pm))) = \dim(v_{2}(\PP^{m-1})) = m-1.
  \end{gather*}
  If $k-1\le\beta_2-2m$, then
  \begin{gather*}
    \codim(\tilde{\pi}(v_{2}(\Pm)), \PP^{\beta_{2} -m-1})-(k-1)\ge(\beta_2-2m)-(k-1) \geq 0;
  \end{gather*}
  hence
  the trisecant lemma implies $\tilde{\pi}(D) = \tilde{\pi}(v_{2}(x_{l}'))$ for some $l = 2, \dots, k$ as we discussed for (\ref{eq:C-contr-to-pt}).

  In the diagram above, for any $y' \in \lin{x_1', x_l'} \subset \Pm$ with $y' \neq x_1'$,
  we have
  \begin{gather*}
    \tilde{\pi}(v_2(y')) = v_2(\pi_{x_1'}(y')) = v_2(\pi_{x_1'}(x_l')) = \tilde{\pi}(v_2(x_l'));
  \end{gather*}
  indeed, $\tilde{\pi}^{-1}(\tilde{\pi}(v_2(x_l'))) = v_2(\lin{x_1', x_l'})$.
  Since $\tilde{\pi}(D) = \tilde{\pi}(v_{2}(x_{l}'))$, we have $D \subset v_2(\lin{x_1', x_l'})$.
\end{proof}

We give one calculation before proving Proposition~\ref{prop:dim-qiF-k-1-plane}.

\begin{remark}\label{d3-fib-dim-geq-2}
  For $d=3$ and $m \neq 2$, if
  $\mu_0 = \frac{\binom{m+d}{m}}{m+1} \notin \NN$
  and $k=\lceil \mu_0 \rceil$
  (as under the conditions of
  Proposition~\ref{prop:dim-qiF-k-1-plane} and Remark~\ref{rem:cond-k-appear} (a)),
  then we have $(km+k-1) - \beta_d \neq 1$.
  The reason is as follows. First, we may write
  $\mu_0 = {(m+3)(m+2)}/6 = M/3$ for some $M \in \NN$
  since $(m+3)(m+2)$ is divisible by $2$. In addition, dividing $M$ by $3$ with remainder,
  we have $M = 3Q + R$ for a quotient $Q$ and a remainder $R$.
  Since $\mu_0=M/3 \notin \NN$, $R$ must be $1$ or $2$.
  In this setting,
  $k=\lceil M/3\rceil = Q+1$. It follows that $(km+k-1) - \beta_d$ is equal to
  \begin{align*}
     & (km+k) - \tbinom{m+3}{3} = (m+1)\left({k-\tfrac{(m+3)(m+2)}{6}}\right) = (m+1)\left((Q+1)-(Q+\tfrac{R}{3})\right)
    = (m+1)\cdot \tfrac{3-R}{3}.
  \end{align*}
  If $(km+k-1) - \beta_d = 1$, then $3 = (m+1)(3-R)$.
  Since $3-R$ is $1$ or $2$ and $m \in \NN$, we get $m=2$.
\end{remark}
\smallskip

\begin{proof}[Proof of Proposition~\ref{prop:dim-qiF-k-1-plane}]
  (i)
  For simplicity, we set $i=1$; then $x_1 \in q_1(F)  \subset Z=v_d(\Pm)$.
  For $s=\dim \sigma_{k}(Z)$,
  an irreducible component $F$ of $p^{-1}(a)$
  is of dimension $\dim J - s=(km+k-1) - s$.
  From Lemma~\ref{lem:q_1F-fin}, we have $\dim q_1(F)=(km+k-1) - s$.

  Let $q_1(F)'  \subset \Pm$ be the preimage of $q_1(F)  \subset Z$ under
  $v_d: \Pm \simeq Z$, and let
  \begin{gather*}
    A = q_1(F)'  \cup \set{x_1', \dots, x_k'}  \subset \Pm.
  \end{gather*}
  Let $v_{d-1}: \Pm \rightarrow \PP^{\beta_{d-1}}$
  be the ($d-1$)-uple Veronese embedding.
  Then the ($k-1$)-plane
  \begin{gather*}
    \lin{v_{d-1}(x_1'), \dots, v_{d-1}(x_k')}  \subset \PP^{\beta_{d-1}}
  \end{gather*}
  is contained in the linear variety
  \begin{gather*}
    \lin{v_{d-1}(A)} = \lin{v_{d-1}(q_1(F)')  \cup \set{v_{d-1}(x_1'), \dots, v_{d-1}(x_k')}},
  \end{gather*}
  and is of codimension $c = \dim \lin{v_{d-1}(A)} - (k-1)$.
  By Lemma~\ref{lem:codim-vd1Pm} (i),
  $\beta_{d-1} -2m-k \geq 0$.
  So, by the generalized trisecant lemma,
  \begin{gather*}
    v_{d-1}(\Pm) \cap \lin{v_{d-1}(x_1'), \dots, v_{d-1}(x_k')}
    = \set{v_{d-1}(x_1'), \dots, v_{d-1}(x_k')}.
  \end{gather*}
  In particular,
  \begin{gather*}
    v_{d-1}(q_1(F)') \cap \lin{v_{d-1}(x_1'), \dots, v_{d-1}(x_k')}
    \subset \set{v_{d-1}(x_1'), \dots, v_{d-1}(x_k')}.
  \end{gather*}
  Since $\dim q_1(F)' \geq 1$, we may take a point $y' \in q_1(F)'$
  such that
  \begin{gather*}
    v_{d-1}(y') \notin \lin{v_{d-1}(x_1'), \dots, v_{d-1}(x_k')}.
  \end{gather*}

  Assume $d \geq 4$. 
  Applying Lemma~\ref{lem:R-cap-vPm} (ii) to
  \begin{gather*}
    R = \lin{v_{d-1}(x_1'), \dots, v_{d-1}(x_k'), v_{d-1}(y')}
    \subset \lin{v_{d-1}(A)},
  \end{gather*}
  we have
  \begin{gather*}
    \dim_{v_{d-1}(x_1')} ({R\cap v_{d-1}(\PP^m) }) = 0.
  \end{gather*}
  In particular,
  $\dim_{v_{d-1}(x_1')} (R\cap v_{d-1}(q_1(F)')) = 0$.
  Regarding it as an intersection of two irreducible subvarieties in $\lin{v_{d-1}(A)}$,
  we deduce that every irreducible component
  of $R\cap v_{d-1}(q_1(F)')$
  is of dimension $\geq \dim(v_{d-1}(q_1(F)')) - (c-1)$.
  Hence
  \begin{gather*}
    \dim(\lin{v_{d-1}(A)}) \geq k+\dim(v_{d-1}(q_1(F)')) = k+ (km+k-1) - s.
  \end{gather*}
  
  Next, let us consider the case of $d = 3$.
  For $l = 2, \dots, k$,
  since
  ${v_{2}(\lin{x_1', x_l'})}  \subset \PP^{\beta_{2}}$ is a conic,
  it follows that $\lin{v_{2}(\lin{x_1', x_l'})}$ is a $2$-plane, which is equal to
  $\lin{v_{2}(x_1'), v_{2}(x_l'), z}$ for some $z \in \PP^{\beta_{2}}$.
  Then
  \begin{gather*}
    \lin{v_{2}(x_1'), \dots, v_{2}(x_k'), v_{2}(\lin{x_1', x_l'})} = \lin{v_{2}(x_1'), \dots, v_{2}(x_k'), z}
  \end{gather*}
  is a linear subvariety of dimension $\leq k$.
  Since $v_{2}(q_1(F)') \cap \lin{v_{2}(x_1'), \dots, v_{2}(x_k')}$ is empty
  or is a set of points,
  the intersection
  \begin{gather*}
    v_{2}(q_1(F)') \cap \lin{v_{2}(x_1'), \dots, v_{2}(x_k'), v_{2}(\lin{x_1', x_l'})}
  \end{gather*}
  is of dimension $\leq 1$.
  On the other hand, since $m\neq 2$ by Remark~\ref{rem:small-m} (a),
  we have
  \begin{gather*}
    \dim q_1(F)'=(km+k-1)-\beta\geq 2
  \end{gather*}
  as in Remark~\ref{d3-fib-dim-geq-2}.
  For the union
  \begin{gather*}
    W=\bigcup_{l = 2, \dots, k} v_{2}(q_1(F)') \cap \lin{v_{2}(x_1'), \dots, v_{2}(x_k'), v_{2}(\lin{x_1', x_l'})}  \subset \PP^{\beta_{2}},
  \end{gather*}
  we see that $q_1(F)'  \setminus v_2^{-1}(W)\neq\emptyset$ and may take 
  $y' \in q_1(F)'  \setminus v_2^{-1}(W)$.
  
  Let $R = \lin{v_{2}(x_1'), \dots, v_{2}(x_k'), v_{2}(y')}
   \subset \lin{v_{2}(A)}$ and suppose that
  $\dim_{v_{2}(x_1')} (R\cap v_{2}(q_1(F)')) > 0$, that is to say,
  there is a curve $C  \subset R\cap v_{2}(q_1(F)')$ containing ${v_{2}(x_1')}$.
  Taking $D=C$ and applying Lemma~\ref{lem:R-cap-vPm} (iii),
  we have $C = v_{2}(\lin{x_1', x_l'})$ for some $l > 1$.
  If
  \begin{gather*}
    \dim\lin{v_{2}(x_1'), \dots, v_{2}(x_k'), v_{2}(\lin{x_1', x_l'})}=k,
  \end{gather*}
  then $R = \lin{v_{2}(x_1'), \dots, v_{2}(x_k'), v_{2}(\lin{x_1', x_l'})}$,
  contradicting to the definition of $W$ and our choice of $y'$.
  Else if
  \begin{gather*}
    \dim\lin{v_{2}(x_1'), \dots, v_{2}(x_k'), v_{2}(\lin{x_1', x_l'})}=k-1,
  \end{gather*}
  then
  $C=v_{2}(\lin{x_1', x_l'}) \subset v_{2}(q_1(F)') \cap \lin{v_{2}(x_1'), \dots, v_{2}(x_k')}$,
  also contradicting that the intersection is of dimension at most $0$.

  Hence $\dim_{v_{2}(x_1')} (R\cap v_{2}(q_1(F)')) = 0$. Then, in the same way as above,
  we have
  \begin{gather*}
    \dim(\lin{v_{2}(A)}) \geq k+ (km+k-1) - s.
  \end{gather*}

  (ii) 
  In the case when
  \begin{gather*}
    (k-1)m+k = \tbinom{m+d}{m},
  \end{gather*}
  we have $km+k-1-s \geq m$.
  It follows from Lemma~\ref{lem:q_1F-fin} and $\Pm\simeq Z$ that
  $q_i(F) = Z$. From Lemma~\ref{lem:codim-vd1Pm} (ii),
  if $(d,m) \neq (3,2)$, then $\PP^{\beta_{d-1}}=\lin{v_{d-1}(\Pm)}$
  is of dimension $\geq k + m$.
\end{proof}

We end this subsection by making one more important remark on the case when $\sigma_k(v_d(\P^m)) \subsetneq \PP^{\beta_d}$ is secant defective,
which will be used in the proof of of Part (ii) of Theorem~\ref{general_m2}.

\begin{remark}[Estimate in defective cases]\label{dim_fib_defective}
  For four defective cases
  \begin{gather*}
    (k,d,m)=(7,3,4),(5,4,2),(9,4,3),(14,4,4),
  \end{gather*}
  similarly as in Proposition~\ref{prop:dim-qiF-k-1-plane}, we can have an estimation 
  \[
    \dim\lin{v_{d-1}(A)}\ge k+\delta,
  \]
  where $A = v_d^{-1}(q_1(p^{-1}(a)))  \subset \Pm$, the preimage of the entry locus of $a$, and $\delta$ is the secant defect of $\sigma_k(v_d(\Pm))$; here $A$ is $\delta$-equidimensional, the $k$ general points $x_1',\dots,x_k'\in\Pm$ are contained in $A$, and it is well known that $\delta=2$ when $(k,d,m)=(9,4,3)$ and $\delta=1$ in all the other defective cases.

  For three cases
  \begin{gather*}
    (k,d,m)=(5,4,2),(9,4,3),(14,4,4),
  \end{gather*}
  we see that $\beta_{d-1} -2m \geq k$ and
  \begin{gather*}
    v_{d-1}(A) \cap \lin{v_{d-1}(x_1'), \dots, v_{d-1}(x_k')}
    \subset \set{v_{d-1}(x_1'), \dots, v_{d-1}(x_k')}
  \end{gather*}
  by the trisecant lemma so that we may take $y' \in A$
  such that $v_{d-1}(y') \notin \lin{v_{d-1}(x_1'), \dots, v_{d-1}(x_k')}$.
  By Lemma~\ref{lem:R-cap-vPm} (ii), we get
  \begin{gather*}
    \dim_{v_{d-1}(x_1')} (R\cap v_{d-1}(\PP^m)) = 0,
  \end{gather*}
  where $R = \lin{v_{d-1}(x_1'), \dots, v_{d-1}(x_k'), v_{d-1}(y')}$.
  Thus, by the intersection argument in $\lin{v_{d-1}(A)}$ (similar to Proposition~\ref{prop:dim-qiF-k-1-plane} (i)), we derive the estimation
  $$\dim\lin{v_{d-1}(A)}\ge \dim R+\dim v_{d-1}(A)= k+\delta~.$$

  For the remaining case $(k,d,m)=(7,3,4)$,
  it holds $\beta_{d-1} -2m = k-1$, 
  and we still can claim that
  \begin{gather*}
    \dim(\lin{v_{2}(A)})\ge k+\delta=7+1=8
  \end{gather*}
  as follows.
  For the $6$-dimensional subspace $M=\lin{v_{2}(x_1'), \dots, v_{2}(x_7')} \subset \lin{v_{2}(A)}$,
  the trisecant lemma implies
  \begin{gather*}
    M\cap v_2(A) \subset M\cap v_2(\PP^4) = \set{v_{2}(x_1'), \dots, v_{2}(x_7')},
  \end{gather*}
  $0$-dimensional intersection.
  Then $\dim(\lin{v_{2}(A)}) \geq 7$
  (otherwise, we get $M=\lin{v_{2}(A)}$ so that $M\cap v_2(A)=v_2(A)$,  a contradiction).
  Suppose that
  \begin{gather*}
    \dim(\lin{v_{2}(A)})=7,
  \end{gather*}
  and set $R=\lin{v_{2}(A)}$.
  We take the irreducible
  components of the $1$-equidimensional closed set $A$ as $A = \bigcup_{j=1}^{s} A_j$.
  Note that $v_2(A_j) \subset R \cap v_2(\Pm)$.
  Since $x_1' \in A$, there is a curve $A_{j_0}$ containing $x_1'$.
  Taking $C = v_2(A_{j_0})$ and applying Lemma~\ref{lem:R-cap-vPm} (iii),
  for any $j$ with $1 \leq j \leq s$,
  we have $v_2(A_j) = v_2(\langle x_1',x_{l_j}'\rangle)$ for some $l_j=2,\dots,k$.
  It is equivalent to $A_j = \langle x_1',x_{l_j}'\rangle$, a line in $\P^4$; in particular, $x_1' \in A_j$. In the same way, $A_j$ must contain $x_1', \dots, x_7'$.
  But this is a contradiction, because these points are chosen as general $7$ points in $\P^4$. Thus, $\dim(\lin{v_{2}(A)})\ge8$.
\end{remark}

\subsection{Estimate for the linear span of tangents moving along a subsecant variety}\label{subsec_est}

First, we give the following explicit description of the embedded tangent space
$\TT_x v_d(\Pn) \subset \Pa$ to $v_d(\Pn)$ at a point $x$ in $v_d(\Pn)$ or $v_d(\Pm)$.
Note that it is related to computations of Gauss maps (see \cite{FI}).

Recall that $\mo[t]{e}$ denotes the set of monomials $f \in \CC[t_1, \dots, t_m]$ with $\deg f \leq e$.
Then $1 \in \mo[t]{e}$ as the monomial of degree $0$.
As mentioned in Remark~\ref{rem:def-y'}, as changing homogeneous coordinates
$t_0, t_1, \dots, t_m, u_1, u_2, \dots, u_{m'}$ on $\Pn$
with $m' = n-m$,
we may assume that
$\Pm$ is the zero set of $u_1, \dots, u_{m'}$. On the affine open subset $\set{t_0 \neq 0}$,
the Veronese embedding $v_d: \Pn \rightarrow \Pa$
is parameterized by monomials of $\CC[t_1, \dots, t_m, u_1, \dots, u_{m'}]$ of degree $\leq d$. So it is expressed as
\begin{gather}\label{eq:coordi-t0=1-tu}
 [\, \mo[t]{d} : u_1 \cdot \mo[t]{d-1} :\quad\cdots\quad : u_{m'} \cdot \mo[t]{d-1} : \quad*\quad \,]\,,
\end{gather}
where $u_i \cdot \mo[t]{d-1}$ means
\begin{gather*}
  \set{u_i f \mid f \in \mo[t]{d-1}} = (u_i : u_i t_1 : u_i t_2 : \dots : u_i t_m^{d-1}),
\end{gather*}
and ``$*$'' means the remaining monomials.

Let $x = v_d(x')$ with $x' \in \set{t_0 \neq 0}  \subset \Pn$.
Then $\TT_x v_d(\Pn)  \subset \Pa$
coincides with 
\begin{equation}\label{eq:expr-TTx-0}
 \textit{$n$-plane spanned by the $(n+1)$ points corresponding to the row vectors of the matrix}
 \end{equation}
 {\small
 \begin{align*}  
   &\begin{bmatrix}
      v_d
      \\      
      \partial v_d / \partial t_1
      \\
      \vdots
      \\
      \partial v_d / \partial t_m
      \\
      \partial v_d / \partial u_1
      \\
      \vdots
      \\
      \partial v_d / \partial u_{m'}
    \end{bmatrix}
     (x')  
     =
     \left[\begin{array}{cccc:ccc}
             \mo[t]{d} & u_1 \cdot \mo[t]{d-1} & \dots & u_{m'} \cdot \mo[t]{d-1} && *&
             \\      
             (\mo[t]{d})_{t_1} & u_1 \cdot (\mo[t]{d-1})_{t_1} & \dots & u_{m'} \cdot (\mo[t]{d-1})_{t_1} && *&
             \\
             \vdots & \vdots && \vdots & &\vdots&
             \\
             (\mo[t]{d})_{t_m} & u_1 \cdot (\mo[t]{d-1})_{t_m} & \dots & u_{m'} \cdot (\mo[t]{d-1})_{t_m} && *&
             \\
             \mathsf{O} & \mo[t]{d-1} & \dots & \mathsf{O} && *&
             \\
             \vdots && \ddots && &\vdots&
             \\
             \mathsf{O} & \mathsf{O} & \dots & \mo[t]{d-1} && *&
           \end{array}\right] (x')~
 \end{align*}
 }%
using (\ref{eq:coordi-t0=1-tu}), where $(\mo[t]{e})_{t_i}$ means $\set{ \partial f / \partial t_i \mid f \in \mo[t]{e}}$ 
and $\mathsf{O}$ is a zero matrix with suitable size.

In particular, in case of $x' \in \Pm = \set{u_1 = \dots = u_{m'} = 0}$, we see that the matrix is of the form
\begin{equation}\label{eq:expr-TTx}
  \left[\begin{array}{cccc:cccccccc}
    \mo[t]{d} & \mathsf{O} & \dots  & \mathsf{O} & & &&&\mathsf{O}&&&
    \\      
    (\mo[t]{d})_{t_1} & \mathsf{O} & \dots  & \mathsf{O} && &&& \mathsf{O}&&&
    \\
    \vdots & \vdots && \vdots && &&& \vdots&&&
    \\
    (\mo[t]{d})_{t_m} & \mathsf{O} & \dots & \mathsf{O}& & &&& \mathsf{O}&&&
    \\
    \mathsf{O} & \mo[t]{d-1} & \dots & \mathsf{O} & & &&&\mathsf{O}&&&
    \\
    \vdots && \ddots &&& &&&\vdots&&&
    \\
    \mathsf{O} & \mathsf{O} & \dots & \mo[t]{d-1} & &&&& \mathsf{O}&&&
  \end{array}\right] (x')~.
\end{equation}

As a consequence, we settle a key proposition which estimates a lower bound of the dimension of the linear span of moving embedded tangent spaces along a subset of a given $\Pm$.

\begin{proposition}\label{prop:dim-linTT-geq-vdAA}
  Let $v_d: \PP^n \rightarrow \Pa$ be the $d$-uple Veronese embedding.
  For an $m$-plane $\Pm  \subset \Pn$, for a (possibly reducible) subset $A  \subset \Pm$,
  and for a linear variety $\Lambda  \subset \lin{v_d(\Pm)}$,
  the dimension of the linear variety
  \[
    \lin{\Lambda  \cup \bigcup_{x \in v_d(A)}\TT_x (v_d(\Pn))}  \subset \Pa
  \]
  is greater than or equal to
  \begin{equation}\label{eq:sum-vdAA}
    \dim \lin{\Lambda  \cup v_{d}(A)} + (n-m)\big\{1 + \dim\lin{v_{d-1,m}(A)}\big\},
  \end{equation}
  where $v_{e,m}: \Pm \rightarrow \PP^{\binom{m+e}{m}-1}$ is the $e$-uple Veronese embedding of $\Pm$.
\end{proposition}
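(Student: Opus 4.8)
The plan is to read off the block structure of the affine tangent cone directly from the matrix description (\ref{eq:expr-TTx}) and then add up contributions one coordinate-block at a time. First I would fix the homogeneous coordinates $t_0,\dots,t_m,u_1,\dots,u_{m'}$ (with $m':=n-m$) as in Remark~\ref{rem:def-y'}, so that $\Pm=\set{u_1=\dots=u_{m'}=0}$, and decompose the ambient vector space $V=\CC^{N+1}$ (whose coordinates are indexed by the monomials of degree $\leq d$) according to $u$-degree:
\[
  V = V_t \oplus \bigoplus_{i=1}^{m'} V_{u_i} \oplus V_*,
\]
where $V_t$ is spanned by the monomials in $\mo[t]{d}$ (those free of $u$), each $V_{u_i}$ by the monomials $u_i\cdot\mo[t]{d-1}$, and $V_*$ by the remaining monomials (of $u$-degree $\geq 2$). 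This decomposition is intrinsic to the flag $\Pm\subset\Pn$ and is preserved by the subgroup of $\GL$ stabilizing it; after such a coordinate change I may assume any prescribed point $x'\in\Pm$ lies in $\set{t_0\neq0}$ and still invoke (\ref{eq:expr-TTx}). This disposes of the base-point issue and shows the computation below holds for \emph{every} $x'\in\Pm$, not merely a general one, so that $A$ may be an arbitrary (possibly reducible) subset.

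Next, for $x=\nu_d(x')$ with $x'\in\Pm$ I would pass to the affine cone $\widehat{\TT_x}$, spanned by the row vectors of (\ref{eq:expr-TTx}) evaluated at $x'$. Because the last column block vanishes there, $\widehat{\TT_x}\subset V_t\oplus\bigoplus_i V_{u_i}$, and it splits compatibly with the decomposition as
\[
  \widehat{\TT_x} = T^t_{x'} \oplus \bigoplus_{i=1}^{m'} T^{u_i}_{x'},
\]
where $T^t_{x'}\subset V_t$ is the affine tangent space of $\nu_{d,m}(\Pm)$ at $\nu_{d,m}(x')$ (spanned by $\hat\nu_{d,m}(x')$ and its $t$-derivatives, and in particular containing $\hat\nu_{d,m}(x')$), while $T^{u_i}_{x'}\subset V_{u_i}$ is the line spanned by the single vector $e_i\otimes\hat\nu_{d-1,m}(x')$, i.e.\ by $\hat\nu_{d-1,m}(x')$ placed in the $i$-th $u$-block. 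The key structural point is that all of these tangent cones, as $x'$ ranges over $A$, respect one and the same coordinate-block decomposition of $V$.

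Finally I would form the total affine span $\hat L:=\sum_{x'\in A}\widehat{\TT_x}$ and exploit this block-independence. Since each summand respects the decomposition and the blocks $V_t,V_{u_1},\dots,V_{u_{m'}}$ are linearly independent, the span adds up block by block:
\[
  \hat L = \Big(\sum_{x'\in A} T^t_{x'}\Big) \oplus \bigoplus_{i=1}^{m'}\Big(\sum_{x'\in A} T^{u_i}_{x'}\Big),
\]
so $\dim\hat L$ is the sum of the dimensions of the blocks. The $t$-block contains $\sum_{x'}\CC\,\hat\nu_{d,m}(x')$, the affine cone over $\lin{\nu_{d,m}(A)}$, hence has dimension $\geq\dim\lin{\nu_{d,m}(A)}+1$; each $u_i$-block equals $\sum_{x'}\CC\,(e_i\otimes\hat\nu_{d-1,m}(x'))$, which is isomorphic (forgetting the tensor factor $e_i$) to the affine cone over $\lin{\nu_{d-1,m}(A)}$ and so has dimension $\dim\lin{\nu_{d-1,m}(A)}+1$. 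Adding these over the $m'=n-m$ blocks and passing to projective dimension, $\dim\lin{\bigcup_x\TT_x}=\dim\hat L-1$, yields exactly the bound (\ref{eq:sum-vdAA}). The argument is essentially linear algebra once (\ref{eq:expr-TTx}) is in hand; the only point needing care, and the real crux of the estimate, is the block-independence that makes the spans additive, together with the coordinate-change argument extending the tangent-space formula from the open chart to all of $\Pm$.
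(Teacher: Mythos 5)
Your proof is correct and takes essentially the same route as the paper: both arguments read the block structure off the matrix~(\ref{eq:expr-TTx}), identify the $t$-block contribution with $\lin{\nu_{d,m}(A)}$ and each $u_i$-block contribution with a copy of $\lin{\nu_{d-1,m}(A)}$, and use the coordinate-disjointness of the blocks to make the spans additive (your block-by-block sum of affine cones is exactly the paper's computation of $\dim\lin{B_0, B_1, \dots, B_{m'}}$ as the sum of the $\dim\lin{B_i}$ plus $m'$). Your explicit coordinate-change argument handling points of $\Pm$ outside the chart $\set{t_0 \neq 0}$ is a careful touch the paper leaves implicit, but it does not change the method.
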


\begin{proof}
  For a given $A  \subset \Pm$, we consider
  \[
    B_0 = v_d (A),
    B_1 = (\partial v_d / \partial u_1) (A), \,\dots\,,
    B_{m'} = (\partial v_d / \partial u_{m'}) (A)
  \]
  as subsets in $\P^N$,
  where $B_i$ is embedded by the parameterization of $(m+1+i)$-th row
  of the matrix of (\ref{eq:expr-TTx-0}) for $1\le i\le m'$.
  Note that for the homogeneous coordinates $[w_0: \dots: w_N]$ on $\PN$
  corresponding to (\ref{eq:coordi-t0=1-tu}),
  $\lin{v_d(\Pm)}=\PP^{\beta=\binom{m+d}{m}-1}$ is the zero set of $w_{\beta+1}, \dots, w_N$,
  and $\Lambda  \cup B_0$ is contained in the set.

  Since $A  \subset\set{u_1 = \dots = u_{m'} = 0}$,
  it follows from (\ref{eq:expr-TTx}) that the linear variety
  \begin{equation}\label{eq:lin-BB-inPa}
    \lin{\Lambda  \cup B_0, B_1, \,\dots\,, B_{m'}}  \subset \Pa
  \end{equation}
  is of dimension $\dim(\lin{\Lambda  \cup B_0}) + \dim(\lin{B_1}) + \dots + \dim (\lin{B_{m'}}) + m'$.
  
  Again, by (\ref{eq:expr-TTx}) 
  we see that
  $B_0 \simeq v_{d}(A)$ and $B_i \simeq v_{d-1,m}(A)$ for $1 \leq i \leq m'$.
  Since the linear variety~(\ref{eq:lin-BB-inPa}) is contained in
  $\lin{\Lambda  \cup \bigcup_{x \in v_d(A)}\TT_x (v_d(\Pn))}$,
  we have the assertion.
\end{proof}

\section{Case of $m=1$}\label{sect_m=1}

\subsection{Symmetric flattening and conormal space computation}
\label{sec:symm-flatt-conorm}

For the proof of Theorem~\ref{thm_m1}, we begin with some preliminaries on equations for secant varieties and conormal space computation via known sets of equations, whereas we mainly adopt the geometric viewpoint and techniques for the $m\ge2$ case in \S\ref{sect_m>=2}. 

Let $V$ be a $(n+1)$-dimensional $\CC$-vector space $\CC\langle x_0,x_1,\dots,x_n\rangle$. Let $f\in S^d V$ be a homogeneous polynomial of degree $d$ (or $d$-\ti{form}) and $[f]$ be the corresponding point in $\PP S^d V$. In this paper, we frequently abuse notations as denoting both a $d$-form in $S^d V$ and the point in $\PP S^d V$ just by $f$. For the Veronese variety $v_d(\P V)$, we have a natural 1-1 correspondence between points of the ambient space $\lin{v_d(\P V)}$ and equivalent classes of degree $d$-forms in $S=\CC[x_0,x_1,\dots,x_n]$. First of all, let us recall some notions related to this correspondence.

Given a form $f$ of degree $d$, the minimum number of linear forms $l_{i}$ needed to write $f$ as a sum of $d$-th powers is the so-called (Waring) \textit{rank} of $f$ and we denote it by $\rank(f)$. Note that one can define $\rank([f])$ by $\rank(f)$, because this rank is invariant under non-zero scaling. The (Waring) \textit{border rank} is given by the same notion in the limiting sense. In other words, if there is a family $\{f_{\epsilon}\mid \epsilon >0 \}$ of polynomials with constant rank $r$ and $\lim_{\epsilon \to 0}f_{\epsilon} = f$, then we say that $f$ has border rank at most $r$. The minimum such $r$ is called the border rank of $f$ and we denote it again by $\brank(f)$. Note that by definition $\sigma_k(v_d(\P V))$ is the variety of homogeneous polynomials $f$ of degree $d$ with border rank $\brank(f)\le k$.

Now, we recall that some part of defining equations for $\sigma_k(v_d(\P V))$ comes from so-called \textit{symmetric flattenings}. Consider the polynomial ring $S=S^\bullet V=\CC[x_0,\ldots,x_n]$ and consider another polynomial ring $T=S^\bullet V^\ast=\CC[y_0,\ldots,y_n]$, where $V^\ast$ is the \ti{dual} $\CC$-vector space of $V$. Define the differential action of $T$ on $S$ as follows: for any $g\in T_{d-a}, f\in S_d$, we set
\begin{equation*}
 g\cdot f=g(\partial_0,\partial_1,\ldots,\partial_n)f\in S_a,
\end{equation*}
where $\partial_i=\partial/\partial{x_i}$
Let us take bases for $S_a$ and $T_{d-a}$ as
\begin{align*}
\mbf{X}^{I}=\frac{1}{i_0 !\cdots i_n !}x_0^{i_0}\cdots x_n^{i_n}&\quad\trm{and}\quad
\mbf{Y}^{J}=y_0^{j_0}\cdots y_n^{j_n},
\end{align*}
with $|I|=i_0+\cdots+i_n=a$ and $|J|=j_0+\cdots+j_n=d-a$. For a given $f=\sum_{|I|=d}b_I\cdot \mbf{X}^I$ in $S_d$, we have a linear map
\[\phi_{d-a,a}(f):T_{d-a}\to S_a,\quad g\mapsto g\cdot f\] for any $a$ with $1\le a\le d-1$, which can be represented by the following ${a+n\choose n}\times{d-a+n\choose n}$-matrix:
\begin{equation*}
\left(\begin{array}{ccc}
&&\\
&b_{I,J}&\\
&&\end{array}\right) \quad \trm{with $b_{I,J}=b_{I+J}$},
\end{equation*}
in the bases defined above. We call this `the $(d-a,a)$-symmetric flattening (or \ti{catalecticant}) matrix' of $f$. It is easy to see that the transpose $\phi_{d-a,a}(f)^{T}$ is equal to $\phi_{a,d-a}(f)$.

It is obvious that if $f$ has (border) rank 1, then any symmetric flattening $\phi_{d-a,a}(f)$ has rank 1. By subadditivity of matrix rank, we also know that $\rank~\phi_{d-a,a}(f)\le r$ if $\brank(f)\le r$. So, we obtain a set of defining equations coming from $(k+1)$-minors of the matrix $\phi_{d-a,a}(f)$ for $\sigma_k(v_d(\P V))$. For some small values of $k$, it is known that these minors are sufficient to cut the variety $\sigma_k(v_d(\P V))$ scheme-theoretically (see \cite[Theorem~3.2.1]{LO}).

Let us recall some more basic terms and facts. Let $Z \subset\P W$ be a (reduced and irreducible) variety and $\hat{Z}$ be its affine cone in $W$. Consider a (closed) point $\hat{p}\in \hat{Z}$ and say $p$ the corresponding point in $\P W$. We denote the \ti{affine tangent space to $Z$ at $p$} in $W$ by $\hat{T}_{p}Z$ and we define the  \textit{(affine) conormal space to $Z$ at $p$}, $\hat{N}^{\ast}_{p}Z$ as the annihilator $(\hat{T}_{p}Z)^{\perp} \subset W^{\ast}$.
Since $\dim \hat{N}^{\ast}_{p}Z+\dim\hat{T}_{p}Z=\dim W$ and $\dim Z\le \dim \hat{T}_{p}Z-1$, we get that 
\begin{gather}\label{dim_conormal}
  \dim \hat{N}^{\ast}_{p}Z\le \codim(Z,\P W)
\end{gather}
and the equality holds if and only if $Z$ is smooth at $p$. This conormal space is quite useful to study the (embedded) tangent space $\TT_{p} Z$.

For any given form $f\in S^d V$, we call $\partial\in T_t$ \ti{apolar} to $f$ if the differentiation $\partial\cdot f$ gives zero (i.e., $\partial\in\ker\phi_{t,d-t}(f)$). And we define the \ti{apolar ideal} $f^{\perp} \subset T$ as
\[
  f^\perp=\{\partial\in T~|~\partial\cdot f=0\}.
\]
It is straightforward to see that $f^\perp$ is indeed an ideal of $T$. Moreover, it is well-known that the quotient ring $T_f=T/f^\perp$ is an \ti{Artinian Gorenstein algebra with socle degree $d$} (see e.g. \cite[Chapter 1]{IK}). In terms of this apolar ideal, we have a useful description of (a part of) conormal space as follows:

\begin{proposition}\label{conormal_prop} Suppose that $f\in S^d V$ corresponds to a (closed) point $[f]$ of $\sigma_k(v_d(\P V)) \setminus\sigma_{k-1}(v_d(\P V))$. Then, for any $a$ with $1\le a\le{\lfloor \frac {d+1}{2}\rfloor}$ with $\rank \phi_{d-a,a}(f)=k$ we have
\begin{equation*}
\hat{N}^{\ast}_{[f]}\sigma_k(v_d(\P V))\supseteq(f^\perp)_a\cdot(f^\perp)_{d-a} 
\end{equation*}
as a subspace of $T_d=S^d V^\ast$.
\end{proposition}

\begin{proof}
Let $X \subset \P W$ be any variety. For any linear embedding $W\hookrightarrow A\otimes B$ and the induced embedding $X \subset \P W\hookrightarrow \P(A\otimes B)$, it is well-known that for any $[f]\in \sigma_k(X) \subset\P(A\otimes B)$, considering $\sigma_k(X)$ as a subvariety of $\P(A\otimes B)$, we have 
\[
  \hat{N}^{\ast}_{[f]}\sigma_k(X)\supseteq \ker(f)\otimes\im(f)^\perp=\hat{N}^{\ast}_{[f]}\sigma_p(Seg(\P A\times \P B))
\]
in $A^\ast \otimes B^\ast$ provided that $X \subseteq\sigma_p(Seg(\P A\times \P B))$, $X\nsubseteq\sigma_{p-1}(Seg(\P A\times \P B))$ and $f$ has rank $k\cdot p$ as a linear map in $\Hom(A^\ast,B)$ (see e.g. \cite[section 2.5]{LO}). Here, $Seg(\P A\times \P B)$ means the Segre variety in $\P(A\otimes B)$.

Further, since $X \subset \P W  \subset \P(A\otimes B)$, then as a subvariety of $\P W$ it holds that 
$$\hat{N}^{\ast}_{[f]}\sigma_k(X)\supseteq \pi(\ker(f)\otimes\im(f)^\perp)=\hat{N}^{\ast}_{[f]}(\sigma_p(Seg(\P A\times \P B))\cap \P W)~,$$
where $\pi:A^\ast \otimes B^\ast \to W^\ast$ is the dual map of the given inclusion $W\hookrightarrow A\otimes B$.

The assertion is immediate when we apply this fact to a partial polarization $S^d V \hookrightarrow S^a V\otimes S^{d-a}V$, because $X=v_d(\P V)$ is contained in $Seg(\P S^a V\times \P S^{d-a}V) \subset \P(S^a V\otimes S^{d-a}V)$ (i.e., $p=1$ case) and $\rank \phi_{d-a,a}(f)=k$, $\ker\phi_{d-a,a}(f)=(f^\perp)_{d-a}$, $\im(\phi_{d-a,a}(f))^\perp=(f^\perp)_{a}$.
\end{proof}

\subsection{Proof of Theorem~\ref{thm_m1}}

Now we study singularity and non-singularity of the subsecant variety
$\sigma_k(v_d(\PP^1))  \subset \sigma_k(v_d(\Pn))$
in each range of $k,d$ as Theorem~\ref{thm_m1}.

\begin{proof}[Proof of Theorem~\ref{thm_m1}] (i) Let $f$ be \ti{any} form belonging to $\sigma_k(v_d(\P^1)) \setminus\sigma_{k-1}(v_d(\P^n))$. Set $X=v_d(\P^n)  \subset \PN$, the Veronese variety.
  Consider $f$ as a polynomial in $\CC[x_0,x_1]$ as in \S\ref{sec:symm-flatt-conorm}. Then, by \cite[Theorem~1.44]{IK} we know that $T/f^\perp$ is an Artinian Gorenstein algebra with socle degree $d$ and that $f^\perp$ is a complete intersection of two homogeneous polynomials $F, G$ of each degree $a$ and $b$ $(a\le b)$ with $a+b=d+2$ \ti{as an ideal of $\CC[y_0,y_1]$}, where the Hilbert function of $T/f^\perp$ is
  \begin{gather}\label{H-shape of f}
    (1,2,\ldots, a-1,a,\ldots, a, a-1,\ldots,2,1).
  \end{gather}
  We claim that $\rank~\phi_{k,d-k}(f)=k$ (i.e., $a=k$). If $a<k$, then by the shape (\ref{H-shape of f}) we see that $\rank~\phi_{t,d-t}(f)<k$ for all $t$. In particular, all $k$-minors of $\phi_{t,d-t}(f)$ vanish for any $t$. Since the $k$-minors of catalecticant $\phi_{t,d-t}$ for each $k-1\le t\le d-(k-1)$ give the ideal of $\sigma_{k-1}(v_d(\P^1))$ (e.g. [Ibid., Theorem 1.45]), this implies $f\in \sigma_{k-1}(v_d(\P^1)) \subset\sigma_{k-1}(v_d(\P^n))$, which is a contradiction.

  Hence, we have that $f^\perp=\big(F,G,y_2,\ldots,y_n\big)$ as an ideal in $T=\CC[y_0,y_1,\ldots,y_n]$ for some polynomial $F$ of degree $k$ and $G$ of degree $(d-k+2)$ in $\CC[y_0,y_1]$.

  Now, let us show that $\sigma_k(X)$ is smooth at $f$ by computing the dimension of conormal space. In general, by (\ref{dim_conormal}) we have
  \begin{equation}\label{upp_bd_cor}
    {n+d\choose d}-kn-k\ge\dim_\CC \hat{N}^{\ast}_{[f]}\sigma_k(X),
  \end{equation}
  where the left hand side is given by the expected codimension of the $k$-th secant variety. By Proposition~\ref{conormal_prop}, we also have
  \begin{equation}\label{low_bd_cor}
    \dim_\CC\hat{N}^{\ast}_{[f]}\sigma_k(X)\ge\dim_\CC(f^\perp)_{k}\cdot(f^\perp)_{d-k}\,.
  \end{equation}
  Thus, $f$ is a smooth point of $\sigma_k(X)$ if the lower bound for the dimension of conormal space in (\ref{low_bd_cor}) is equal to the expected codimension in (\ref{upp_bd_cor}).

  Since $k\le\frac{d+1}{2}$ by the assumption, note that $d-k\ge k$ unless $d$ is odd and $k=\frac{d+1}{2}$, where $d-k=\frac{d-1}{2}<k$. 

  a) If $d$ is odd and $k=\frac{d+1}{2}$, then we have
  \begin{align*}
    (f^\perp)_{k}\cdot(f^\perp)_{d-k}&=\big(F, y_2,\ldots,y_n\big)_k\cdot\big(y_2,\ldots,y_n\big)_{d-k}\\
                                     &=\bigg\langle\big(\{y_i y_j~|~2\le i,j\le n\}\big)_d  \cupdot F\cdot\{y_2,\ldots,y_n\}\cdot\{y_0^{d-k-1},y_0^{d-k-2}y_1,\ldots,y_1^{d-k-1}\}\bigg\rangle \\
                                     &=\CC[y_0,y_1,\ldots,y_n]_d \setminus\bigg(\{y_0^{d},y_0^{d-1}y_1,\ldots,y_1^{d}\} \cupdot\{y_2,\ldots,y_n\}\cdot\{y_0^{d-1},\ldots,y_1^{d-1}\} \bigg)\\
                                     &\qquad \cupdot F\cdot\{y_2,\ldots,y_n\}\cdot\{y_0^{d-k-1},y_0^{d-k-2}y_1,\ldots,y_1^{d-k-1}\},
  \end{align*}
  where $ \cupdot$ means the `disjoint union' of sets of forms of degree $d$.

  So, we obtain
  \begin{align*}
    \dim \hat{N}^{\ast}_{[f]}\sigma_k(X)&\ge\dim_\CC (f^\perp)_{k}\cdot(f^\perp)_{d-k}\\
                                        &={n+d\choose d}-(d+1)-d(n-1)+(n-1)(d-k) \quad(\trm{note that $k=\frac{d+1}{2}$})\\
                                        &={n+d\choose d}-kn-k,
  \end{align*}
  which tells us that $\sigma_k(X)$ is smooth at $f$.

  b) When $d$ is odd and $k<\frac{d+1}{2}$ or $d$ is even,  we have $k\le d-k$ and
  \begin{align*}
    (f^\perp)_{k}\cdot(f^\perp)_{d-k}&=\big(F, y_2,\ldots,y_n\big)_k\cdot\big(F,y_2,\ldots,y_n\big)_{d-k}\\
                                     &=\bigg\langle\big(\{y_i y_j~|~2\le i,j\le n\}\big)_d  \cupdot F\cdot\{y_2,\ldots,y_n\}\cdot\{y_0^{d-k-1},y_0^{d-k-2}y_1,\ldots,y_1^{d-k-1}\}\\
                                     & \qquad \cupdot F^2\cdot\{y_0^{d-2k},y_0^{d-2k-1}y_1,\ldots,y_1^{d-2k}\}\bigg\rangle.
  \end{align*}
  Thus, by a similar dimension counting as case a), we see that
  \begin{align*}
    \dim \hat{N}^{\ast}_{[f]}\sigma_k(X)&\ge{n+d\choose d}-(d+1)-d(n-1)+(n-1)(d-k)+(d-2k+1) \\
                                        &={n+d\choose d}-kn-k,
  \end{align*}
  which coincides with the expected codimension as desired. Thus, $f$ is a smooth point of $\sigma_k(X)$.
  \medskip

  (ii) First note that $\dim \sigma_k(v_d(\PP^1)) = \min\set{2k-1, d}$ and the incidence $I$ has dimension $2k-1$. In the case $d \leq 2k-2$, each fiber of $p : I \rightarrow \PP^d$ is of dimension $\geq 1$, so for a general $a\in\sigma_k(v_d(\PP^1))$, it holds $q_i(p^{-1}(a)) = v_d(\PP^1)$ for some $i$ with $1 \leq i \leq k$ in the incidence (\ref{incidence_on_Z}) in \S\ref{sec:degen-subv-small}.

  Now, let $n\ge3, k=3$ or $n\ge2, k\ge4$ and $d = 2k-2$.
  Suppose $\sigma_k(v_d(\PP^1))  \not\subset \Sing(\sigma_k(v_d(\Pn)))$. Then a general point $a \in \sigma_k(v_d(\PP^1)) = \PP^d$ is a smooth point of $\sigma_k(v_d(\Pn))$. Since $q_i(p^{-1}(a)) = v_d(\PP^1)$ for some $i$, 
  it follows from Lemma~\ref{lem:TTx-in-TTa} that
  for $M=\TT_a \sigma_k(v_d(\Pn))$, we have
  the inclusion $\TT_x (v_d(\Pn))  \subset M$ for a general $x \in v_d(\PP^1)$,
  and then the inclusion holds for any $x \in v_d(\PP^1)$.
  This is because, for the Gauss map $\gamma: v_d(\Pn) \rightarrow \GG(n, \PN)$
  sending $\gamma(z) = \TT_z (v_d(\Pn))$ (it is a morphism since $v_d(\Pn)$ is smooth),
  considering the closed set
  $G_M = \set{W \in \GG(n, \PN) \mid W \subset M}$,
  we have $\gamma(U) \subset G_M$ for a certain non-empty open subset
  $U \subset v_d(\PP^1)$, and then
  $\gamma(v_d(\PP^1)) \subset G_M$. Therefore,
  \begin{equation}\label{containment_m=1}
    \lin{\bigcup_{x \in v_d(\PP^1)}\TT_x (v_d(\Pn))}  \subset \TT_a \sigma_k(v_d(\Pn)).
  \end{equation}
  Taking $m = 1$, $\Lambda=\emptyset$, and $A = \PP^1$ in Proposition~\ref{prop:dim-linTT-geq-vdAA},
  the number (\ref{eq:sum-vdAA}), a lower bound for dimension of left hand side of (\ref{containment_m=1}), is equal to $dn$. Thus we have
  \[
    (2k-2)n = dn \leq k(n+1) -1 \; \big(\!=\dim\TT_a \sigma_k(v_d(\Pn))\big),
  \]
  which is equivalent to the formula $n \leq (k-1)/(k-2)$.
  It follows that $n \leq 2$ if $k=3$, and $n = 1$ if $k \geq 4$,
  contrary to our assumption.
  
  Finally, since $\sigma_{k-1}(v_d(\PP^1)) \subsetneq\sigma_k(v_d(\PP^1))$ when $d \geq 2k-2$ (note that $\dim \sigma_{k-1}(v_d(\PP^1)) = 2k-3 < d$), the $\sigma_k(v_d(\PP^1))$ is a non-trivial singular locus of $\sigma_k(v_d(\Pn))$, which means that $\sigma_k(v_d(\PP^1))  \not\subset \sigma_{k-1}(v_d(\Pn))$, by Lemma~\ref{lem:sigma-k-1}.
  \medskip

  (iii) By assumption,
  $\dim \sigma_{k-1}(v_d(\PP^1)) = \min\set{2k-3, d} = d$, that is to say,
  \begin{gather*}
    \sigma_{k-1}(v_d(\PP^1)) = \sigma_{k}(v_d(\PP^1))= \lin{v_d(\PP^1)} = \PP^d;
  \end{gather*}
  hence the assertion follows.
  \medskip

  (iv) For $(n,k)=(2,3)$, smoothness of all points in
  $\sigma_3(v_d(\P^1)) \setminus\sigma_{2}(v_d(\PP^2))$
  for $d\ge4$ was already proved in \cite[Theorem~2.14]{H18}.
  This is included for completeness to the statement.
\end{proof}

\begin{remark}\label{m=1_exception_why}
  (iv) is the exception to the trichotomy in Theorem~\ref{thm_m1}.
  Under the condition $(k,d,m,n) = (3,4,1,2)$ of (iv),
  the arithmetic deduced from the inclusion assumption (\ref{containment_m=1}) of moving tangents in the proof does not make any contradiction.
  The situation is similar in the other exceptional case to the trichotomy, $(k,d,m,n) = (4,3,2,3)$ (Part (iv) of Theorem~\ref{general_m}).
\end{remark}

\section{Proof of main results}\label{sect_m>=2}

In this section, we prove Theorem~\ref{general_m} and Theorem~\ref{general_m2}.
We will first discuss the non-singularity result and then the results for the singular loci.

\subsection{Generic smoothness}

We begin with a lemma which deals with
a secant fiber of a general point in an $m$-subsecant variety $v_d(\Pm)$ in $v_d(\Pn) \subset \PN$.

\begin{lemma}\label{lem:large-d-incidence:1}
  Assume
  $\dim \sigma_k(v_d(\Pn)) = {nk + k - 1}$ and
  $\dim \sigma_k(v_d(\Pm)) = {mk + k - 1}$ (i.e., $v_d(\Pn)$ and $v_d(\Pm)$ are non-defective). Let $k \leq \tbinom{m+d-1}{m}$.
  Fix $L=\Pm \subset \Pn$ to be an $m$-plane,
  and take $a \in \sigma_k(v_d(L))$ to be a general point.
  Then, in the incidence $\II \subset \PN \times (\Pn)^k$
  with the first projection $p: I \rightarrow \PN$
  as (\ref{main_incidence}) in \S\ref{sect_prep}, we have the following inclusion scheme-theoretically,
  \begin{gather*}
    p^{-1}(a)  \subset \set{a} \times \Lk.
  \end{gather*}
\end{lemma}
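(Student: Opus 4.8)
The plan is to prove first the set-theoretic inclusion $\left|p^{-1}(a)\right| \subset \set{a} \times L^{(k)}$ for general $a$, and then to upgrade it to a scheme-theoretic statement by a local computation on the smooth incidence $I^0$. The first step is to confine the fibre to the open part: I claim $p^{-1}(a) \subset I^0$ for general $a \in \sigma_k(\nu_d(L))$. Indeed, if $(a, x_1', \dots, x_k') \in p^{-1}(a) \cap (I \setminus I^0)$, then $\dim \lin{x_1, \dots, x_k} < k-1$ by Remark~\ref{rem:large-d-incidence:1}(a), and projecting through $\hat{\pi_2} \times (\hat{\pi_1})^k$ as in Lemma~\ref{lem-I-I0} produces $(a, y_1', \dots, y_k') \in J \setminus J^0$ with $y_i' \in L$ and $p_J(a, y_1', \dots, y_k') = a$. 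Since the $\nu_d(y_i')$ are then linearly dependent, so are the $\nu_{d-1}(y_i')$ (for distinct linear forms, independence of the $\ell_i^{d-1}$ implies independence of the $\ell_i^{d}$, by differentiating a dependence relation once; contrapositively, dependence in degree $d$ forces dependence in degree $d-1$). Hence $(a, y_i') \in J' := J \cap (\Pb \times S)$, where $S$ is the analogue for $\Pm$ of the locus in Remark~\ref{rem:large-d-incidence:1}(b), so $a \in p_J(J')$. But the hypotheses $k < \tbinom{m+d-1}{m}$ and $\dim \sigma_k(\nu_d(\Pm)) = mk+k-1$ let me apply Remark~\ref{rem:large-d-incidence:1}(b) to $\Pm$, giving $p_J(J') \neq \sigma_k(\nu_d(\Pm))$; thus a general $a$ is excluded.

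On $I^0$ the argument becomes explicit. Take $(a, x_1', \dots, x_k') \in p^{-1}(a) \cap I^0$, so $a = \sum_i c_i \nu_d(x_i')$ with the $\nu_d(x_i')$ independent, and apply $\hat{\pi_2}$; using $\hat{\pi_2}(a)=a$ and $\hat{\pi_2}(\nu_d(x_i')) = \nu_d(\pi_1(x_i'))$ gives $a = \sum_i c_i \nu_d(y_i')$ with $y_i' = \pi_1(x_i') \in L$. By Lemma~\ref{lem:sigma-k-1}, a general $a \in \sigma_k(\nu_d(L))$ avoids $\sigma_{k-1}(\nu_d(\Pn))$, which forces every $c_i \ne 0$ (and forces no $x_i'$ to meet the centre of $\pi_1$); moreover the argument of the first paragraph shows $(a, y_i') \in J^0$ with the $\nu_{d-1}(y_i')$ independent. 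Writing $\ell_{x_i} = \ell_{y_i} + h_i$ with $h_i$ supported on the $u$-variables, the degree-one-in-$u$ part of $\sum_i c_i \ell_{x_i}^d = f$ reads $\sum_i c_i \ell_{y_i}^{d-1} h_i = 0$; comparing coefficients of each $u_l$ and invoking independence of the $\ell_{y_i}^{d-1}$ together with $c_i \ne 0$ forces $h_i = 0$, that is $x_i' = y_i' \in L$.

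The main obstacle is the \emph{scheme-theoretic} upgrade, since $a$ may be a singular point of $\sigma_k(\nu_d(\Pn))$ and the fibre need not be reduced; I would therefore work locally on the smooth locus $I^0$, where the coordinates $s_{il} := u_l(x_i')$ are regular functions cutting out $\set{a} \times L^{(k)}$ and vanishing at the base point $\xi$. Pulling back the linear coordinate on $\PN$ dual to the monomial $u_l \mathbf{t}^\alpha$ (which vanishes at $a$), one sees that the function $d\sum_i c_i\, s_{il}\,(\ell_{y_i}^{d-1})_\alpha + (\text{order} \ge 2 \text{ in the } s)$ lies in the ideal of $p^{-1}(a)$ for every $\alpha$ with $|\alpha| = d-1$. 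The Jacobian of these functions in the $s$-directions at $\xi$ is $d\,[\,c_j\,(\ell_{y_j}^{d-1})_\alpha\,]_{\alpha, j}$, whose rank equals $k$ precisely because the $\nu_{d-1}(y_j')$ are independent and the $c_j$ are nonzero; hence each $s_{il}$ is congruent modulo its own square-ideal to an element of the fibre ideal, and Nakayama's lemma yields $s_{il} \in I_{p^{-1}(a)}$ locally. This gives $p^{-1}(a) \subset \set{a} \times L^{(k)}$ scheme-theoretically. I expect the delicate points to be the bookkeeping of this catalecticant/Jacobian rank computation and the verification that a single genericity assumption on $a$ simultaneously secures the non-vanishing of the $c_i$, the independence of the $\nu_{d-1}(y_i')$, and the emptiness of $p^{-1}(a) \cap (I \setminus I^0)$.
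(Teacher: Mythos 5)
Your proposal is correct and takes essentially the same route as the paper's own proof: confine the fibre to $I^0$ by the projection argument of Lemma~\ref{lem-I-I0} together with the genericity statements of Remark~\ref{rem:large-d-incidence:1}~(b), deduce $c_i\neq 0$ from Lemma~\ref{lem:sigma-k-1}, and then kill the $u$-coordinates of the $x_i'$ by extracting the degree-one-in-$u$ coefficients of $a=\sum_i c_i\nu_d(x_i')$ and invoking the linear independence of the $\nu_{d-1}(y_i')$, which is exactly where $k<\tbinom{m+d-1}{m}$ enters. Your third paragraph simply spells out, via the locally solvable linear system in the functions $s_{il}$, the scheme-theoretic upgrade that the paper asserts in one sentence (the vanishing equations are linear with coefficient matrix of full rank along the fibre, hence lie in and generate the right ideal), so the two arguments agree in substance.
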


\begin{proof}

  (i)
  Consider any $(a, x_1', \dots, x_k') \in p^{-1}(a) \subset \II$.
  Let $\IIm \subset \PN \times (L)^k$ be the another incidence as in Lemma~\ref{lem-I-I0}.
  Since $a \in \sigma_{k}(v_d(L))$ is general,
  it follows $a \notin \sigma_{k-1}(v_d(L))$ and $a \notin p(\IIm  \setminus \IIm^0)$ by Remark~\ref{rem:large-d-incidence:1} (b).
  Since $\dim \sigma_k(v_d(L)) = {mk + k - 1}$, the secant fiber of $\IIm \rightarrow \PN$ at $a$ also consists of finite points. So, by Lemma~\ref{lem-I-I0}, we have
  $(a, x_1', \dots, x_k') \in \II^0$.
  From Lemma~\ref{lem:sigma-k-1}, it is also true that
  $a \notin \sigma_{k-1}(v_d(\Pn))$.
  Thus, we may write $a = \sum_{i=1}^k c_i x_i$ for some $c_i \in \CC$ as regarding $a$ and $x_i = v_d(x_i')$ as vectors in the affine space $\CC^{N+1}$,
  where
  $c_i \neq 0$ for all $1 \leq i \leq k$.

  As in Remark~\ref{rem:def-y'},
  let us set $y_i' = [x_{i, 0}': \dots : x_{i, m}': 0 : \dots : 0]$.
  For $y_i = v_d(y_i')$,
  the diagram~(\ref{eq:diag-PnPmPNPb}) implies $a = \sum_{i=1}^k c_i y_i$,
  where $y_i' \neq 0$ for $1 \leq i \leq k$.
  For the affine open subset $V_0 = \set{t_0 \neq 0} \subset \Pn$,
  we may assume $x_i' \in V_0$ for all $i$.
  Since $v_{d}: \Pn \rightarrow \PP^{N}$
  is parameterized on $V_0$ by $\mo[t,u]{d}$, and $a \in \lin{v_d(L)}$, it holds that
  \begin{gather*}
    0 = \sum_{i=1}^k c_i \cdot \{(u_{1} \cdot \mo[t]{d-1}) (x_i')\}
    = \sum_{i=1}^k c_i \cdot x_{i,m+1}'\cdot \{\mo[t]{d-1}(y_i')\},
  \end{gather*}
  where
  for $Mono = u_{1} \cdot \mo[t]{d-1}, \mo[t]{d-1}$ and $pt = x_i', y_i'$,
  the symbol $\{Mono(pt)\}$ means the vector obtained by evaluating monomials in $Mono$ at the value of $pt$.
  
  Since $k \leq \tbinom{m+d-1}{m}$,
  applying Remark~\ref{rem:large-d-incidence:1} (b) to $\sigma_k(v_d(L))$,
  we may assume
  \begin{gather}\label{eq:dim-d-1-indep}
    \dim\lin{v_{d-1}(y_1'), \dots, v_{d-1}(y_k')} = k-1,
  \end{gather}
  which gives $c_i \cdot x_{i,m+1}' = 0$,
  thus $x_{i,m+1}' = 0$ for all $1 \leq i \leq k$
  (more precisely, the linear independence of (\ref{eq:dim-d-1-indep})
  means a $k$-minor of the corresponding matrix is non-zero,
  and $c_i \cdot x_{i,m+1}' = 0$ is obtained by multiplying the inverse of the $k \times k$ submatrix).
  Similarly, we can obtain $x_{i,j}' = 0$ for each $j > m$ and for all $1 \leq i \leq k$, which gives the linear defining equations for $\Lk$ in $\Pnk$.
  Hence, $x_1', \dots, x_k' \in L$.
  \medskip

  (ii)
  Let $U \subset \Pnk$ be the open subset used in Remark~\ref{rem:large-d-incidence:1},
  where $I^0$ is the $\PP^{k-1}$-bundle over $U$.
  We define a morphism $\Phi: \PP^{k-1} \times U \rightarrow \PN \times U$ by
  \begin{gather*}
    \Phi((c_1: \dots: c_k), (x_1', \dots, x_k')) = \Bigl(\,\sum_{i=1}^k c_i v_d(x_i'),\,(x_1', \dots, x_k')\Bigr).
  \end{gather*}
  Note that, by the linear independence of $v_d(x_1'), \dots, v_d(x_k')$
  for $(x_1', \dots, x_k') \in U$,
  $\sum_{i=1}^k c_i v_d(x_i') = \sum_{i=1}^k \tilde{c}_i v_d(x_i') \in \PN$
  if and only if
  $(c_1: \dots: c_k) = (\tilde c_1: \dots: \tilde c_k) \in \PP^{k-1}$.
  Then $\Phi(\PP^{k-1} \times U) = I^0$, and moreover, we have the isomorphism
  $\PP^{k-1} \times U \simeq I^0$ under $\Phi$.

  Let $U_0 = U \cap (V_0)^{k} \subset \Pnk$, where $(V_0)^{k}$ is an affine variety
  and its affine coordinates ring is $A=\CC[\{x_{i,j}'\}]$.
  In addition,
  for each $k$-minor $\xi$ of the matrix whose $i$-th column consists
  of monomials of $m$ variables $x_{i,1}', \dots, x_{i,m}'$ of degrees $\leq d-1$,
  we set $(V_0)^k_{\xi} = \set{\xi \neq 0}$,
  an open subset of $(V_0)^k$ whose coordinates ring is $A_{\xi}$.
  Let $W \subset \set{c_1 \neq 0} \subset \PP^{k-1}$ be the affine open subset such that
  all the coordinates $c_1, \dots, c_k$ are nonzero,
  where the coordinates ring of $W$ is $\CC[c_2, \dots, c_k]_{c_2\dotsm c_k}$
  by regarding $c_1 = 1$.

  We may assume $p^{-1}(a) \subset I^0 \cap (\set{a} \times (V_0)^k)$.
  To consider the scheme-theoretic structure of $p^{-1}(a)$,
  for the composite morphism $\Phi_1 = p \circ \Phi: \PP^{k-1} \times U_0 \rightarrow \PN$,
  we take the fiber
  \begin{gather*}
    \Phi_1^{-1}(a) \subset \PP^{k-1} \times U_0 \subset \PP^{k-1} \times (V_0)^{k}.
  \end{gather*}
  Since $a \notin \sigma_{k-1}(v_d(\Pn))$,
  $\Phi_1^{-1}(a) \subset W \times (V_0)^{k}$.
  Since $a$ is general in $\sigma_{k}(v_d(L))$,
  and by (\ref{eq:dim-d-1-indep}),
  $\Phi_1^{-1}(a)$ is contained in
  the union of affine open subsets $W \times (V_0)^k_{\xi}$ with all $k$-minors $\xi$.

  We take $F_{\xi} = \Phi_1^{-1}(a) \cap (W \times (V_0)^k_{\xi})$ for each $\xi$,
  and consider the ideal
  $I(F_{\xi})$ in $A_{\xi}[c_2, \dots, c_k]_{c_2\dotsm c_k}$,
  the affine coordinates ring of $W \times (V_0)^k_{\xi}$.
  For $\beta = \binom{m+d}{m}-1$,
  we may write
  \begin{gather*}
    a = (1:a^{(1)}:\dots:a^{(\beta)}:0:\dots:0) \in \lin{v_d(L)} \subset \PN
  \end{gather*}
  with $a^{(1)}, \dots, a^{(\beta)} \in \CC$ and $a^{(\ell)} = 0$ if $\ell > \beta$.
  Then the expression
  $a = \sum_{i=1}^k c_i \cdot v_d(x_i')$
  means that $a^{(\ell)} \sum_{i=1}^k c_i \cdot v_d(x_i')^{(0)} - \sum_{i=1}^k c_i \cdot v_d(x_i')^{(\ell)} \in {I(F_{\xi})}$ for $1 \leq \ell \leq N$,
  where $v_d(x_i')^{(\ell)}$ is the $\ell$-th coordinate of $v_d(x_i') \in \PN$.
  In particular,
  $\sum_{i=1}^k c_i \cdot v_d(x_i')^{(\ell)} \in {I(F_{\xi})}$ for $\ell > \beta$.
  Using the discussion of (i),
  we have $x_{i,j}' \in I(F_{\xi})$
  for all $1 \leq i \leq k$ and $j > m$,
  which means that $I(F_{\xi})$ contains the defining ideal of $(\PP^{k-1} \times (L)^k) \cap (W \times (V_0)^k_{\xi})$.
  Thus, scheme-theoretically, it follows
  $F_{\xi} \subset \PP^{k-1} \times (U_0 \cap (L)^k)$
  for any $\xi$,
  and hence $\Phi_1^{-1}(a) \subset \PP^{k-1} \times (U_0 \cap (L)^k)$.
  Therefore,
  $p^{-1}(a) \subset \set{a} \times (L)^k$.
\end{proof}

\begin{remark}\label{rem:COV2-1} We recall some known results on the $k$-the secant variety and its incidence in terms of $k$-fold symmetric product of $\Pn$.
\begin{enumerate}[(a)]
\item It is known that $\Symm^k(\Pn)$ is non-singular at $(x_1', \dots, x_k')$ if $x_i' \neq x_j'$ whenever $i\neq j$. Thus, the subset of all distinct $k$-points of $\Pn$ is a smooth open subscheme of $\Symm^k(\Pn)$ (see e.g. \cite[Lemma 7.1.4]{BK05}).
Then we also consider the incidence variety in this setting as
\begin{gather*}
  \widetilde{I}  \subset \PP^{N} \times \Symm^k(\Pn),
\end{gather*}
where $\widetilde{I}$ corresponds to $I$ in (\ref{main_incidence}) under the natural map $\PP^{N} \times \Pnk\rightarrow \PP^{N} \times \Symm^k(\Pn)$ and 
$\widetilde{p}: \widetilde{I} \rightarrow \sigma_k(v_d(\Pn))  \subset \PP^{N}$
is the first projection.
\item Assume $k(n+1) < \binom{n+d}{n}$.
  Then, we know from \cite[Theorem~1.1]{COV2} that the projection $\widetilde{p}: \widetilde{I} \rightarrow \sigma_k(v_d(\Pn))$ is birational except $(k,d,n)=(9,6,2),(8,4,3),(9,3,5)$, because it is a dominant and generically injective morphism.
\end{enumerate}
\end{remark}

Now, we are ready to prove Part (i) of Theorem~\ref{general_m} and Theorem~\ref{general_m2}.

\begin{proof}[Proof of Part (i) of Theorem~\ref{general_m} and Part (i) of Theorem~\ref{general_m2}] 
  For an $m$-plane $\Pm \subset \Pn$ with $m \geq 2$, we take
  the $m$-subsecant variety $Z=\sigma_k(v_d(\Pm))$ of $Y=\sigma_k(v_d(\Pn))$.
  From \cite{AH}, for $d \geq 3$,
  $Z$ does not fill $\lin{Z}$ and is secant defective
  if and only if $(k,d,m) = (7,3,4),(5,4,2),(9,4,3),(14,4,4)$.
  Thus, by the assumptions of Theorems \ref{general_m} and \ref{general_m2},
  we know that
  \begin{gather*}
    \dim Y = {nk + k - 1}, \quad 
    \dim Z = {mk + k - 1} \le \dim \lin{Z}=\tbinom{m+d}{m}-1,
  \end{gather*}
  that is, $Y, Z$ are non-defective.
  In this case,
  $Z = \lin{Z}$ if and only if $k=\frac{\binom{m+d}{m}}{m+1} \in \NN$.
  In particular, under the assumption $k < \mu$ of Theorem~\ref{general_m} (i),
  we have $Z \subsetneq \lin{Z}$.

  If $Z \subsetneq \lin{Z}$,
  then
  since $(k,d,m)=(9,3,5),(8,4,3),(9,6,2)$ are excluded
  from Theorem~\ref{general_m} and (i) of Table~\ref{singTable1},
  it follows from \cite[Theorem~1.1]{COV2} that
  $Z$ is generically identifiable.
  If $Z = \lin{Z}$, then $(k,d,m) = (5,3,3),(7,5,2)$ of (i) of Table~\ref{singTable1} only occur,
  and in these cases, it follows from \cite[Theorem~1]{GM}
  that $Z$ is generically identifiable.

  Let $a \in Z$ be a general point and consider  $\widetilde{p}: \widetilde{I} \rightarrow Y$.
  Note that $k \leq \tbinom{m+d-1}{m}$ for each $(k, d, m)$ of our range,
  which is an assumption of Lemma~\ref{lem:large-d-incidence:1}.
  Applying Lemma~\ref{lem:large-d-incidence:1}, Remark~\ref{rem:COV2-1} (b), and the generic identifiability of $Z$,
  we may assume that the scheme-theoretic fiber $\widetilde{p}^{-1}(a)$ is one point $\mathbf{x}=(a,x_1',\dots,x_k') \in \widetilde{I}\cap (\PP^{N} \times\Symm^k(L))$ and $\mathbf{x}$ is a non-singular point in $\widetilde{I}$, because $\mathbf{x}$ is contained in a smooth Zariski open subset of $\widetilde{I}$ (i.e., a projective bundle over a smooth open base, see Remarks~\ref{rem:large-d-incidence:1} (a) and \ref{rem:COV2-1} (a)). 

  Now, we restrict the projective morphism $\widetilde{p}: \widetilde{I} \rightarrow Y$ onto a non-empty affine open neighborhood $a\in W=\Spec A \subset Y$ and another open subset $\mathbf{x}\in U=\Spec B \subset \widetilde{I}$, and take the injective ring homomorphism $A\hookrightarrow B$ corresponding to $\widetilde{p}|_{U}:U\to W$. Also, let $m_a$ (resp. $m_{\mathbf{x}}$) be the maximal ideal of $a$ in $A$ (resp. of $\mathbf{x}$ in $B$). Note that we may take $U$ so that $\widetilde{p}|_{U}:U\to W$ is a finite morphism (cf. \cite[Chapter II, Ex.3.22 (d)]{H77}). 

Since $A/m_a = B/m_{\mathbf{x}}\simeq\CC$ and
\begin{gather*}
  \widetilde{p}^{-1}(a)\simeq \Spec(B\otimes_{A} A/m_a)\simeq\Spec(B/m_aB)
\end{gather*}
is isomorphic to one simple point $\Spec{B/m_{\mathbf{x}}}$, we have $m_aB=m_{\mathbf{x}}$ in $B$.
Let $B_{m_a} = B \otimes_A A_{m_a}$, whose member can be expressed as
$b/s$ with $b \in B$ and $s \in A \setminus m_a$.
We have
$m_a B_{m_a}=m_{\mathbf{x}} B_{m_a}$ in $B_{m_a}$, and then,
\begin{gather*}
  (A_{m_a}+m_a B_{m_a})/m_a B_{m_a}\simeq A_{m_a}/m_a A_{m_a}\simeq B_{m_a}/m_a B_{m_a},
\end{gather*}
which implies
$A_{m_a}+m_a B_{m_a} = B_{m_a} + m_a B_{m_a} =B_{m_a}$
as $A_{m_a}$-module.
By Nakayama lemma (see e.g. \cite[Corollary of Theorem 2.2]{Mat}), it follows $A_{m_a}=B_{m_a}$.
In particular, $B_{m_a}$ is a local ring, whose maximal ideal is $m_{\mathbf{x}}B_{m_a}$.
Thus, we have
\[
  A_{m_a}=B_{m_a}
  = (B_{m_a})_{m_{\mathbf{x}}B_{m_a}}
  = B_{m_{\mathbf{x}}},
\]
which implies that $a$ is a smooth point in $Y$.
\end{proof}

We present an example which shows that one can not extend this generic smoothness result to arbitrary point in the locus
$\sigma_k(v_d(\P^m)) \setminus\sigma_{k-1}(v_d(\Pn))$.

\begin{example}[Singularity can occur at a special point in Part (i) of Theorem~\ref{general_m2}]\label{sing_special_pt} 
  Let
  \begin{gather*}
    V=\CC\langle x,y,z,w\rangle\supset W=\CC\langle x,y,z\rangle
  \end{gather*}
  and $f=x^2y^2+z^4$ be a form of degree $4$. Then, $f$ represents a point in $\sigma_4(v_4(\PP W)) \setminus\sigma_3(v_4(\PP V))$. Note that $\rank\phi_{2,2}(f) =4>3$ where $\phi_{a,d-a}:S^d V\to S^a V\otimes S^{d-a}V$ is the symmetric flattening. Theorem~\ref{general_m2} (i) shows that a \ti{general} form in $\sigma_4(v_4(\PP W)) \setminus\sigma_3(v_4(\PP V))$ is a smooth point. But, here we show that $f$ is a singular point of $\sigma_4(v_4(\PP V))$. We know that the form $f_D=x^2 y^2$ has Waring rank 3 so that $f_D=\ell_1^4+\ell_2^4+\ell_3^4$ for some $\ell_i\in\CC[x,y]_1$. By \cite[Theorem~2.1]{H18} $f_D$ is also a singular point of $\sigma_3(v_4(\PP V))$. Since $f\in\langle f_D, z^4\rangle$, by Terracini's lemma, we see that $\TT_{z^4} v_4(\PP V) \subset\TT_{f}\sigma_{4}(v_4(\PP V))$ and $\TT_{\ell_i} v_4(\PP V) \subset\TT_{f}\sigma_{4}(v_4(\PP V))$ for any $i$. Further, because $\sigma_3(v_4(\PP^1))=\langle v_4(\PP^1)\rangle$ and $f_D$ has 1-dimensional secant fiber in its incidence, one can move $\ell_i$ along this $\PP^1$. Thus, we have
  \begin{equation}\label{eg:x2y2+z4}
    \TT_{f}\sigma_{4}(v_4(\PP V))\supseteq\langle \bigcup_{\ell_i\in\PP^1}\TT_{\ell_i^4} v_4(\PP V), \TT_{z^4} v_4(\PP V)\rangle.
  \end{equation}
  Note that using the parameterization (\ref{eq:coordi-t0=1-tu}) we can estimate the dimension of right hand side of (\ref{eg:x2y2+z4}). Take an affine open subset $\{[1:t:u_1:u_2]\}$ of $\PP^3$ and (with a change of coordinates) let $z^4$ be $[1:0:1:1]$ and $\ell_i\in\PP^1$ be represented by $[1:t:0:0]$ for $t\in\CC$. Then, by (\ref{eq:expr-TTx-0}) the embedded tangent space to $v_4(\PP V)$ at $[1:t:u_1:u_2]$ is given as the row span of
  \[
    \Small
    \begin{bmatrix}[ccccc:cccc:cccc:ccccccc]
      1 & t&t^2&t^3&t^4& u_1 & u_1 t &u_1 t^2 & u_1 t^3 & u_2 & u_2 t &u_2 t^2 & u_2 t^3 & u_1^2 &u_1^2 t&\cdots &u_1^3 &\cdots&u_2^2&\cdots 
      \\      
        & 1 &2t&3t^2&4t^3&   & u_1  &2 u_1 t & 3u_1 t^2 &  & u_2  &2u_2 t & 3u_2 t^2 &  &u_1^2 &\cdots & &\cdots&&\cdots  \\
        &  &&&& 1  & t  &t^2 & t^3 &  &  & &  &  2u_1&2u_1 t &\cdots & 3u_1^2&\cdots&&\cdots  \\
        &  &&&& &  &  & &1  & t  &t^2 & t^3   &  & &\cdots & &\cdots &2u_2&\cdots 
    \end{bmatrix}.
  \]
  On $[1:t:0:0]$ ($\forall t\in\CC$), this matrix turns into the shape as
  \[
    \begin{bmatrix}[ccccc:cccc:cccc:ccccccc]
      1 & t&t^2&t^3&t^4&  &  & &  &  &  & & & & & &&& 
      \\      
        & 1 &2t&3t^2&4t^3&   & &&&  &  &&  &  && &\mathsf{O} &&&  \\
        &  &&&& 1  & t  &t^2 & t^3 &  &  & &  &  & & & &&&  \\
        &  &&&& &  &  & &1  & t  &t^2 & t^3   &  & & & & &&
    \end{bmatrix}
  \]
  and at $[1:0:1:1]$ it is equal to 
  \[
    \begin{bmatrix}[ccccc:cccc:cccc:ccccccc]
      1 & &&&& 1 & & &  & 1 &  & &  & 1 && &1 &\cdots&1&\cdots 
      \\      
        & 1 &&&&   & 1  & &  &  & 1  & &  &  &1 & & &\cdots&&\cdots  \\
        &  &&&& 1  &   && &  &  & &  &  2& & & 3&\cdots&&\cdots  \\
        &  &&&& &  &  & &1  &  &&    &  & & & &\cdots &2&\cdots 
    \end{bmatrix}~,
  \]
  which shows that
  $\dim\langle \bigcup_{\ell_i\in\PP^1} \TT_{\ell_i^4} v_4(\PP V) \rangle\ge12$,
  $\dim \TT_{z^4} v_4(\PP V)=3$
  and $\langle \bigcup_{\ell_i\in\PP^1}\TT_{\ell_i^4} v_4(\PP V) \rangle\cap\TT_{z^4} v_4(\PP V)=\emptyset$.
  Thus, by (\ref{eg:x2y2+z4}) we obtain $\dim\TT_{f}\sigma_{4}(v_4(\PP V))\ge16=12+3+1$, greater than the expect dimension. Hence $f$ is a \ti{singular} point of $\sigma_{4}(v_4(\PP V))$, whereas $\sigma_{4}(v_4(\PP V))$ is smooth at a general point of $\sigma_{4}(v_4(\PP W))$. 
\end{example}

\subsection{Singularity}

In this subsection we will prove parts (ii) and (iii) both in Theorem~\ref{general_m} and Theorem~\ref{general_m2}, which show the \emph{singularity} of the $m$-subsecant loci $\SSig[n]{k,d}{m}$ in the $k$-th secant variety $\sigma_k(v_d(\Pn))$. 
Since $\SSig[n]{k,d}{m}$ is the union of all the $m$-subsecant varieties $\sigma_k(v_d(\Pm))$'s in $\sigma_k(v_d(\Pn))$ as (\ref{eq:def-m-subsec}),
it is enough to prove the statements for any
$\sigma_k(v_d(\Pm)) \subset\sigma_k(v_d(\Pn))$. 

\begin{proof}[Proof of Part (ii) of Theorem~\ref{general_m} and Part (ii) of Theorem~\ref{general_m2}]

  As we noted above, it is enough here to show that $\sigma_k(v_d(\Pm))  \subset \Sing(\sigma_k(v_d(\Pn)))$ and $\sigma_k(v_d(\Pm))  \not\subset \sigma_{k-1}(v_d(\Pn))$ for
  each $m$-subsecant variety $\sigma_k(v_d(\Pm)) \subset\sigma_k(v_d(\Pn))$.

  We will first prove that $\sigma_k(v_d(\Pm))  \subset \Sing(\sigma_k(v_d(\Pn)))$
  under the condition in Theorem~\ref{general_m} with $\frac{\binom{m+d}{m}}{m+1} \notin \NN$,
  next for Theorem~\ref{general_m2} with $(k,d,m) \neq (9,3,5),(8,4,3),(9,6,2)$,
  and finally for $\frac{\binom{m+d}{m}}{m+1} \in \NN$ or $(k,d,m)=(9,3,5),(8,4,3),(9,6,2)$.
  Basically, we use the same idea for the proof, but a detailed way of estimation will be slightly different according to each case (due to secant defectivity and non-identifiability). The non-triviality of the singular locus, i.e., $\sigma_k(v_d(\Pm))  \not\subset \sigma_{k-1}(v_d(\Pn))$ can be directly obtained at the end by Lemma~\ref{lem:sigma-k-1}.
  \medskip
  
  Let us take a general point $(a,x_1, \dots, x_k)$ in the incidence $J$ as (\ref{incidence_on_Z}) for $Z=v_d(\Pm) \subset\Pb$ and take an irreducible component $F$ of $p^{-1}(a)$ containing $(a,x_1, \dots, x_k)$. Then, $a \in \sigma_k(v_d(\Pm))$ is a general (so, smooth) point in $\sigma_k(v_d(\Pm))$. 
  
  Suppose $\sigma_k(v_d(\Pm))  \not\subset \Sing(\sigma_k(v_d(\Pn)))$. Then, we may assume that $a$ is also a smooth point in $\sigma_k(v_d(\Pn))$. In particular, we have
  \begin{gather*}
    \TT_a (\sigma_k(v_d(\Pm))) \subset\TT_a (\sigma_k(v_d(\P^n))).
  \end{gather*}
  Terracini's lemma implies $\TT_{x_i}v_d(\Pn)  \subset \TT_{a} \sigma_k(v_d(\Pn))$
  for $i=1, \dots, k$, and Lemma~\ref{lem:TTx-in-TTa} implies
  $\TT_x (v_d(\Pn))  \subset \TT_a (\sigma_k(v_d(\Pn)))$
  for a general point $x \in q_i(F)$. Thus, we have
  \begin{equation}\label{containment_large_m}
    \bigl\langle\;
    \TT_a (\sigma_k(v_d(\Pm)))
    \,\cup\bigcup_{x \in q_i(F)  \cup \set{x_1, \dots, x_k}}
    \TT_x (v_d(\Pn))
    \;\bigr\rangle
    \,\subset\, \TT_a (\sigma_k(v_d(\Pn))).
  \end{equation}

  First of all, let us consider (ii) of Theorem~\ref{general_m} with
  $\frac{\binom{m+d}{m}}{m+1} \notin \NN$.
  Set $k=\Bigr\lceil \frac{\binom{m+d}{m}}{m+1} \Bigl\rceil$.
  Then $\beta = \binom{m+d}{m}-1 < km+k-1$ and $(k-1)m+k \leq \binom{m+d}{m}$ as in
  Remark~\ref{rem:cond-k-appear}.
  We have $\Pb=\TT_a (\sigma_k(v_d(\Pm)))$ since $\sigma_k(v_d(\Pm))$ fills up the whole $\Pb$.
  It is enough to discuss
  the following three cases
  \begin{enumerate}[\quad ({a}1)]
  \item $(k-1)m+k < \binom{m+d}{m}$,
  \item $(k-1)m+k = \binom{m+d}{m}$ and $(d,m) \neq (3,2)$,
  \item $(k-1)m+k = \binom{m+d}{m}$ and $(d,m) = (3,2)$. 
  \end{enumerate}

  For the case (a1) (i.e., $(k-1)m+k < \binom{m+d}{m}$),
  we take $A = v_d^{-1}(q_i(F)  \cup \set{x_1, \dots, x_k})$ in $\Pm$ and $\Lambda=\Pb$.
  From Proposition~\ref{prop:dim-qiF-k-1-plane} (i),
  it follows
  $\dim \lin{v_{d-1,m}(A)} \geq k + (km+k-1) - \beta$ for the ($d-1$)-uple Veronese embedding
  $v_{d-1,m}$ of $\Pm$.
  From Proposition~\ref{prop:dim-linTT-geq-vdAA},
  the dimension of the left hand side in (\ref{containment_large_m}) is greater than or equal to
  the number~(\ref{eq:sum-vdAA}), which is
  \begin{gather*}
    \dim \lin{\Lambda  \cup v_{d}(A)} + (n-m)\big\{1 + \dim\lin{v_{d-1,m}(A)}\big\}
    \geq 
    \beta + (n-m)(1+k+(km+k-1)-\beta).
  \end{gather*}
  From the inclusion (\ref{containment_large_m}), we obtain
  \[
    \beta + (n-m)(1+k+(km+k-1)-\beta) \leq kn+k-1,
  \]
  which implies $(n-m)(1+(km+k-1)-\beta) \leq (km+k-1)-\beta$.
  It is a contradiction, because $n > m$ and $(km+k-1)-\beta > 0$.

  Now, assume $(k-1)m+k = \binom{m+d}{m}$ (equivalently, $km+k-1-\beta=m$).
  Then, in the same way as above,
  using Proposition~\ref{prop:dim-qiF-k-1-plane} (ii) and
  taking $A = \Pm=v_d^{-1}(Z)$ and $\Lambda=\lin{v_d(\Pm)}=\Pb$,
  we have
  \begin{equation}\label{eq:fib-dim-m-sum-vdAA}
    \beta + (n-m)(1+\dim\lin{v_{d-1,m}(\Pm)}) \leq kn+k-1.
  \end{equation}
  For $(d,m) \neq (3,2)$ (i.e., the case of (a2)),
  Proposition~\ref{prop:dim-qiF-k-1-plane} implies that $\dim \lin{v_{d-1,m}(\Pm)} \geq k + m$.
  Then
  $(n-m)(m+1) \leq km+k-1-\beta=m$, contrary to $n > m$.

  For $(d,m) = (3,2)$ (i.e., the case (a3)),
  we get $\beta = 9$, $\dim\lin{v_{d-1,m}(\Pm)} = 5$, and $k = \lceil {\binom{m+d}{m}}/({m+1}) \rceil = 4$. 
  The condition $(k,d,n) \neq (4,3,3)$ implies $n \geq 4$.
  Then we also have a contradiction since (\ref{eq:fib-dim-m-sum-vdAA}) does not hold.
  Hence we show that $\sigma_k(v_d(\P^m))  \subset \Sing(\sigma_k(v_d(\Pn)))$.
  \medskip
  
  Secondly, let us regard (ii) of Theorem~\ref{general_m2}.
  For $(k,d,m)=(10,3,5), (10,6,2)$,
  we have the same result as Theorem~\ref{general_m}
  since $\sigma_k(v_d(\Pm)) = \Pb$ and it satisfies (a1),(a2) respectively.
  Then, except $(k,d,m)=(9,3,5),(8,4,3),(9,6,2)$,
  the remaining part of (ii) of Theorem~\ref{general_m2} consists of the following two cases
  \begin{enumerate}[\quad ({b}1)]
  \item 
    $(k,d,m)=(7,3,4),(5,4,2),(9,4,3),(14,4,4)$ (i.e., the case of $\sigma_k(v_d(\Pm))$ being \ti{defective}),

  \item 
    $(k,d,m)=(8,3,4),(6,4,2),(10,4,3),(15,4,4)$ (i.e., \ti{just after the defective case}).
  \end{enumerate}

  By the same reason, we also have the inclusion (\ref{containment_large_m}) for these cases provided that $\sigma_k(v_d(\Pm))  \not\subset \Sing(\sigma_k(v_d(\Pn)))$.
  
  For the case (b1), i.e., the defective case, it is known that all the $\sigma_k(v_d(\Pm))$'s are hypersurfaces in $\Pb$ (see \cite{AH}). So, as taking $A = v_d^{-1}(q_i(p^{-1}(a)))  \subset \Pm$ corresponding to the entry locus of $a$, by Proposition~\ref{prop:dim-linTT-geq-vdAA} a inclusion of the same kind as (\ref{containment_large_m}) implies
  \begin{equation}\label{containment_defect}
    \beta-1 + (n-m)(1+\dim\lin{v_{d-1}(A)}) \leq kn+k-1,
  \end{equation}
  where $\beta$ is equal to $km+k-\delta$ and $\delta$ is the secant defect of $\sigma_k(v_d(\Pm))$. (\ref{containment_defect}) is equivalent to
  \[
    n-m\le\frac{\delta}{1+\dim\lin{v_{d-1}(A)}-k}
    \le \frac{\delta}{1+\delta}<1,
  \]
  which contradicts $n-m \ge 1$,
  because $\dim\lin{v_{d-1}(A)}\ge k+\delta$ by Remark~\ref{dim_fib_defective}.
  
  For the case (b2), i.e., {just after the defective case} (b1),
  the $k$-th secant variety $\sigma_k(v_d(\Pm))$ fills up $\Pb$,
  and hence $\TT_a (\sigma_k(v_d(\Pm)))=\Pb$.
  Then, we can also get a contradiction in a similar way, as follows.
  Since the $(k-1)$-secant variety $\sigma_{k-1}(v_d(\Pm))$ is a hypersurface in $\Pb$,
  by Lemma~\ref{lem:q_1F=Z}
  we have $q_i(F)=v_d(\Pm)$ for an irreducible component $F$ of $p^{-1}(a)$
  for general $a \in \Pb$
  so that we can take $A=\Pm$.
  By Proposition~\ref{prop:dim-linTT-geq-vdAA}, the inclusion (\ref{containment_large_m}) implies
  \begin{equation*}
    \beta + (n-m)(1+\dim\lin{v_{d-1}(\Pm)})= \binom{m+d}{m}-1+(n-m)\binom{m+d-1}{m}\leq kn+k-1,
  \end{equation*}
  which fails to hold in (b2);
  more precisely,
  for $(k,d,m)=(8,3,4),(6,4,2),(10,4,3),(15,4,4)$,
  the value $\binom{m+d}{m}-1+(n-m)\binom{m+d-1}{m} - (kn+k-1)$ is equal to
  \begin{gather*}
    7n - 33,4n - 11,10n - 35,20n - 85,
  \end{gather*}
  respectively, which must be greater than $0$ because of the condition $n\ge m+1$.
  Thus, we obtain $\sigma_k(v_d(\P^m))  \subset \Sing(\sigma_k(v_d(\Pn)))$. 
  \medskip
  
  Now, we discuss the following two cases
  \begin{enumerate}[\quad ({c}1)]
  \item 
    $k=\frac{\binom{m+d}{m}}{m+1} \in \NN$ of Theorem~\ref{general_m};
    then, since we exclude $(k,d,m) = (5,3,3),(7,5,2)$,
    a general point $a \in \Pb = \sigma_k(v_d(\Pm))$ is \emph{not $k$-identifiable} and
    the secant fiber $p^{-1}(a)$ consists of two or more points (see \cite[Theorem~1]{GM}),

  \item 
    $(k,d,m)=(9,3,5),(8,4,3),(9,6,2)$ of Theorem~\ref{general_m2};
    then, a general point $a \in \sigma_k(v_d(\Pm)) \subsetneq \Pb$ is \emph{not $k$-identifiable} and
    $p^{-1}(a)$ consists of two points (see \cite[Theorem~1.1]{COV2}).
  \end{enumerate}

  In these cases, even though they do not have positive dimensional secant fibers,
  we can still get a proof by contradiction using a different estimate, as follows.

  Similarly, suppose that $\sigma_k(v_d(\Pm)) \not\subset \Sing(\sigma_k(v_d(\Pn)))$ and take a general point $a\in\sigma_k(v_d(\Pm))$ so that $a$ is a smooth point in both $\sigma_k(v_d(\Pm))$ and $\sigma_k(v_d(\P^n))$.
  We take general $k$ points $x_1,\dots , x_k \in v_d(\Pm)$ with
  $a\in\lin{x_1,\dots ,x_k}$.
  By the non-identifiability,
  we have another set of $k$ points $y_1,\dots, y_k \in v_d(\Pm)$
  with $a\in\lin{y_1,\dots ,y_k}$
  such that 
  $(a, x_1,\dots , x_k)$ and $(a, y_1,\dots, y_k)$ are \emph{distinct} in the secant fiber $p^{-1}(a) \subset I \subset \Pb \times (v_d(\Pm))^k$ (modulo permutation on $(v_d(\Pm))^k$).
  Let $x_i'\in\P^m$ (resp. $y_j'$) be the preimage of $x_i$,
  that is $v_d(x_i')=x_i$ (resp. of $y_j$ with $v_d(y_j')=y_j$). 

  Setting $A=\{x_1',\dots,x_k',y_1',\dots,y_k'\}$, we have an inclusion, similar to (\ref{containment_large_m}),
  \begin{equation}\label{containment_3cases}
    \bigl\langle\;
    \TT_a (\sigma_k(v_d(\P^m)))
    \,\cup\bigcup_{x \in v_d(A)}
    \TT_x (v_d(\Pn))
    \;\bigr\rangle
    \,\subset\, \TT_a (\sigma_k(v_d(\Pn))).
  \end{equation}

  For the $(d-1)$-uple Veronese embedding
  $v_{d-1}=v_{d-1,m}: \Pm \hookrightarrow \PP^{\beta_{d-1}}$ with
  $\beta_{d-1} = \binom{m+d-1}{m}-1$,
  we have $\dim \lin{v_{d-1}(x_1'),\ldots,v_{d-1}(x_k')} = k-1$
  since $x_1',\dots,x_k'$ are general in $\P^m$.
  The $(k-1)$-plane $\lin{v_{d-1}(x_1'),\ldots,v_{d-1}(x_k')}$ is contained in $\lin{v_{d-1,m}(A)}$.
  On the other hand, the codimension of $v_{d-1}(\Pm) \subset \PP^{\beta_{d-1}}$,
  that is $\binom{m+d-1}{m}-1 -m$, is greater than or equal to $k$;
  it follows from (iii) of Lemma~\ref{lem:codim-vd1Pm} in the case (c1),
  and from explicit calculations in the case (c2).

  Then, we have $\dim\lin{v_{d-1,m}(A)}\ge k$, as follows.
  Otherwise, $\dim\lin{v_{d-1,m}(A)}\le k-1$ implies
  $\lin{v_{d-1,m}(A)}=\lin{v_{d-1}(x_1'),\ldots,v_{d-1}(x_k')}$.
  Since $y_1', \dots, y_k' \in A \subset \Pm$, it follows
  \begin{gather*}
    v_{d-1}(y_1'), \dots, v_{d-1}(y_k')
    \in \lin{v_{d-1}(x_1'),\ldots,v_{d-1}(x_k')} \cap v_{d-1}(\Pm),
  \end{gather*}
  where the right hand side must be $\{v_{d-1}(x_1'),\ldots,v_{d-1}(x_k')\}$
  because of the generalized trisecant lemma (\cite[Proposition~1.4.3]{Rus}),
  which gives a contradiction.

  Again by (\ref{containment_3cases}) and Proposition~\ref{prop:dim-linTT-geq-vdAA}, we get
  \begin{align*}
kn+k-1&\ge\dim\langle\TT_a \sigma_k(v_d(\P^m))  \cup v_d(A)\big\rangle+(n-m)\{1+\dim\langle v_{d-1,m}(A)\rangle\}\\
&\ge \dim\TT_a \sigma_k(v_d(\P^m))+(n-m)(1+k)=km+k-1+(n-m)(1+k)\\
&=kn+k+(n-m-1),
   \end{align*}
   which is a contradiction since $n-m-1\ge0$. Thus, it also holds that $\sigma_k(v_d(\P^m))  \subset \Sing(\sigma_k(v_d(\Pn)))$ in these generic non-identifiable cases.

   Note that for $(k,d,m)=(10,3,5),(9,4,3),(10,6,2)$, i.e.,
   just after the non-identifiable case (c2),
   the singularity is already shown in the second part of this proof,
   where $(k,d,m)=(9,4,3)$ is also in the defective case (b1).
   The case $(k,d,m) = (5,3,3),(7,5,2)$, which is excluded from (c1),
   belongs to (i) of Theorem~\ref{general_m2};
   in this sense, the non-trivial singularity does not appear for $(d,m)=(3,3),(5,2)$.
   \medskip

   Finally, since $\sigma_{k-1}(v_d(\Pm)) \subsetneq\sigma_k(v_d(\Pm))$ for the $k$'s of the range in this Part (ii), the $\sigma_k(v_d(\Pm))$ is a non-trivial singular locus, which means $\sigma_k(v_d(\Pm))  \not\subset \sigma_{k-1}(v_d(\Pn))$, by Lemma~\ref{lem:sigma-k-1}.
\end{proof}

We finish this section as proving Part (iii) of Theorems \ref{general_m} and \ref{general_m2} and Part (iv) of Theorem~\ref{general_m}.

\begin{proof}[Proof of Part (iii) of Theorem~\ref{general_m} and Part (iii) of Theorem~\ref{general_m2}] 
  By the conditions in the Part (iii) of these two theorems, we see that
  \begin{gather*}
    k-1\ge \Bigl\lceil\tfrac{{m+d\choose m}}{m+1}\Bigr\rceil
  \end{gather*}
  if $\sigma_k(v_d(\Pm))$ is never defective, or
  \begin{gather*}
    k-1\ge \Bigl\lceil\tfrac{{m+d\choose m}}{m+1}\Bigr\rceil+1
  \end{gather*}
  else if $(d,m) \in \{(3,4),(4,2),(4,3),(4,4)\}$, the defective list of Alexander-Hiroschowitz.
  In any case, we have $\sigma_{k-1}(v_d(\Pm)) =  \lin{v_d(\Pm)}$. Hence,
  \begin{gather*}
    \sigma_{k}(v_d(\Pm)) = \sigma_{k-1}(v_d(\Pm))  \subset \sigma_{k-1}(v_d(\Pn))
  \end{gather*}
  and the assertion follows.
\end{proof}

\begin{proof}[Proof of Part (iv) of Theorem~\ref{general_m}]
  It is shown in \cite{FH3}
  by giving the defining equations of $\sigma_4(v_3(\P^3))$ explicitly.
\end{proof}

\section{Case of 4-th secant variety of Veronese embedding}\label{Sing4Vero}

In this section we aim to prove Theorem~\ref{sing_4th_Vero} as an investigation of the singular loci of the \ti{fourth} secant variety (i.e., $k=4$) of any Veronese variety. This theorem consists of one part dealing with the (non-)singularity of points in full-secant loci (i.e., $m=3$) and the other part for points in the maximum subsecant loci $\SSigM[n]{4,d}$. So, we will obtain Theorem~\ref{sing_4th_Vero} by proving Theorem~\ref{sing_4th_Vero_gen} (Part (i) of Theorem~\ref{sing_4th_Vero}) and Corollary \ref{sing_4th_Vero_sub} (Part (ii) and (iii) of Theorem~\ref{sing_4th_Vero}). 

\subsection{Equations by Young flattening}

In \cite{LO} another source of equations for secant varieties of Veronese varieties was introduced via the so-called \ti{Young flattening}. Here we briefly review the construction of a certain type of Young flattening and use it to compute the conormal space of a given form. 

Let $V = \CC^{n+1}$ and $d = d_1+d_2+1$. For $1\le a\le n$, we consider a map
\begin{gather*}
  \YF^a_{d_1, d_2, n}: S^d V \rightarrow
  S^{d_1}V \otimes S^{d_2}V \otimes \bigwedge^{a}V^\ast \otimes \bigwedge^{a+1} V
\end{gather*}
which is obtained by first embedding $S^d V \hookrightarrow
S^{d_1}V \otimes S^{d_2}V \otimes V$ via co-multiplication, then tensoring with $\mathrm{Id}\in \bigwedge^{a}V \otimes \bigwedge^{a} V^\ast$, and finally skew-symmetrizing and permuting.

For any $f\in S^d V$, we identify $\YF^a_{d_1, d_2, n}(f) \in S^{d_1}V \otimes S^{d_2}V \otimes \bigwedge^{a}V^\ast \otimes \bigwedge^{a+1} V$
as a linear map
\begin{equation}\label{YF_map}
  S^{d_1}V^* \otimes \bigwedge^{a}V \rightarrow S^{d_2}V \otimes \bigwedge^{a+1} V.
\end{equation}
Let $\alpha_1, \dots, \alpha_{\binom{n+1}{a}}$ give a basis of $\bigwedge^{a}V$.
For a decomposable $w^d\in S^d V$, $\YF^a_{d_1, d_2, n}$ maps as
\[
  w^d \mapsto \frac{d!}{d_1! d_2!}w^{d_1} \otimes w^{d_2} \otimes \big(\sum_I \alpha_I^\ast \otimes (\alpha_I \wedge w)\big)
\]
and if we take $z_0, \dots, z_n$, a basis of $V$ (now, $w = \sum c_jz_j \in V$ for some $c_j$'s and $\alpha_I=z_{i_1}\wedge\cdots\wedge z_{i_a}$ for some distinct $i_1,\ldots,i_a$), then we have
\[
  \YF^a_{d_1, d_2, n}(w^d) = 
  {\small\frac{d!}{d_1! d_2!}}\sum^n_{j=0} c_j \sum_{i_1, \dots, i_a \neq j} w^{d_1}
  \otimes w^{d_2}  \otimes (z_{i_1}\wedge \dots \wedge z_{i_a})^\ast\otimes (z_{i_1}\wedge \dots \wedge z_{i_a} \wedge z_j),
\]
which shows $\YF^a_{d_1, d_2, n}(w^d)$ has rank $\binom{n}{a}$ as the linear map (note that the rank does not depend on the choice of $w$ and just consider the case $w=z_0$). Further, for $k \leq \binom{n+d'}{d'}$ with $d' = min\set{d_1, d_2}$ it is also immediate to see that $\rank (\YF^a_{d_1, d_2, n}(f)) = k\binom{n}{a}$ for a general $k$ sum of $d$-th power $f = \sum_{i=1}^k w_i^d$.

Thus, from $k\binom{n}{a}+1$ minors of the matrix $\YF^a_{d_1, d_2, n}(f)$ we obtain a set of equations for $\sigma_k(v_d(\PP V))$ for this range of $k$ (for some values of $k,d,d',a$, it is known that these minors cut $\sigma_k(v_d(\P V))$ as an irreducible component (see \cite[Theorem~1.2.3]{LO})).

We can also use this Young flattening to compute conormal space of secant varieties of Veronese.

\begin{proposition}\label{conormal_YF}
Let $V=\CC^{n+1}$ and $f$ be any (closed) point of $\sigma_k(v_d(\P V)) \setminus\sigma_{k-1}(v_d(\P V))$ in $\P S^d V$. Suppose $\YF_{d_1,d_2,n}^a(f)$ has rank $k{n\choose a}$ as a linear map in $\Hom(S^{d_1}V^\ast \otimes \bigwedge^{a}V, S^{d_2}V \otimes \bigwedge^{a+1} V)$. Then, we have
\begin{equation*}
\hat{N}^{\ast}_{[f]}\sigma_k(v_d(\P V))\supseteq (\ker \YF_{d_1,d_2,n}^a(f))\cdot (\im\YF_{d_1,d_2,n}^a(f))^\perp~, 
\end{equation*}
where the right hand side is to be understood as the image of the multiplication
\begin{gather*}
  S^{d_1}V^\ast \otimes \bigwedge^{a}V\otimes S^{d_2}V^\ast \otimes \bigwedge^{a+1} V^\ast\to S^{d}V^\ast.
\end{gather*}
\end{proposition}

\begin{proof}
  This proposition follows directly from the same idea as Proposition~\ref{conormal_prop} by applying it to a linear embedding
  \begin{gather*}
    S^d V \hookrightarrow
    S^{d_1}V \otimes \bigwedge^{a}V^\ast \otimes S^{d_2}V \otimes \bigwedge^{a+1} V.
  \end{gather*}
  Since $\rank\YF_{d_1,d_2,n}^a(f)=k{n\choose a}$ and, as observed before,
  $v_d(\P V)$ is contained in
  \begin{gather*}
    \sigma_{n\choose a}(Seg(\P(S^{d_1}V \otimes \bigwedge^{a}V^\ast)\times \P(S^{d_2}V \otimes \bigwedge^{a+1} V)))
    \subset \P(S^{d_1}V \otimes \bigwedge^{a}V^\ast\otimes S^{d_2}V \otimes \bigwedge^{a+1} V)
  \end{gather*}
  and not in the previous secants of the same Segre variety, it is straightforward from the proof of Proposition~\ref{conormal_prop} (i.e., the case $p={n\choose a}$).
\end{proof}

\subsection{Singularity and non-singularity}

Using Proposition~\ref{conormal_YF},
we have the non-singularity of $\sigma_4(v_d(\Pn))$ at any point outside $\SSig[n]{4,d}{2} \cup\sigma_3(v_d(\Pn))$.

\begin{theorem}[from full-secant locus]\label{sing_4th_Vero_gen}
  Let $v_d:\P^n \rightarrow \P^N$
  be the $d$-uple Veronese embedding
  with $n\ge3$, $d\ge3$, and $N={n+d\choose d}-1$.
  Suppose that $f\in \sigma_4(v_d(\P^n)) \setminus \sigma_3(v_d(\P^n))$ and $f$ does not belong to any $2$-subsecant $\sigma_4(v_d(\P^2))$ of $\sigma_4(v_d(\P^n))$.
  Then $\sigma_4(v_d(\P^n))$ is smooth at every such $f$.
\end{theorem}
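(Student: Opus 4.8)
The plan is to prove smoothness at $f$ by a conormal space computation, matching the dimension of $\hat{N}^{\ast}_{[f]}\sigma_4(\nu_d(\P^n))$ with the expected codimension. As recorded in the discussion preceding Proposition~\ref{conormal_prop}, one always has $\dim_\CC \hat{N}^{\ast}_{[f]}\sigma_4(\nu_d(\P^n)) \le \codim(\sigma_4(\nu_d(\P^n)),\P^N) = \binom{n+d}{d}-4n-4$, with equality exactly when $\sigma_4(\nu_d(\P^n))$ is smooth at $f$. So it suffices to produce a \emph{lower} bound on the conormal dimension equal to $\binom{n+d}{d}-4n-4$, holding uniformly at every qualifying $f$. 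Note that the general-point result (Part (i) of Theorems~\ref{general_m} and \ref{general_m2}) does not apply here, since the full-secant case $m=\min\{k-1,n\}$ is excluded there and, moreover, we need \emph{every} such point rather than a general one; this is precisely what forces the flattening approach, via Proposition~\ref{conormal_YF}.

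First I would pin down the structure of $f$. Since $f\notin\sigma_3(\nu_d(\P^n))$ its border rank is exactly $4$, so the first catalecticant $\phi_{1,d-1}(f)$ has rank at most $4$, and hence the minimal linear subspace $\P^m$ with $f\in\lin{\nu_d(\P^m)}$ satisfies $m\le 3$. A minimal span of dimension $m\le 2$ would place $f$ on a $2$-subsecant (for $m\le 1$ any such span sits inside a $\P^2$), which is excluded by hypothesis; therefore $\rank\phi_{1,d-1}(f)=4$ and $m=3$. After a change of coordinates I may thus regard $f$ as a form genuinely involving the four variables $x_0,\dots,x_3$, of border rank $4$, whose apolar ideal cuts out a length-$4$ Gorenstein scheme spanning $\P^3$.

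Next I would fix a Young flattening $\YF^a_{d_1,d_2,n}$ — for concreteness $a=1$ with a balanced splitting $d_1+d_2=d-1$, for which $\binom{n+\min\{d_1,d_2\}}{\min\{d_1,d_2\}}\ge 4$ holds throughout the range $n\ge3,\,d\ge3$ — and verify the rank hypothesis $\rank\YF^1_{d_1,d_2,n}(f)=4\binom{n}{1}=4n$ required by Proposition~\ref{conormal_YF}. On a general point of $\sigma_4$ this rank equals $4n$; the real content is that it does \emph{not} drop at our (possibly boundary) $f$. This is exactly where both hypotheses enter: a rank drop would force the underlying length-$4$ scheme to degenerate either into $\sigma_3$ or onto a plane $\P^2$, i.e. onto a $2$-subsecant, both of which are ruled out. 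I would establish the non-degeneracy by analyzing the block form of $\YF^1(f)$ dictated by $f$ being supported on $x_0,\dots,x_3$, together with the apolarity description of the length-$4$ scheme spanning $\P^3$.

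Finally, Proposition~\ref{conormal_YF} yields
\[
  \hat{N}^{\ast}_{[f]}\sigma_4(\nu_d(\P^n)) \supseteq \bigl(\ker\YF^1_{d_1,d_2,n}(f)\bigr)\cdot\bigl(\im\YF^1_{d_1,d_2,n}(f)\bigr)^{\perp},
\]
and it remains to show that this product of subspaces of $T_d=S^d V^\ast$ has dimension at least $\binom{n+d}{d}-4n-4$. The dimensions of the two factors are fixed once the rank equals $4n$, so the point is to control the collapse of the multiplication map; I would do this by exhibiting an explicit spanning family of products, exploiting the four-variable structure of $f^\perp$ and the generality forced by the non-subsecant hypothesis. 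I expect the main obstacle to be precisely this last dimension estimate carried out \emph{uniformly}: one must rule out every degeneration of the length-$4$ scheme that keeps $f$ off $\sigma_3$ and off every $2$-subsecant yet would lower either the flattening rank or the dimension of the product, which is the delicate interplay between the apolar structure in four variables and the skew-symmetrization in the Young flattening.
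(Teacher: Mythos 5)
Your high-level strategy is the same as the paper's: prove smoothness at every qualifying $f$ by bounding $\dim\hat{N}^{\ast}_{[f]}\sigma_4(\nu_d(\P^n))$ from below by the expected codimension, using the Young flattening and Proposition~\ref{conormal_YF}, after first using $\phi_{1,d-1}(f)$ to place $f$ minimally inside a $\lin{\nu_d(\P^3)}$. However, there is a genuine gap at exactly the point you flag as ``the main obstacle'': you have no mechanism for carrying out the rank verification and the dimension estimate for the product $\bigl(\ker\YF(f)\bigr)\cdot\bigl(\im\YF(f)\bigr)^{\perp}$ \emph{uniformly} over all such $f$. The paper's proof does not attempt a uniform abstract argument; instead it (a) reduces to $n=3$ via a fibration $\pi:\sigma_4(\nu_d(\P^n))\setminus(\mathcal{D}\cup\sigma_3)\to\Gr(3,\P^n)$ whose fibers are the $n=3$ models, and then (b) invokes the Landsberg--Teitler classification (\cite[theorem 10.9.3.1]{La}) of points of $\sigma_4(\nu_d(\P^3))$ lying off $\sigma_3$ and off the subsecant loci into \emph{five explicit normal forms} $f_1,\dots,f_5$. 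This classification is the missing idea: it converts ``every point'' into five concrete computations of $\ker\phi(f_i)$, $\im\phi(f_i)^{\perp}$ and their product (the hardest two cases, $f_4$ and $f_5$, even require identifying the product with the degree-$d$ part of an explicit ideal and computing its minimal free resolution to extract the Hilbert function $\binom{d+3}{3}-16$). Your plan to instead ``rule out every degeneration of the length-$4$ scheme'' by analyzing block structure and apolarity is a restatement of the difficulty, not a proof: nothing in your proposal shows why the flattening rank cannot drop, nor why the multiplication map cannot collapse, at some boundary point that avoids $\sigma_3$ and all $2$-subsecants.

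Two smaller issues compound this. First, your claim that $f^{\perp}$ ``cuts out a length-$4$ Gorenstein scheme spanning $\P^3$'' is not automatic for a border-rank-$4$ form (such $f$ is a limit, e.g. $f_5=x_0^{d-3}x_1^3+x_0^{d-2}x_1x_2+x_0^{d-1}x_3$, and the existence of a length-$4$ annihilating scheme needs a smoothability argument or a citation); in the paper this is never needed because the normal forms are handled directly. Second, by skipping the fibration reduction you commit to computing with $\YF^1_{d_1,d_2,n}$ in all $n+1$ variables and matching $\binom{n+d}{d}-4n-4$ directly, which makes the already-unspecified dimension count strictly harder than the $n=3$ computation the paper actually performs; the reduction to $n=3$ is what makes the explicit case analysis finite and tractable.
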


\begin{proof}
First, note that for every $f$ in the statement there exists a unique 4-dimensional subspace $U$ such that $f\in\sigma_4(v_d(\P U))$, which is determined by the kernel of the symmetric flattening $\phi_{1,d-1}$. This gives a fibration as
\begin{equation*}
  \pi:\sigma_4(v_d(\P^n)) \setminus\big(\SSign[\Pn]{4,d}{2} \cup\sigma_3(v_d(\P^n))\big)\to \Gr(3,\P^n)~
\end{equation*}
whose fibers $\pi^{-1}(\P U)$'s are all isomorphic to
$\sigma_4(v_d(\P U)) \setminus(\SSign[\PP U]{4,d}{2} \cup\sigma_3(v_d(\P U)))$,
recalling that $\SSign[\Pn]{4,d}{2} \subset \sigma_4(v_d(\P^n))$ is the maximum subsecant locus, i.e., the union of all $\sigma_4(v_d(\P^1))$'s and $\sigma_4(v_d(\P^2))$'s in $\sigma_4(v_d(\P^n))$. So, we can reduce the proof of theorem to the case of $n=3$.

In case of $n=3$, there is a list of normal forms in $\sigma_4(v_d(\P^3)) \setminus(\SSigM[3]{4,d} \cup\sigma_3(v_d(\P^3)))$ due to Landsberg-Teitler (see \cite[Theorem~10.9.3.1]{La}) such as 
\begin{enumerate}[(i)]
\item $f_1={x_0}^d+{x_1}^d+{x_2}^d+{x_3}^d$, 
\item $f_2={x_0}^{d-1} x_1+{x_2}^d+{x_3}^d$, 
\item $f_3={x_0}^{d-1} x_1+{x_2}^{d-1} {x_3}$\,, 
\item $f_4={x_0}^{d-2} {x_1}^2+{x_0}^{d-1} x_2+{x_3}^d$, 
\item $f_5=x_0^{d-3} x_1^3+x_0^{d-2} x_1 x_2+x_0^{d-1} x_3$\,.
\end{enumerate}

Case (i) $f_1={x_0}^d+{x_1}^d+{x_2}^d+{x_3}^d$ (Fermat-type). It is well-known that this Fermat-type $f_1$ belongs to an almost transitive $\SL_4(\CC)$-orbit, which corresponds to a general point of $\sigma_4(v_d(\P^3))$. Hence, $f_1$ is a smooth point of $\sigma_4(v_d(\P^3))$.
\smallskip

Case (ii) $f_2={x_0}^{d-1} x_1+{x_2}^d+{x_3}^d$. Say
\begin{gather*}
  U=\CC\langle x_0,x_1,x_2,x_3\rangle.
\end{gather*}
Consider the Young flattening
\begin{gather*}
  \YF_{d-2,1,3}^1(f_2)\in S^{d-2} U\otimes U\otimes U^\ast\otimes\wedge^2 U\simeq\Hom(S^{d-2} U^\ast\otimes U, U\otimes\wedge^2 U)
\end{gather*}
defined in (\ref{YF_map}). For simplicity, we will denote this type of Young flattening by $\phi$ throughout the proof.
Then, $\phi(f_2)$ is
{\small
\begin{align*}
  &\alpha x_0^{d-2}\otimes x_0\otimes\big(\sum_{j=0}^{3} y_j\otimes x_j\wedge x_1\big)
  +\beta x_0^{d-3}x_1\otimes x_0\otimes\big(\sum_{j=0}^{3} y_j\otimes x_j\wedge x_0\big)
  +\gamma x_0^{d-2}\otimes x_1\otimes\big(\sum_{j=0}^{3} y_j\otimes x_j\wedge x_0\big)\\
  &+\delta x_2^{d-2}\otimes x_2\otimes\big(\sum_{j=0}^{3} y_j\otimes x_j\wedge x_2\big)
  +\epsilon x_3^{d-2}\otimes x_3\otimes\big(\sum_{j=0}^{3} y_j\otimes x_j\wedge x_3\big)
\end{align*}
}
for some nonzero $\alpha, \beta, \gamma, \delta, \epsilon\in\CC$. Note that as a linear map $S^{d-2} U^\ast\otimes U\to U\otimes\wedge^2 U$, $\rank\phi(x_0^5)=3$ and $\rank\phi(f_2)=4\cdot3=12$. Thus, by Proposition~\ref{conormal_YF} $(\ker\phi(f_2))\cdot (\im\phi(f_2))^\perp$ produces a subspace of $\hat{N}^{\ast}_{[f_2]}\sigma_4(v_d(\P^3))$. 

For $d=3$, the expected dimension of $\hat{N}^{\ast}_{[f_2]}\sigma_4(v_3(\P^3))$ for the smoothness is ${3+3\choose3}-16=4$ and the corresponding four can be chosen as $y_0y_2y_3, y_1^2 y_2, y_1y_2y_3, y_1^2y_3$ in $\Sd^3 U^\ast$ which are given by the product of $\{ y_1\otimes x_0, y_2\otimes x_2, y_3\otimes x_3\}$ in $\ker\phi(f_2) \subset U^\ast\otimes U$ and 
\begin{align}\label{im_f2}
&\{~y_0\otimes y_2\wedge y_3, ~y_1\otimes y_1\wedge y_2,  ~y_1\otimes y_1\wedge y_3, ~y_1\otimes y_2\wedge y_3,\\ \nonumber
&~y_2\otimes y_0\wedge y_1,  ~y_2\otimes y_0\wedge y_3, ~y_2\otimes y_1\wedge y_3, ~y_3\otimes y_0\wedge y_1,  ~y_3\otimes y_0\wedge y_2, ~y_3\otimes y_1\wedge y_2~\}~
\end{align}
in $\im\phi(f_2)^\perp \subset U^\ast\otimes\wedge^2 U^\ast$. So, $\sigma_4$ is non-singular at $f_2$.

For any $d\ge4$, in $\ker\phi(f_2) \subset S^{d-2} U^\ast\otimes U$, one can find a subspace generated by
\begin{gather*}
  \{F\otimes x_i~|~F\in J_{d-2},~i=0,\ldots,3\},
\end{gather*}
where $J=\langle y_0y_2, y_0y_3, y_1^2, y_1y_2, y_1y_3, y_2y_3\rangle$ is an ideal in $\Sym U^\ast$. Also, in $\im\phi(f_2)^\perp \subset U^\ast\otimes\wedge^2 U^\ast$ there exists the same subspace as in (\ref{im_f2}). In this case, our $(\ker\phi(f_2))\cdot (\im\phi(f_2))^\perp$ contains the subspace of $\Sd^d U^\ast$ generated by
\begin{align*}
&\{y_0^{2}y_2^2, y_0^{2}y_2y_3, y_0^{2}y_3^2\} \cup\{y_0y_1^2 y_2, \ldots, y_0y_1y_3^2\} \cup\{y_0y_2^2 y_3,y_0 y_2 y_3^2\}\cup\{y_1^4,\ldots,y_1^2 y_3^{2}\} \cup\{y_1y_2^{2}y_3,y_1y_2y_3^{2},y_2^{2}y_3^2\}
\end{align*}
for $d=4$ and by
\begin{align*}
&\{y_0^{d-2}y_2^2, y_0^{d-2}y_2y_3, y_0^{d-2}y_3^2\} \cup\{y_0^{d-3}y_1^2 y_2, y_0^{d-3}y_1^2 y_3, \ldots, y_0^{d-3}y_3^3\}\\
& \cup\{y_0^{d-4}y_1^4, y_0^{d-4}y_1^3 y_2,\ldots, y_0^{d-4}y_3^4\} \cup\ldots \cup\{ y_0^2 y_1^{d-2},  y_0^2 y_1^{d-3}y_2, \ldots, y_0^2 y_3^{d-2}\}\\
& \cup\{y_0 y_1^{d-1},  y_0 y_1^{d-2}y_2, \ldots, y_0y_1 y_2^{d-2} \} \cup\{y_0 y_2^{d-2}y_3, \ldots, y_0y_2 y_3^{d-2} \}\\
& \cup\{y_1^d,\ldots,y_1^2y_2^{d-2},\ldots,y_1^2 y_3^{d-2}\} \cup\{y_1y_2^{d-2}y_3,\ldots,y_1y_2y_3^{d-2}\} \cup\{y_2^{d-2}y_3^2,\ldots,y_2^2 y_3^{d-2}\}
\end{align*} for any $d>4$
(note that the terms above are listed in the lexicographical order). In both cases, these monomial generators can be also represented as 
{\small
  \begin{align*}
    &\bigl(\{y_0^d,y_0^{d-1}y_1,\cdots,y_0 y_3^{d-1}\} \setminus\{y_0^d, y_0^{d-1}y_1, y_0^{d-1}y_2, y_0^{d-1}y_3, y_0^{d-2}y_1^2, y_0^{d-2}y_1 y_2, y_0^{d-2}y_1 y_3, y_0^{d-3}y_1^3, y_0y_2^{d-1}, y_0 y_3^{d-1}\}\bigr)\\
    & \cup\bigl(\{y_1^d, y_1^{d-1}y_2, \cdots, y_3^{d}\} \setminus\{y_1y_2^{d-1}, y_1 y_3^{d-1}, y_2^d, y_2^{d-1}y_3, y_2 y_3^{d-1}, y_3^d\}\bigr)~,
  \end{align*}
}
which implies that by Proposition~\ref{conormal_YF}
\begin{align*}
\dim\hat{N}^{\ast}_{[f_2]}\sigma_4(v_d(\P^3))&\ge \dim(\ker\phi(f_2))\cdot (\im\phi(f_2))^\perp \\
&\ge  \bigg\{{d-1+3\choose3}-10+{d+2\choose2}-6\bigg\}=\binom{d+3}{3}-16.
\end{align*}
Hence, $f_2$ is a smooth point of $\sigma_4$.
\smallskip

Case (iii) $f_3={x_0}^{d-1} x_1+{x_2}^{d-1} {x_3}$. Then, $\phi(f_3)$ is
{\small
\begin{align*}
&~\alpha x_0^{d-2}\otimes x_0\otimes\big(\sum y_j\otimes x_j\wedge x_1\big)+\beta x_0^{d-3}x_1\otimes x_0\otimes\big(\sum y_j\otimes x_j\wedge x_0\big)+\gamma x_0^{d-2}\otimes x_1\otimes\big(\sum y_j\otimes x_j\wedge x_0\big)\\
&+\delta x_2^{d-2}\otimes x_2\otimes\big(\sum y_j\otimes x_j\wedge x_3\big)+\epsilon x_2^{d-3}x_3\otimes x_2\otimes\big(\sum y_j\otimes x_j\wedge x_2\big)+\eta x_2^{d-2}\otimes x_3\otimes\big(\sum y_j\otimes x_j\wedge x_2\big)
\end{align*}
}
for some nonzero $\alpha, \beta, \gamma, \delta, \epsilon, \eta\in\CC$ so that $\rank\phi(f_3)=12$. For $d=3$, a subspace $\langle y_1\otimes x_0, y_3\otimes x_2\rangle$ in $\ker\phi(f_3) \subset U^\ast\otimes U$ and another subspace in $\im\phi(f_3)^\perp \subset U^\ast\otimes\wedge^2 U^\ast$
\begin{align}\label{im_f3}
&\langle~y_0\otimes y_2\wedge y_3, ~y_1\otimes y_1\wedge y_2,~  y_1\otimes y_1\wedge y_3, ~y_1\otimes y_2\wedge y_3,\\ \nonumber
&~y_2\otimes y_0\wedge y_1, ~y_3\otimes y_0\wedge y_1, ~y_3\otimes y_0\wedge y_3,  ~y_3\otimes y_1\wedge y_3~\rangle~
\end{align}
produces a desired 4-dimensional subspace $\langle~y_0y_3^2, y_1^2 y_2, y_1^2y_3, y_1y_3^2~\rangle$ in $\Sd^3 U^\ast$, which says that $\sigma_4$ is non-singular at $f_3$. 

Similarly, for the case of $d\ge4$, $(\ker\phi(f_3))\cdot (\im\phi(f_3))^\perp$ contains a subspace of $\hat{N}^{\ast}_{[f_3]}\sigma_4(v_d(\P^3)) \subset\Sd^d U^\ast$ which is generated by
{\Small
  \begin{align*}
&\bigl(\{y_0^d,y_0^{d-1}y_1,\cdots,y_0 y_3^{d-1}\} \setminus\{y_0^d, y_0^{d-1}y_1, y_0^{d-1}y_2, y_0^{d-1}y_3, y_0^{d-2}y_1^2, y_0^{d-2}y_1 y_2, y_0^{d-2}y_1 y_3, y_0^{d-3}y_1^3, y_0y_2^{d-1}, y_0 y_2^{d-2}y_3\}\bigr)\\
& \cup\bigl(\{y_1^d, y_1^{d-1}y_2, \cdots, y_3^{d}\} \setminus\{y_1y_2^{d-1}, y_1 y_2^{d-2}y_3, y_2^d, y_2^{d-1}y_3, y_2^{d-2} y_3^{2}, y_2^{d-3}y_3^3\}\bigr),
  \end{align*}
}
using a subspace $\langle~\{F\otimes x_i~|~F\in J_{d-2},~i=0,\ldots,3\}~\rangle$ in $\ker\phi(f_3)$, where $J$ is an ideal generated by $\{ y_0y_2, y_0y_3, y_1^2, y_1y_2, y_1y_3, y_3^2\}$ in $\Sym U^\ast$, and the same subspace in $\im\phi(f_3)^\perp$ as (\ref{im_f3}). Thus, $\dim\hat{N}^{\ast}_{[f_3]}\sigma_4(v_d(\P^3))\ge{d+3\choose3}-16$, which means that $f_3$ is also smooth. 
\smallskip

Case (iv) $f_4={x_0}^{d-2} {x_1}^2+{x_0}^{d-1} x_2+{x_3}^d$. For $d=3$, we have
{\small
\begin{align*}
\phi(f_4)&=2 x_0\otimes x_1\otimes\big(\sum y_j\otimes x_j\wedge x_1\big)+2 x_1\otimes x_0\otimes\big(\sum y_j\otimes x_j\wedge x_1\big)+2 x_1\otimes x_1\otimes\big(\sum y_j\otimes x_j\wedge x_0\big)\\
&+2 x_0\otimes x_0\otimes\big(\sum y_j\otimes x_j\wedge x_2\big)+2 x_0\otimes x_2\otimes\big(\sum y_j\otimes x_j\wedge x_0\big)+2 x_2\otimes x_0\otimes\big(\sum y_j\otimes x_j\wedge x_0\big)\\
&+6 x_3\otimes x_3\otimes\big(\sum y_j\otimes x_j\wedge x_3\big)
\end{align*}
}
and $\rank\phi(f_4)=12$. $\hat{N}^{\ast}_{[f_4]}\sigma_4(v_3(\P^3))$ contains a 4-dimensional subspace corresponding to $\langle -y_0y_2y_3+y_1^2 y_3, y_1 y_2y_3, y_2^3, y_2^2y_3\rangle$ which can be spanned by $\{y_2\otimes x_0, y_3\otimes x_3\}$ in $\ker\phi(f_4) \subset U^\ast\otimes U$ and  
\begin{align*}
&\{~y_3\otimes y_0\wedge y_1,  y_3\otimes y_0\wedge y_2, -y_1\otimes y_1\wedge y_2+y_2\otimes y_0\wedge y_2, -y_0\otimes y_2\wedge y_3+y_1\otimes y_1\wedge y_3\}
\end{align*}
in $\im\phi(f_4)^\perp \subset U^\ast\otimes\wedge^2 U^\ast$. So, $\sigma_4$ is non-singular at $f_4$.
\smallskip

For $d\ge4$, it holds that
{\small
\begin{align*}
\phi(f_4)&=2 x_0^{d-2}\otimes x_1\otimes\big(\sum y_j\otimes x_j\wedge x_1\big)+2(d-2) x_0^{d-3}x_1\otimes x_0\otimes\big(\sum y_j\otimes x_j\wedge x_1\big)\\
&+2(d-2) x_0^{d-3}x_1\otimes x_1\otimes\big(\sum y_j\otimes x_j\wedge x_0\big)+(d-2)(d-3) x_0^{d-4}x_1^{2}\otimes x_0\otimes\big(\sum y_j\otimes x_j\wedge x_0\big)\\
&+(d-1) x_0^{d-2}\otimes x_0\otimes\big(\sum y_j\otimes x_j\wedge x_2\big)+(d-1) x_0^{d-2}\otimes x_2\otimes\big(\sum y_j\otimes x_j\wedge x_0\big)\\&+(d-1)(d-2) x_0^{d-3}x_2\otimes x_0\otimes\big(\sum y_j\otimes x_j\wedge x_0\big)+d(d-1) x_3^{d-2}\otimes x_3\otimes\big(\sum y_j\otimes x_j\wedge x_3\big).
\end{align*}
}

In this case, $\rank\phi(f_4)$ is also $12$ and $\ker\phi(f_4)$ has a subspace $A_1$ which is generated by
\begin{align*}
&\{~\langle y_0 y_3, y_1 y_2, y_1y_3, y_2^2,y_2y_3\rangle_{d-2}\otimes x_i~(i=0,\ldots,3), ~\langle y_0 y_2\rangle_{d-2}\otimes x_0, ~\langle y_1^2\rangle_{d-2}\otimes x_0, \\ \nonumber
&~\langle y_3^2\rangle_{d-2}\otimes x_3,~\langle -2y_0 y_2+(d-1)y_1^2\rangle_{d-2}\otimes x_2 ~\}
\end{align*}
and $\im\phi(f_4)^\perp$ has a subspace $B_1$ spanned by
\begin{align*}
&\{~y_2\otimes y_1\wedge y_2, ~y_2\otimes y_1\wedge y_3,  ~y_2\otimes y_2\wedge y_3, ~y_3\otimes y_0\wedge y_1, ~y_3\otimes y_0\wedge y_2, ~ y_3\otimes y_1\wedge y_2, \\ \nonumber
&~-(d-1)y_1\otimes y_1\wedge y_2+2y_2\otimes y_0\wedge y_2,~ -2y_0\otimes y_2\wedge y_3+(d-1)y_1\otimes y_1\wedge y_3\}.
\end{align*}
Then, one can check that $A_1\cdot B_1$ produces a subspace of $\hat{N}^{\ast}_{[f_4]}\sigma_4(v_3(\P^3))$ in $\Sd^d U^\ast$ which is the degree $d$-part of an ideal $I_1$ generated by 19 quartics
{\small
  \begin{align*}
    \{&-4\underline{y_0^2 y_2^2}+(4d-4)y_0y_1^2 y_2-(d-1)^2 y_1^4,~-2\underline{y_0^2 y_2y_3}+(d-1)y_0 y_1^2y_3,~\underline{y_0^2y_3^2},~-2\underline{y_0y_1y_2^2}+(d-1)y_1^3 y_2,\\
      &\,-2\underline{y_0y_1 y_2y_3}+3 y_1^3y_3,
        ~\underline{y_0y_1y_3^2},~-2\underline{y_0y_2^3}+(d-1)y_1^2y_2^2,~-2\underline{y_0y_2^2y_3}+(d-1)y_1^2y_2y_3,~-2\underline{y_0y_2y_3^2}+(d-1)y_1^2y_3^2,\\
      &\,~\underline{y_1^3y_3},~\underline{y_1^2y_2^2},~\underline{y_1^2y_2y_3},~\underline{y_1^2y_3^2}, ~\underline{y_1y_2^3}, ~\underline{y_1y_2^2y_3},~\underline{y_1y_2y_3^2},~\underline{y_2^4},~\underline{y_2^3y_3},~\underline{y_2^2y_3^2}\ \}
  \end{align*}
}
(here, the \textit{underline} means the leading term with respect to the lexicographic order). Say $T=\Sym U^\ast$. $I_1$ has a minimal free resolution as
\begin{equation}\label{resoln_I1}
0\to T(-7)^4\to T(-6)^{22}\to T(-5)^{36}\to T(-4)^{19}\to I\to 0~,
\end{equation}
which shows that the Hilbert function of $I$ can be computed as
\begin{align*}
H(I,d)&=19{d-4+3\choose3}-36{d-5+3\choose3}+22{d-6+3\choose3}-4{d-7+3\choose3}\\
&={d+3\choose3}-16\quad(d\ge4).
\end{align*}
This implies that
\begin{gather*}
  {d+3\choose3}-16\ge\dim\hat{N}^{\ast}_{[f_4]}\sigma_4(v_d(\P^3))\ge H(I,d)={d+3\choose3}-16,
\end{gather*}
which means that $\sigma_4$ is also smooth at $f_4$.

Case (v) The final form $f_5=x_0^{d-3} x_1^3+x_0^{d-2} x_1 x_2+x_0^{d-1} x_3$. We begin with $d=3$. We have
{\small
\begin{align*}
\phi(f_5)&=6 x_1\otimes x_1\otimes\big(\sum y_j\otimes x_j\wedge x_1\big)+x_2\otimes x_0\otimes\big(\sum y_j\otimes x_j\wedge x_1\big)+x_2\otimes x_1\otimes\big(\sum y_j\otimes x_j\wedge x_0\big)\\&+x_1\otimes x_0\otimes\big(\sum y_j\otimes x_j\wedge x_2\big)+x_1\otimes x_2\otimes\big(\sum y_j\otimes x_j\wedge x_0\big)+x_0\otimes x_1\otimes\big(\sum y_j\otimes x_j\wedge x_2\big)\\
         &+x_0\otimes x_2\otimes\big(\sum y_j\otimes x_j\wedge x_1\big)+2x_3\otimes x_0\otimes\big(\sum y_j\otimes x_j\wedge x_0\big)+2 x_0\otimes x_0\otimes\big(\sum y_j\otimes x_j\wedge x_3\big)\\&+2 x_0\otimes x_3\otimes\big(\sum y_j\otimes x_j\wedge x_0\big)
\end{align*}
}
and $\rank\phi(f_5)=12$. The conormal space $\hat{N}^{\ast}_{[f_5]}\sigma_4(v_3(\P^3))$ contains a 4-dimensional subspace corresponding to $\langle -y_0y_3^2+4y_1 y_2 y_3-24y_2^3, -y_1 y_3^2+12y_2^2 y_3, y_2 y_3^2, y_3^3\rangle$ which can be spanned by $\{y_3\otimes x_0, 2y_1\otimes x_0+12y_2\otimes x_1+y_3\otimes x_2\}$ in $\ker\phi(f_5) \subset U^\ast\otimes U$ and  
\begin{equation}\label{f5_d=3}
\{~y_3\otimes y_1\wedge y_2, -2y_2\otimes y_1\wedge y_2+y_3\otimes y_0\wedge y_2, -2y_1\otimes y_2\wedge y_3+y_3\otimes y_0\wedge y_3\}
\end{equation}
in $\im\phi(f_5)^\perp \subset U^\ast\otimes\wedge^2 U^\ast$. So, $\sigma_4$ is non-singular at $f_5$.

For each $d\ge4$, the Young flattening is of the form
{\small
\begin{align*}
\phi(f_5)&=6 x_0^{d-3}x_1\otimes x_1\otimes\big(\sum y_j\otimes x_j\wedge x_1\big)+3(d-3) x_0^{d-4}x_1^2\otimes x_0\otimes\big(\sum y_j\otimes x_j\wedge x_1\big)\\
&+3(d-3) x_0^{d-4}x_1^2\otimes x_1\otimes\big(\sum y_j\otimes x_j\wedge x_0\big)+(d-3)(d-4) x_0^{d-5}x_1^{3}\otimes x_0\otimes\big(\sum y_j\otimes x_j\wedge x_0\big)\\
&+(d-2)(d-3) x_0^{d-4}x_1 x_2\otimes x_0\otimes\big(\sum y_j\otimes x_j\wedge x_0\big)+(d-2) x_0^{d-3}x_2\otimes x_0\otimes\big(\sum y_j\otimes x_j\wedge x_1\big)\\
&+(d-2) x_0^{d-3}x_2\otimes x_1\otimes\big(\sum y_j\otimes x_j\wedge x_0\big)+(d-2) x_0^{d-3}x_1\otimes x_0\otimes\big(\sum y_j\otimes x_j\wedge x_2\big)\\
&+(d-2) x_0^{d-3}x_1\otimes x_2\otimes\big(\sum y_j\otimes x_j\wedge x_0\big)+x_0^{d-2}\otimes x_1\otimes\big(\sum y_j\otimes x_j\wedge x_2\big)\\
&+x_0^{d-2}\otimes x_2\otimes\big(\sum y_j\otimes x_j\wedge x_1\big)+(d-1)(d-2) x_0^{d-3}x_3\otimes x_0\otimes\big(\sum y_j\otimes x_j\wedge x_0\big)\\
         &+(d-1) x_0^{d-2}\otimes x_0\otimes\big(\sum y_j\otimes x_j\wedge x_3\big)+(d-1) x_0^{d-2}\otimes x_3\otimes\big(\sum y_j\otimes x_j\wedge x_0\big)
\end{align*}
}
and $\rank\phi(f_5)$ is also $12$. Now, $\ker\phi(f_5)$ contains a subspace $A_2$ which is generated by
\begin{align*}
&\{~\langle ~y_1 y_3,~ y_2^2,~ y_2y_3,~ y_3^2~\rangle_{d-2}\otimes x_i~(i=0,\ldots,3), ~\langle -6y_0y_2+(d-2)y_1^{2}\rangle_{d-2}\otimes x_3,\\ \nonumber
&~\langle -y_0 y_3+(d-1)y_1y_2\rangle_{d-2}\otimes x_3~\}
\end{align*}
and $\im\phi(f_5)^\perp$ has a subspace $B_2$ spanned by
\begin{align*}
&\{~y_2\otimes y_2\wedge y_3, ~y_3\otimes y_1\wedge y_2,  ~y_3\otimes y_1\wedge y_3, ~y_3\otimes y_2\wedge y_3, ~-y_1\otimes y_2\wedge y_3+y_2\otimes y_1\wedge y_3, \\ \nonumber
&~-(d-1)y_2\otimes y_1\wedge y_2+y_3\otimes y_0\wedge y_2,~ -(d-1)y_1\otimes y_2\wedge y_3+y_3\otimes y_0\wedge y_3, \\ \nonumber
&~-y_0\otimes y_1\wedge y_2+y_1\otimes y_0\wedge y_2-y_2\otimes y_0\wedge y_1,~-y_0\otimes y_1\wedge y_3+y_1\otimes y_0\wedge y_3-y_3\otimes y_0\wedge y_1, \\ \nonumber
&~-6y_0\otimes y_2\wedge y_3+(d-2)y_1\otimes y_1\wedge y_3-6(d-1)y_2\otimes y_1\wedge y_2~\}~.
\end{align*}
Then, one can check that $A_2\cdot B_2$ produces a subspace of $\hat{N}^{\ast}_{[f_5]}\sigma_4(v_3(\P^3))$ in $\Sd^d U^\ast$  which is the degree $d$-part of an ideal $I_2$ generated by 19 quartics
{\small
  \begin{align*}
    \{\,
    &\,36y_0^2y_2^2-12(d-2)y_0y_1^2y_2+(d-2)^2 y_1^4 ,
      ~6y_0^2y_2y_3-(d-2)y_0y_1^2y_3-6(d-1)y_0y_1y_2^2+(d-1)(d-2)y_1^3y_2 ,\\
    &-y_0^2y_3^2+2(d-1)y_0y_1y_2y_3-(d-1)^2y_1^2y_2^2 , 
      ~-6y_0y_1y_2y_3+(d-2)y_1^3y_3 ,
      ~-y_0y_1y_3^2+(d-1)y_1^2y_2y_3 ,\\
    &-6y_0y_2^3 +(d-2)y_1^2y_2^2,
      ~y_0y_2^2y_3-(d-1)y_1y_2^3 ,
      ~y_0y_2y_3^2-(d-1)y_1y_2^2y_3,
      ~y_0y_3^3-(d-1)y_1y_2y_3^2,\\
    &\,(d-2)y_1^2y_2y_3-6(d-1)y_1y_2^3,
      ~y_1^2y_3^2,~y_1y_2^2y_3,~ y_1y_2y_3^2,~ y_1y_3^3,
      ~y_2^4 ,~ y_2^3y_3 , y_2^2y_3^2 ,~y_2y_3^3,~y_3^4
      \ \}.
  \end{align*}
}
Note that $I_2$ has the same minimal free resolution as $I_1$ in (\ref{resoln_I1}). Therefore, by the same argument, we conclude that $f_5$ is also a smooth point when $d\ge4$.
\end{proof}

As a direct consequence of the main results in the paper, we also obtain the following corollary on the (non-)singularity of subsecant loci in the 4-th secant variety.

\begin{corollary}[from subsecant loci]\label{sing_4th_Vero_sub}
  Let $v_d:\P^n \rightarrow \P^N$
  be the $d$-uple Veronese embedding
  with $n\ge3$, $d\ge3$, and $N={n+d\choose d}-1$.
  Then the following holds.
\begin{enumerate}
\item[(i)] A general point in $\sigma_4(v_d(\P^2)) \setminus\sigma_{3}(v_d(\P^n))$ is smooth for $d\ge4$. For $d=3$, $\sigma_4(v_3(\P^2))$ is a non-trivial singular locus for any $n\ge4$, while all points in $\sigma_4(v_3(\P^2)) \setminus\sigma_{3}(v_3(\P^3))$ are smooth for $n=3$.
\item[(ii)] $\sigma_4(v_d(\P^n))$ is smooth at each point in $\sigma_4(v_d(\P^1)) \setminus\sigma_{3}(v_d(\P^n))$ if $d\ge 7$. $\sigma_4(v_d(\P^1))$ is non-trivial singular locus when $d=6$ and $\sigma_4(v_d(\P^1)) \subset\sigma_{3}(v_d(\P^n))$ in case of $d\le 5$. 
\end{enumerate}
\end{corollary}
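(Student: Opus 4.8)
The plan is to deduce this corollary as a table lookup into the trichotomy established by the main theorems, specializing to $k=4$ and to the two relevant subsecant dimensions $m=2$ (for Part (i)) and $m=1$ (for Part (ii)). In both settings the hypotheses $n\ge3$, $d\ge3$, $k=4\ge4$ and $m<\min\{k-1,n\}=\min\{3,n\}=3$ are automatic, so $\sigma_4(\nu_d(\P^2))$ and $\sigma_4(\nu_d(\P^1))$ are genuine subsecant varieties to which the results of \S\ref{sect_m>=2} and \S\ref{sect_m=1} apply directly.

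For Part (i) I would take $m=2$, so the controlling quantity is $\frac{\binom{m+d}{m}}{m+1}=\frac{\binom{d+2}{2}}{3}=\frac{(d+1)(d+2)}{6}$. Since $(d,m)=(4,2)$ is one of the exceptional pairs, I would invoke Theorem~\ref{general_m2} when $d=4$ and Theorem~\ref{general_m} otherwise. When $d=3$ one computes $\frac{(d+1)(d+2)}{6}=\frac{10}{3}$, and because $\frac{10}{3}<4<\frac{13}{3}$ the hypothesis $\frac{\binom{m+d}{m}}{m+1}<k<\frac{\binom{m+d}{m}}{m+1}+1$ of Theorem~\ref{general_m}(ii) is met, yielding that $\sigma_4(\nu_3(\P^2))$ is a non-trivial singular locus. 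When $d\ge4$ I would check $k=4\le\frac{(d+1)(d+2)}{6}$: at $d=4$ the smooth range of Theorem~\ref{general_m2}(i) is $k\le\lfloor 5\rfloor-1=4$, and for $d\ge5$ the bound $\frac{(d+1)(d+2)}{6}\ge7$ exceeds $4$, so Theorem~\ref{general_m}(i) gives smoothness at a general point. Throughout I would verify that the sporadic exceptions $(k,d,m)=(9,6,2),(9,3,5),(8,4,3)$ never occur, which is immediate since $k=4$.

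For Part (ii) I would set $m=1$ and apply Theorem~\ref{thm_m1} in the regime $n\ge3$, $k=4$, where $2k-1=7$, $2k-2=6$, $2k-3=5$. Its three cases then translate verbatim: $d\ge7$ gives smoothness at \emph{every} point of $\sigma_4(\nu_d(\P^1))\setminus\sigma_3(\nu_d(\P^n))$ from Part (i); $d=6$ produces a non-trivial singular locus from Part (ii); and $d\le5$ yields the containment $\sigma_4(\nu_d(\P^1))\subset\sigma_3(\nu_d(\P^n))$ from Part (iii).

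Because the statement is purely a specialization of the main trichotomy, there is no genuine analytic obstacle; the only care required is bookkeeping. The main point to watch is the exceptional pair $(d,m)=(4,2)$, which forces the use of the floor/ceiling version Theorem~\ref{general_m2} in place of Theorem~\ref{general_m}, together with confirming that the boundary value $d=4$ still lands in the smooth range $k\le\lfloor\frac{\binom{d+2}{2}}{3}\rfloor-1=4$ for Part (i).
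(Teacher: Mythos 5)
Your proposal is correct and is essentially the paper's own proof: both treat the corollary as a direct specialization of the main trichotomy, using Theorem~\ref{general_m}(ii) for $(k,d,m)=(4,3,2)$, Theorem~\ref{general_m2}(i) for the exceptional pair $(d,m)=(4,2)$, Theorem~\ref{general_m}(i) for $d\ge5$, and Theorem~\ref{thm_m1} verbatim for $m=1$. Your explicit arithmetic (e.g.\ $\tfrac{10}{3}<4<\tfrac{13}{3}$ and $k=4\le\lfloor 5\rfloor-1$) just spells out the bookkeeping the paper leaves implicit.
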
 

\begin{proof}
  Since $k=4~ \&~ n\ge3$, the relevant range for an $m$-subsecant  locus in $\sigma_4(v_d(\Pn))$ is $1\le m\le 2$.

  (i) For $m=2$, Theorem~\ref{general_m} (ii) says that $\sigma_4(v_d(\P^m))$ is a non-trivial singular locus in $\sigma_4(v_d(\P^n))$ if $d=3, n\ge4$.
  The case $(d,n)=(3,3)$ is also discussed in Theorem~\ref{general_m} (iv).

When $d=4,5$ and $6$, we can say that a general point in $\sigma_4(v_d(\P^2)) \setminus\sigma_{3}(v_d(\Pn))$ is smooth by Theorem~\ref{general_m2} (i). For any $d\ge7$, the same conclusion follows from Theorem~\ref{general_m} (i).

(ii) This is given by Theorem~\ref{thm_m1} for the case $k=4, m=1$. 
\end{proof}

We add some remarks on Corollary \ref{sing_4th_Vero_sub}.

\begin{remark}\label{trivial_sing_4,2,2}
(a) For $d=2$, a subsecant variety $\sigma_4(v_d(\P^2))$ in $\sigma_4(v_d(\P^n))$ is a trivial singular locus, because $\sigma_4(v_d(\P^2))=\sigma_3(v_d(\P^2)) \subset\sigma_3(v_d(\P^n))$.

(b) As pointed out in Example \ref{sing_special_pt}, a singularity can occur at a \ti{special} point in $\sigma_4(v_d(\P^2)) \setminus\sigma_{3}(v_d(\P^n))$ even for $d\ge4$.
\end{remark}

Finally, we end this section by listing cases in which the same nice description for the singular locus of $\sigma_k(v_d(\PP^n))$ as in Example \ref{eg_smallest_4thsec} can be made.

\begin{corollary}\label{coro_for_cone_sing}
  Let $V$ be an $(n+1)$-dimensional complex vector space $(n\ge1)$ and $v_d(\PP V) \subset\PP^N$ be the image of the $d$-uple $(d\ge2)$ Veronese embedding of $\PP V$.
  Assume that $(k,d,n)$ satisfies one of the following conditions:
  \begin{enumerate}
  \item[(i)] $d=2$ and $n\ge k-1$,
  \item[(ii)] $k=2$, $d\ge2$ and $n\ge1$,
  \item[(iii)] $k=3$, $d=3$ and $n\ge2$, or $k=3$, $d=4$ and $n\ge3$,
  \item[(iv)] $k=4$, $d=3$ and $n\ge4$\,.
  \end{enumerate}  
Then the singular locus of $\sigma_k(v_d(\PP V))$ is given exactly as
\begin{equation}\label{nice_description_singloc}
  \big\{\,f\in \PP S^d V~|~\textrm{$f$ is any form which can be expressed using at most $k-1$ variables}\,\big\},\end{equation}
which is an irreducible locus of dimension
\begin{gather*}
  (k-1)(n-k+2)+{d+k-2\choose d}-1
\end{gather*}
and is equal to the maximum subsecant locus $\Sigma_{k,d}(\min\{k-1,n\}-1;\PP V)$.
\end{corollary}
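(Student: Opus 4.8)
The plan is to reinterpret the set (\ref{nice_description_singloc}) as a cone locus and then match it, range by range, against singular-locus descriptions already available. A form $f\in\PP S^d V$ uses at most $k-1$ variables exactly when $f\in\lin{\nu_d(\PP^{k-2})}$ for some $(k-2)$-plane $\PP^{k-2}\subset\PP V$, so (\ref{nice_description_singloc}) is the cone locus $\mathcal C:=\bigcup_{\PP^{k-2}\subset\PP V}\lin{\nu_d(\PP^{k-2})}$. First I would record the bookkeeping tying $\mathcal C$ to subsecants. By the $(1,d-1)$-flattening argument recalled in \S\ref{intro}, every $f\in\sigma_k(\nu_d(\PP V))$ lies in $\sigma_k(\nu_d(\PP^{m}))$ with $m+1$ equal to its number of essential variables and $m\le k-1$; hence $f\in\mathcal C$ (essential variables $\le k-1$) forces $f\in\sigma_k(\nu_d(\PP^{k-2}))$ for some $(k-2)$-plane, while the reverse inclusion is clear. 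This yields
\begin{equation*}
  \mathcal C\cap\sigma_k(\nu_d(\PP V))=\bigcup_{\PP^{k-2}\subset\PP V}\sigma_k(\nu_d(\PP^{k-2})),
\end{equation*}
a union that already contains the trivial locus $\sigma_{k-1}(\nu_d(\PP V))$.

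Second, I would check that in each of (i)--(iv) the $(k-2)$-subsecant fills its span, $\sigma_k(\nu_d(\PP^{k-2}))=\lin{\nu_d(\PP^{k-2})}$: for $d=2$ a quadric in $k-1$ variables has rank $\le k-1<k$; for $k=2$ the span is a single point; for $k=3$ one has $\dim\sigma_3(\nu_d(\PP^1))=\min\{5,d\}=d$ for $d\in\{3,4\}$; and for $(k,d)=(4,3)$ the equality $\sigma_4(\nu_3(\PP^2))=\PP^9$ noted in Example~\ref{eg_smallest_4thsec} applies. Consequently $\mathcal C\subseteq\sigma_k(\nu_d(\PP V))$ and $\mathcal C=\bigcup_{\PP^{k-2}}\sigma_k(\nu_d(\PP^{k-2}))$, so the corollary reduces to the single identity
\begin{equation*}
  \Sing\bigl(\sigma_k(\nu_d(\PP V))\bigr)=\bigcup_{\PP^{k-2}\subset\PP V}\sigma_k(\nu_d(\PP^{k-2})).
\end{equation*}

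Third, I would verify this identity by quoting the relevant prior result in each case. For the inclusion $\supseteq$ the entire $(k-2)$-subsecant must lie in $\Sing$: in case (iv) ($m=k-2=2$) this is Part~(ii) of Theorem~\ref{general_m} (equivalently Theorem~\ref{sing_4th_Vero}(ii), and legitimate since $m=2<\min\{3,n\}$ when $n\ge3$); in case (iii) ($m=k-2=1$) it is Part~(ii) or Part~(iii) of Theorem~\ref{thm_m1} according as $d=4$ or $d=3$; and in cases (i)--(ii) it is the classical symmetric-determinantal computation and Kan's theorem \cite{Kan}. The trivial containment $\sigma_{k-1}\subset\Sing$ is subsumed. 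For the inclusion $\subseteq$ I must show that the full-secant locus $\sigma_k(\nu_d(\PP V))\setminus\mathcal C$ (points whose number of essential variables equals $k$) is smooth: this is Kan's theorem for (ii), smoothness of the top rank stratum of a symmetric determinantal variety for (i), the computation of $\Sing(\sigma_3)$ from \cite{H18} for (iii) (see Remark~\ref{rem_main_result}(a), which gives $\Sing=\mathcal C$ directly when $d=4$, while the absence of non-trivial singularities forces $\Sing=\sigma_2=\mathcal C$ when $d=3$), and Theorem~\ref{sing_4th_Vero}(i) via Theorem~\ref{sing_4th_Vero_gen} for (iv).

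The proof is therefore organizational: the genuine difficulty is borne entirely by the cited theorems, and the steps needing care are the equivalence ``essential variables $\le k-1$'' $\Leftrightarrow$ ``membership in some $(k-2)$-subsecant'' and the non-defective filling of $\sigma_k(\nu_d(\PP^{k-2}))$, which together guarantee that $\mathcal C$ is captured exactly. The one genuinely delicate point is the handling of the finitely many degenerate instances in which $\sigma_k(\nu_d(\PP V))$ fills the ambient $\PP S^d V$ (for example $d=2,\ n=k-1$, or $k=2$ with $d,n$ small): there $\Sing=\varnothing$ whereas $\mathcal C\neq\varnothing$, so the stated equality must be read under the standing hypothesis---implicit already in the cited singular-locus results---that $\sigma_k(\nu_d(\PP V))$ is a proper subvariety of its linear span.
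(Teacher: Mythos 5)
Your proposal is correct and follows the same overall strategy as the paper: identify the locus (\ref{nice_description_singloc}) with the cone locus $\mathcal C=\bigcup_{\PP^{k-2}\subset\PP V}\lin{\nu_d(\PP^{k-2})}$, check that in each range the $(k-2)$-subsecant fills its span, and then quote \cite{Kan}, \cite{H18} together with Theorem~\ref{thm_m1}, and Theorem~\ref{sing_4th_Vero} for the two inclusions. The one genuine difference is how the trivial locus gets absorbed. The paper proves the containment $\sigma_{k-1}(\nu_d(\PP V))\subset\bigcup_{\PP^{k-2}}\lin{\nu_d(\PP^{k-2})}$ (its claim (\ref{trivial in subsecant})) by an irreducibility/closure argument: the union is closed because it is swept out by a regular map from $\mathbb{G}(k-2,n)$ to a Grassmannian, general points of $\sigma_{k-1}$ are sums of $k-1$ powers of linear forms, and one takes closures. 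You instead use the essential-variables bookkeeping from \S\ref{intro} (every point of $\sigma_k(\nu_d(\Pn))$ lies in $\sigma_k(\nu_d(\Pm))$ with $m+1$ its number of essential variables), which yields the cleaner pointwise identity $\mathcal C\cap\sigma_k(\nu_d(\PP V))=\bigcup_{\PP^{k-2}}\sigma_k(\nu_d(\PP^{k-2}))$ with no closure argument at all. Both are legitimate; the paper's route is self-contained, while yours leans on the fact, asserted in the introduction and provable by the linear-projection trick that also drives Lemma~\ref{lem:sigma-k-1}, that border rank is computed on the span of the essential variables. Your closing observation is moreover a genuine catch rather than a defect: the stated equality literally fails whenever $\sigma_k(\nu_d(\PP V))$ fills the ambient space (e.g.\ $d=2$, $n=k-1$, or $k=2$ with $(d,n)=(2,1)$ or $(3,1)$), since there $\Sing(\sigma_k)=\emptyset$ while $\mathcal C\neq\emptyset$; the paper's one-line treatment of cases (i)--(ii) glosses over exactly these boundary instances, so your properness caveat is a needed reading of the statement.

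Two small repairs. In case (iii) with $d=3$ and $n=2$, parts (i)--(iii) of Theorem~\ref{thm_m1} do not apply (for $k=3$ they require $n\ge3$); the containment $\sigma_3(\nu_3(\PP^1))\subset\sigma_2(\nu_3(\PP^2))$ you need there is Part~(iv) of that theorem. Second, where you invoke the trivial containment $\sigma_{k-1}\subset\Sing(\sigma_k)$, note explicitly that $\sigma_k$ is proper in all of cases (iii)--(iv) (for instance $\sigma_3(\nu_3(\PP^2))$ is the Aronhold hypersurface in $\PP^9$), which is precisely the properness hypothesis you isolate at the end.
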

\begin{proof} For case (i), the assertion is immediate since it corresponds to symmetric matrices. In case (ii), we draw the conclusion from the fact that $\Sing(\sigma_2(v_d(\PP V)))=v_d(\PP V)$ for every $d,n$ \cite{Kan}. 

For the remaining cases, we first claim that for any $3\le k\le n+1$ it holds
\begin{equation}\label{trivial in subsecant}
\sigma_{k-1}(v_d(\PP V)) \subset \bigcup_{\PP^{k-2} \subset\PP V}\lin{v_d(\PP^{k-2})}.
\end{equation}
We note that the right hand side of (\ref{trivial in subsecant}) is an irreducible and closed subvariety of $\sigma_{k}(v_d(\PP V))$,
since it coincides with a subvariety $\ds\bigcup_{\Lambda\in \mathrm{Im}\Phi}\Lambda$,
where a map
\begin{gather*}
  \Phi:\mathbb{G}(k-2,n)\to\mathbb{G}\left(\binom{d+k-2}{d}-1,N\right)
\end{gather*}
sending each subspace $L$ of dimension $k-2$ to the linear span $\lin{v_d(L)}$ in $\PP^N$ is regular (see e.g. \cite[Example 6.10, Proposition 6.13]{Harris}). Then, because a general element of the left hand side is of the form $\ell_1^d+\cdots+\ell_{k-1}^d$ for some linear forms $\ell_i$'s, it belongs to $\lin{v_d(\PP^{k-2})}$ for some $\PP^{k-2} \subset\PP V$ so that the closure is also contained in the subvariety $\ds\bigcup_{\PP^{k-2} \subset\PP V}\lin{v_d(\PP^{k-2})}$.

For case (iii), by \cite[Theorem 2.1, Remark 2.4(a), and Corollary 2.11]{H18} and Theorem~\ref{thm_m1}, and 
for case (iv), by Theorem~\ref{sing_4th_Vero}, we know that 
$$\Sing(\sigma_k(v_d(\PP V)))=\sigma_{k-1}(v_d(\PP V)) \cup\big\{\bigcup_{\PP^{k-2} \subset\PP V}\sigma_k(v_d(\PP^{k-2}))\big\},$$
which can also be written as
\begin{gather*}
  \sigma_{k-1}(v_d(\PP V)) \cup\Sigma_{k,d}(\min\{k-1,n\}-1;\PP V).
\end{gather*}
In both cases (iii) and (iv), we have $\sigma_k(v_d(\PP^{k-2}))=\lin{v_d(\PP^{k-2})}$. Thus, by the above claim, the singular locus is equal to 
\begin{gather*}
  \bigcup_{\PP^{k-2} \subset\PP V}\lin{v_d(\PP^{k-2})}=\Sigma_{k,d}(\min\{k-1,n\}-1;\PP V),
\end{gather*}
which is irreducible and can be described as written in the statement. The formula for the dimension is immediate from dimension counting.
\end{proof}

\section{Concluding remark}\label{sect_final_remark}

So far, we have reported results on singular loci of $\sigma_k(v_d(\Pn))$ coming from the subsecant loci. To the best of our knowledge, there is no general idea or clear consensus on the singular locus of an arbitrary higher secant variety of any Veronese variety yet.
From this point of view, the present paper contributes by providing a more visible picture on the singular locus via showing a generic smoothness of the subsecant loci for relatively low $k$'s and confirming the singularity of the same loci for other $k$'s.

As we mentioned in the introduction, each point $p\in\sigma_k(v_d(\Pn)) \setminus\sigma_{k-1}(v_d(\Pn))$ is located in $\sigma_k(v_d(\Pm)) \setminus\sigma_{k-1}(v_d(\Pn))$ for some $1\le m\le \min\{k-1,n\}$. To make the picture more complete, we have two future issues: (i) On the subsecant loci (i.e., $m<\min\{k-1,n\}$), one needs to check the (non-)singularity not only at a general point but also at \ti{every} point  and (ii) Points in the full-secant locus (i.e., $m=\min\{k-1,n\}$) should be treated.

Issue (i) is expected to be very complicated, because at some \ti{special} point a singularity can also occur even for a low $k$ as shown in Example \ref{sing_special_pt} (in fact, we can generate more examples using a similar idea).
For the points in the subsecant loci, in general, one could not hope to find some nice `normal forms' and the situation is expected to be \ti{wild} (in other words, the subsecant loci may not be covered with finitely many nice families of $\SL$-orbits).
But, still we can push on our viewpoint a bit further and, along the same spirit, we can refine a main result of this paper in the following manner. Based on the singularity results in Theorems~\ref{thm_m1}, \ref{general_m}, and \ref{general_m2} and using a similar estimation as in \S\ref{subsec_est}, more generally we have:

\begin{theorem}\label{Join_s_k_and_s_r}
  Suppose that $m=1$ and $k, d$ satisfy the condition of (ii) or (iii) of Theorem~\ref{thm_m1}, or suppose that $k,d,m$ satisfy the condition of (ii) or (iii) of Theorem~\ref{general_m} or Theorem~\ref{general_m2};
  in other words, the $m$-subsecant variety $\sigma_k(v_d(\Pm))$ is a singular locus in $\sigma_k(v_d(\Pn))$. Let $1\le m \le n-1$ and $r \leq n-m$. 
  Then, unless $\sigma_{k+r}(v_d(\mathbb{P}^n))$ fills up the ambient space $\mathbb{P}^N$, the following holds
  \begin{gather}\label{skvdPm-srvdPn-in-Sing}
    J(\sigma_k(v_d(\Pm)), \sigma_r(v_d(\Pn)))  \subset \Sing(\sigma_{k+r}(v_d(\Pn))),
  \end{gather}
  where $J(X,Y)$ denotes the `(embedded) join' of two subvarieties $X,Y$ in their ambient space. 
\end{theorem}
\begin{proof}
  Suppose that the inclusion~(\ref{skvdPm-srvdPn-in-Sing}) does not hold.
  Then, taking
  $x_1, \dots, x_{k}$ to be general points of $v_d(\Pm)$
  and
  $x_{k+1}, \dots, x_{k+r}$
  to be general points of $v_d(\Pn)$,
  we may assume that
  $x \notin \Sing(\sigma_{k+r}(v_d(\Pn)))$
  for a general $x \in \lin{x_1, \dots, x_{k+r}}$.
  By Terracini's lemma, we have
  \begin{equation}
    L_1 = \lin{\TT_{x_{1}}v_d(\Pn), \dots, \TT_{x_{k}}v_d(\Pn)}  \subset \TT_{x} \sigma_{k+r}(v_d(\Pn))
  \end{equation}
  and by the assumption on $k$ we know that $\dim L_1 > kn + k - 1$.

  On the other hand, since
  $x_{k+1}, \dots, x_{k+r}$
  are general points of $v_d(\Pn)$,
  \begin{gather*}
    L_2 = \lin{\TT_{x_{k+1}}v_d(\Pn), \dots, \TT_{x_{k+r}}v_d(\Pn)}  \subset \TT_{x} \sigma_{k+r}(v_d(\Pn))
  \end{gather*}
  and $L_2$ has dimension at least $rn +r - 1$.

  Moreover, we may assume $L_1 \cap L_2 = \emptyset$ as follows.
  Taking $\PP^{n-m-1}  \subset \Pn$
  such that $x_{k+1}, \dots, x_{k+r} \in \PP^{r-1}  \subset \PP^{n-m-1}$
  and $\Pm \cap \PP^{n-m-1} = \emptyset$,
  and changing coordinates $t_0, \dots, t_{m}, u_{1}, \dots, u_{m'}$ on $\Pn$ as in \S\ref{subsec_est},
  we may say that
  $\PP^{n-m-1}$ is the zero set of $t_0=\dots=t_{m}=0$ and $\PP^{m}$ is the zero set of $u_1=\dots=u_{m'}=0$. For a point $x'\in\Pm$, using the parameterization (\ref{eq:coordi-t0=1-tu}), the tangent space $\TT_{v_d(x')}v_d(\Pn)$ is spanned by the rows of the matrix of the form $[\ast:\mathsf{O}]$ as (\ref{eq:expr-TTx}). On the other hand, for a point $x''\in \PP^{n-m-1}$ and for an affine open set containing $x''$ we may take $u_{m'} = 1$ instead of $t_0=1$. Then, the only part on the parameterization of $v_d$ which contributes $\TT_{v_d(x'')}v_d(\Pn)$ is 
  \begin{gather*}
   \ t_0 \cdot \mo[u]{d-1},\ \dots,\ t_{m} \cdot \mo[u]{d-1}, \mo[u]{d}
  \end{gather*}
which corresponds to the tailing ``$*$'' part in (\ref{eq:coordi-t0=1-tu}) (recall that $\mo[u]{e}$ is the set of monomials of $\CC[u_1, \dots, u_{m'}]$ of degree $\le e$). Thus, a similar matrix whose rows span the other tangent space $\TT_{v_d(x'')}v_d(\Pn)$ has a form $[\mathsf{O}:\ast]$. This implies $L_1 \cap L_2 = \emptyset$.
 
  Hence $\dim \lin{L_1, L_2} > (k+r)n+(k+r) -1$, which is contrary to $\lin{L_1, L_2}  \subset \TT_{x} \sigma_{k+r}(v_d(\Pn))$.
\end{proof}

\begin{remark}[Partial subsecant locus]\label{p_subsecant}
This \ti{new} singular locus $J(\sigma_k(v_d(\Pm)), \sigma_r(v_d(\Pn)))$ in (\ref{skvdPm-srvdPn-in-Sing}) can be seen as a `partial version' of subsecant locus in this paper. In particular, it contains the $m$-subsecant variety $\sigma_{k+r}(v_d(\Pm))=J(\sigma_k(v_d(\Pm)), \sigma_r(v_d(\Pm)))$. So, let us call such a locus a \ti{partial subsecant locus} of $\sigma_{k+r}(v_d(\Pn))$. We note that the singularity of a specific form $f=x^2 y^2+z^4$ in Example \ref{sing_special_pt} can be explained using this notion; $f$ is a point of $\Sigma_{4,4}(2;\P^3)$ where only a generic smoothness is known by Theorems \ref{general_m2} (i), but $f$ also belongs to a partial subsecant locus $J(\sigma_3(v_4(\P^1)), \sigma_1(v_4(\P^3)))$ which is singular by Theorem~\ref{Join_s_k_and_s_r}.
\end{remark}

Therefore, one proper question on the singular locus of $\sigma_{k}(v_d(\Pn))$ here is probably such as
\begin{question}\label{question_smooth_gen}
Let $k-1\le n$ and $\mathcal{D}$ be the union of all possible (partial) subsecant loci of $\sigma_{k}(v_d(\Pn))$. Are the points of $\sigma_k(v_d(\PP^{n})) \setminus\big(\mathcal{D} \cup\sigma_{k-1}(v_d(\Pn))\big)$ all smooth in $\sigma_{k}(v_d(\Pn))$?
\end{question}
Note that the answer to Question \ref{question_smooth_gen} is affirmative in cases of $k=2$ classically and $k=3$ (by \cite{H18}) and $k=4$ (by Theorem~\ref{sing_4th_Vero_gen}). For a large value $k$ compared to $n$ (e.g. $n< k-1$), Question \ref{question_smooth_gen} may be answered negatively as in the following example.

\begin{example} Let us consider $\sigma_{14}(v_8(\PP^2))$,
  the $14$-th secant variety of the Veronese variety $v_{8}(\PP^2)$.
  Take general 14 points on $v_{8}(\PP^2)$.
  In \cite[remark 4.10]{AC19}, the authors presented a concrete point in the linear span of the 14 points which is a \ti{non-normal} point to $\sigma_{14}(v_8(\PP^2))$. Note that one can also check this singular point does not belong to $\mathcal{D}$, the locus of all partial subsecants. 
\end{example}

\begin{remark}\label{remk_for_next_project}
Finally, we would like to remark that the approach based on the same spirit of trichotomy pattern of (non-)singularity on subsecant loci still can be applied to the study of singular loci of higher secant varieties of other classical varieties such as Segre embeddings, Segre-Veronese varieties and Grassmannians. For instance, we can have a conjectural result like

\noindent\textit{`` For $\vec{\mbf{n}}=(n_1,n_2,\ldots,n_r)$, let $X$ be the Segre embedding $\P^{n_1}\times\P^{n_2}\times\cdots\times\P^{n_r} \subset\P^{\prod(n_i+1)-1}=:\P^{\beta(\vec{\mbf{n}})}$ and denote $\sigma_k(X)$ by $\sigma_k(\vec{\mbf{n}})$, the expected dimension of $\sigma_k(\vec{\mbf{n}})$ by $s_k(\vec{\mbf{n}})$. For every $\vec{\mbf{m}}=(m_1,m_2,\ldots,m_r)$ with $0\le m_i\le \min\{k-1,n_i-1\}$, besides a few exceptional cases, we have that the following holds 
\begin{enumerate}
\item[(i)] $\sigma_k(\vec{\mbf{n}})$ is smooth at a general point in $\sigma_k(\vec{\mbf{m}}) \setminus\sigma_{k-1}(\vec{\mbf{n}})$ if $\beta(\vec{\mbf{m}})\ge s_k(\vec{\mbf{m}})$
\item[(ii)] $\sigma_k(\vec{\mbf{m}})$ is singular in $\sigma_k(\vec{\mbf{n}})$, but $\sigma_k(\vec{\mbf{m}}) \not\subset\sigma_{k-1}(\vec{\mbf{n}})$ (i.e., non-trivial singular locus) if $s_{k-1}(\vec{\mbf{m}})<\beta(\vec{\mbf{m}})<s_k(\vec{\mbf{m}})$
\item[(iii)] $\sigma_k(\vec{\mbf{m}}) \subset\sigma_{k-1}(\vec{\mbf{n}})$ if $\beta(\vec{\mbf{m}})\le s_{k-1}(\vec{\mbf{m}})$ ''\,,
\end{enumerate}}
\noindent which can recover the result on the singular locus of the secant varieties of Segre embeddings (\cite[Corollary 7.17]{MOZ}) for $k=2$.
Note that, if we assume that everything is non-defective, then the ranges above can be computed as
\begin{align*}
\trm{(i)}\Leftrightarrow &\quad k\le\frac{\prod^{r}_{i=1}(m_i+1)}{\sum^{r}_{i=1}(m_i+1)-(r-1)}\\
\trm{(ii)}\Leftrightarrow &\quad \frac{\prod^{r}_{i=1}(m_i+1)}{\sum^{r}_{i=1}(m_i+1)-(r-1)}< k <\frac{\prod^{r}_{i=1}(m_i+1)}{\sum^{r}_{i=1}(m_i+1)-(r-1)}+1\\
\trm{(iii)}\Leftrightarrow &\quad k \ge\frac{\prod^{r}_{i=1}(m_i+1)}{\sum^{r}_{i=1}(m_i+1)-(r-1)}+1.
\end{align*}
We plan to deal with these cases in a forthcoming paper.
\end{remark}

\renewcommand{\MR}[1]{}

\end{document}